\theoremstyle{plain}
\newtheorem{theorem}{Theorem}[section]
\newtheorem{conjecture}[theorem]{Conjecture}
\newtheorem{corollary}[theorem]{Corollary}
\newtheorem{lemma}[theorem]{Lemma}
\newtheorem{proposition}[theorem]{Proposition}
\theoremstyle{definition}
\newtheorem{definition}[theorem]{Definition}
\newtheorem{note}[theorem]{Note}
\theoremstyle{remark}
\newtheorem{example}[theorem]{Example}
\newtheorem{remark}[theorem]{Remark}
\DeclareMathOperator{\NN}{\mathbb{N}}
\DeclareMathOperator{\ZZ}{\mathbb{Z}}
\DeclareMathOperator{\RR}{\mathbb{R}}
\DeclareMathOperator{\CC}{\mathbb{C}}
\DeclareMathOperator{\PP}{\mathbb{P}}
\DeclareMathOperator{\cdeg}{cdeg}
\DeclareMathOperator{\comaj}{comaj}
\DeclareMathOperator{\defining}{def}
\DeclareMathOperator{\des}{des}
\DeclareMathOperator{\fdes}{fdes}
\DeclareMathOperator{\fixed}{fixed}
\DeclareMathOperator{\fmaj}{fmaj}
\DeclareMathOperator{\Frob}{Frob}
\DeclareMathOperator{\fxd}{fxd}
\DeclareMathOperator{\GL}{GL}
\DeclareMathOperator{\maj}{maj}
\DeclareMathOperator{\ndes}{ndes}
\DeclareMathOperator{\nmaj}{nmaj}
\DeclareMathOperator{\pdeg}{pdeg}
\DeclareMathOperator{\Proj}{Proj}
\DeclareMathOperator{\RHT}{RHT}
\DeclareMathOperator{\RST}{RST}
\DeclareMathOperator{\sign}{sign}
\DeclareMathOperator{\SSYT}{SSYT}
\DeclareMathOperator{\stand}{stand}
\DeclareMathOperator{\Symm}{Symm}
\DeclareMathOperator{\SYT}{SYT}
\DeclareMathOperator{\tr}{tr}
\DeclareMathOperator{\triv}{triv}
\DeclareMathOperator{\weight}{weight}
\DeclareMathOperator{\wcomaj}{wcomaj}
\DeclareMathOperator{\wdes}{wdes}
\DeclareMathOperator{\wDes}{Des}
\newcommand{\qbinom}{\genfrac{[}{]}{0pt}{}}
\def\@tocline#1#2#3#4#5#6#7{\relax
  \ifnum #1>\c@tocdepth 
  \else
    \par \addpenalty\@secpenalty\addvspace{#2}%
    \begingroup \hyphenpenalty\@M
    \@ifempty{#4}{%
      \@tempdima\csname r@tocindent\number#1\endcsname\relax
    }{%
      \@tempdima#4\relax
    }%
    \parindent\z@ \leftskip#3\relax \advance\leftskip\@tempdima\relax
    \rightskip\@pnumwidth plus4em \parfillskip-\@pnumwidth
    #5\leavevmode\hskip-\@tempdima
      \ifcase #1
       \or\or \hskip 1em \or \hskip 2em \else \hskip 3em \fi%
      #6\nobreak\relax
    \dotfill\hbox to\@pnumwidth{\@tocpagenum{#7}}\par
    \nobreak
    \endgroup
  \fi}
\newcommand\partitionfr[1]{
	\coordinate (prev) at (0,0);
	\foreach \dir in {#1}{
		\draw[color = black!60, thick] (prev) -- +(0,1) coordinate (prev);
		\draw[color = black!60, thick] (prev)+(0,-1) grid +(\dir,0);
	};
}
\begin{document}

\title[Color rules for cyclic wreath products and semigroup algebras]{Color rules for cyclic wreath products and semigroup algebras from projective toric varieties}
\author{Fabián Levicán-Santibáñez}
\author{Marino Romero}
\address{Fakultät für Mathematik, Universität Wien, Austria}
\email{fabian.levican@univie.ac.at}
\email{marino.romero@univie.ac.at}
\date{August 5, 2025}

\begin{abstract}
We introduce the notion of ``color rules'' for computing class functions of $Z_k \wr S_n$, where $Z_k$ is the cyclic group of order $k$ and $S_n$ is the symmetric group on $n$ letters.  
Using a general sign-reversing involution and a map of order $k$, we give a combinatorial proof that the irreducible decomposition of these class functions is given by a weighted sum over semistandard tableaux in the colors.
Since using two colors at once is also a color rule, we are consequently able to decompose arbitrary tensor products of representations whose characters can be computed via color rules. This method extends to class functions of $G\wr S_n$ where $G$ is a finite abelian group. 
We give a number of applications, including decomposing tensor powers of the defining representation, along with a combinatorial proof of the Murnaghan--Nakayama rule for $Z_k \wr S_n$.

Our main application is to the study of the linear action of $Z_k \wr S_n$ on bigraded affine semigroup algebras arising from the product of projective toric varieties.
In the case of the product of projective spaces, our methods give the decomposition of these bigraded characters into irreducible characters, thus deriving equivariant generalizations of Euler-Mahonian identities.
\end{abstract}

\maketitle

\tableofcontents

\section{Introduction}

We will present a general method for computing irreducible decompositions of class functions of $G \wr S_n$ for when $G$ is an abelian group and $S_n$ is the symmetric group on $n$ letters (though we will focus on the case when $G = Z_k$ is the cyclic group of order $k$, from which the more general case follows). Our proof is purely combinatorial and noteworthy for a number of reasons. \\

First, the family of class functions we will be interested in are attained by ``coloring'' cycles according to some ``color rule'' (see Section~\ref{section:color-rules}), and any character can be written as a sum of these types of class functions (see Subsection~\ref{subsection:basis}). 
Given such a class function, we will show that the multiplicities of irreducible characters can be enumerated by weighted sums over certain semistandard tableaux in the colors. This will be our main result, Theorem \ref{theorem:mainresult}. 

There are a number of consequences from this perspective. For one, tensor products of representations whose characters are computed via color rules can be immediately decomposed into irreducible representations, since, conveniently, having two simultaneous coloring rules is itself a single coloring rule! This is explained by Proposition~\ref{proposition:products_of_colors}.
As a consequence, we get a large number of applications, including the decomposition of tensor powers of the defining representation of $Z_k \wr S_n$.
Furthermore, the class functions are not necessarily characters of a representation and can include weights.
For example,
in Subsection~\ref{section:murnaghan-nakayama}, we  get a combinatorial proof for the Murnaghan-Nakayama rule for $Z_k \wr S_n$ written in terms of multi-Schur functions.
These methods also restrict to consequences for the symmetric group, as we will show in Subsection~\ref{subsection:symmetric_group}.\\ 

Second, our proof uses a remarkably general weight-preserving, sign-reversing involution method.
The core idea behind this method stems from a special case of our setting, found in \cite{MendesRomero}, where a sign-reversing involution is used to decompose tensor powers of the defining representation of $S_n$. This idea was also later used in \cite{Mendes} to give a combinatorial proof of the Murnaghan-Nakayama rule for $S_n$. As previously mentioned, our framework specializes to these cases. Most importantly, our point of view gives a unified combinatorial framework for understanding arbitrary products of these types of class functions. \\

A key application of our methods is to the study of certain bigraded group characters arising from actions on affine semigroup algebras, $K[P^{\times n}]$, coming from the product of projective toric varieties. We will study a bigrading introduced by Chapoton in \cite{chapoton-analogues}, which is similar to (but distinct from) the one introduced by Reiner and Rhoades in \cite{reiner-rhoades-harmonics}. Then, we will endow $K[P^{\times n}]$ with the structure of a bigraded (or multigraded) $Z_k \wr S_n$-module. In particular, we will compute the character of $K[P^{\times n}]$ and give a combinatorial rule for computing its decomposition into irreducible representations (see Theorem \ref{thm:projectivecharacter}). 
For $k=1$, we will also describe a link to Stapledon's \emph{equivariant Ehrhart theory} \cite{stapledon-equivariant} by introducing a new refinement using this bigrading.

An interesting case is when $P= \Delta_1 = [0,1]$ is the unit interval. A regular sequence in $K[(\Delta_1)^{\times n}]$ is given in \cite{adeyemo-szendroi-refined} (for $k=1$) and \cite{braun-olsen-statistics} (for any $k$). In \cite{braun-olsen-statistics}, they interpret the Hilbert series of quotients of $K[(\Delta_1)^{\times n}]$ by these regular sequences as \emph{Euler-Mahonian identities}, which have appeared in various contexts within algebraic combinatorics, as discussed after their Theorem 1.1. 

As a special case of our work, we will study the products of unit $m$-simplices $P = \Delta_m$, corresponding geometrically to the product of projective spaces with the Segre embedding. We will give a regular sequence in $K[(\Delta_m)^{\times n}]$ for $k=1$, conjecture a regular sequence for every $k$, and therefore give an expression for the bigraded character of the quotients of $K[(\Delta_m)^{\times n}]$ by these regular sequences.
In particular, for $m=1$ and any $k$ and $n$, our methods give an equivariant generalization of these aforementioned Euler-Mahonian identities (see Table~\ref{tab:equivariant-euler-mahonian-identities}). 
\\

Our work is organized in the following way. Section \ref{section:Preliminaries} introduces the main combinatorial structures, along with the representation theory of $Z_k \wr S_n$. Section \ref{section:color-rules} will introduce the definition of a ``color rule'' and state the main combinatorial result. In Section~\ref{section:some_examples}, we give a list of examples and applications from this point of view.
This will be followed by Section~\ref{section:semigroup-algebras}, where we introduce affine semigroup algebras and projective toric varieties, describe the action of $Z_k \wr S_n$, and state our results about the product of projective spaces. We also discuss links to equivariant Ehrhart theory and to Euler-Mahonian identities.
Lastly, Section \ref{section:main_proof} gives a proof of our main theorem regarding color rules.

\section{Preliminaries} \label{section:Preliminaries}

\subsection{Partitions}

A partition $\lambda= (\lambda_1 , \cdots ,\lambda_{\ell}) \vdash n$ is a sequence of non-increasing positive integers whose total sum is $|\lambda|= n$ and length is $\ell(\lambda) = \ell$. We represent partitions by its diagram in the French convention, see Figure~\ref{fig:partitions}. 

We will write $\vec{\lambda} = (\lambda^0, \lambda^1, \dots, \lambda^{k-1}) \vdash_k n$ if $\vec{\lambda}$ is a sequence of $k$ partitions such that $|\lambda^0| + |\lambda^1| +\cdots + |\lambda^{k-1}| = n$. Note that some of these partitions may be empty. We give this sequence a Young diagram interpretation by drawing successive partitions corner to corner. More specifically, the south-east corner of $\lambda^i$ is connected to the north-west corner of $\lambda^{i+1}$. If $\lambda^i$ is an empty partition, we let $\lambda^i$ be a cell with $\emptyset$ written within, and we proceed with the concatenation. Similarly to ordinary integer partitions, we denote its size by $|\vec{\lambda}| = n$.
For example, the sequence $\vec{\lambda} = ((3,1),(\emptyset),(2,2))$ depicted on the right-hand side of Figure~\ref{fig:partitions} has size $8$. It is important that we leave the empty square empty, and any interactions with cells of $\vec{\lambda}$ should avoid this square. 
\\

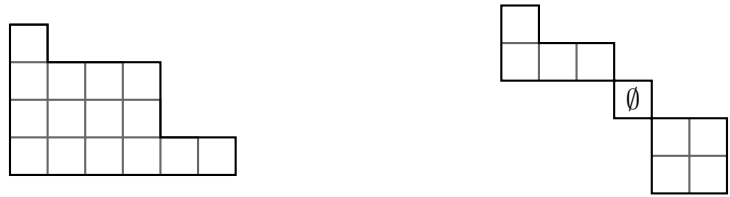
\begin{figure}[h]
\begin{align*}\vcenter{\hbox{
\begin{tikzpicture}[scale = 1/2]
\draw [color=black!60, thick] (5,0) -- (5,1)-- (0,1);
\draw [color=black!60, thick] (4,0) -- (4,2)-- (0,2);
\draw [color=black!60, thick] (3,0) -- (3,3)-- (0,3);
\draw [color=black!60, thick] (2,0) -- (2,3);
\draw [color=black!60, thick] (1,0) -- (1,4)-- (0,4);
\draw [color=black, thick] (0,0) -- (6,0)  -- (6,1) -- (4,1) -- (4,3) -- ( 1,3) -- (1,4) -- (0,4) -- cycle;
\end{tikzpicture}}}
&&\vcenter{\hbox{
 \begin{tikzpicture}[scale = 1/2, baseline = 10ex]
 \draw [color = black!60, thick] (0,4) -- (1,4);
\draw [color = black!60, thick] (1,3) -- (1,4);
\draw [color = black!60, thick] (2,3) -- (2,4);
\draw [color = black!60, thick] (4,1) -- (6,1);
\draw [color = black!60, thick] (5,0) -- (5,2);
\node at (3.5,2.5) {$\emptyset$};
\draw [black, thick] (0,3) -- (3,3)  -- (3,2)  -- (4,2) -- (4, 0) -- (6,0) -- (6,2) --(4,2) -- (4,3) -- (3,3) -- (3,4) -- (1,4) -- (1,5) -- (0,5) -- cycle;
\end{tikzpicture}}}
\end{align*}
\caption{The partition $\lambda= (6,4,4,2)$ is drawn on the left; and on the right is the sequence of partitions $\vec{\lambda} = ((3,1),\emptyset,(2,2)) \vdash_3 8$.}
\label{fig:partitions}
\end{figure}

Suppose we are given a multiset $F=[f_1^{m_1}, f_2^{m_2},\dots]$, and suppose that we fix an order on the elements of $F$, say $f_1<f_2<\cdots$ (the choice generally does not matter). A semistandard tableau $T\in \SSYT(\vec{\gamma},F)$ of shape $\vec{\gamma}$ and content $F$ is a filling of the cells of $\vec{\gamma}$ so that every cell gets one element from $F$; $f_i$ appears at most $m_i$ times; the rows are weakly increasing when read from left to right; and columns are strictly increasing from bottom to top. See Figure~\ref{fig:SSYT_and_SYT}.

The set of standard Young tableaux of shape $\vec{\gamma}$, $\SYT(\vec{\gamma}) \coloneq \SSYT(\vec{\gamma},[1,2,\dots,|\vec{\gamma}|])$, consists of semistandard Young tableaux with entries in $\{1,\dots, |\vec{\gamma}|\}$ so that each label appears exactly once. 
Every element $T \in \SSYT(\vec{\gamma},F)$ can be ``standardized'' to an element $\stand(T) \in \SYT(\vec{\gamma})$ where we replace the entries by their index in the reading order. That is, if there are $a_i$ cells labeled $f_i$, then replace the $a_1$ cells containing $f_1$ by $1,\dots, a_1$, increasing from left to right, replace the $a_2$ cells containing $f_2$ with $a_1+1,\dots, a_1+a_2$, increasing from left to right, and so on. For an example, see Figure~\ref{fig:SSYT_and_SYT}.

\begin{figure}
\begin{align*}
\vcenter{\hbox{
 \begin{tikzpicture}[scale = 1/2, baseline = 10ex]
 \draw [color = black!60, thick] (0,4) -- (1,4);
\draw [color = black!60, thick] (1,3) -- (1,4);
\draw [color = black!60, thick] (2,3) -- (2,4);
\draw [color = black!60, thick] (4,1) -- (6,1);
\draw [color = black!60, thick] (5,0) -- (5,2);
\draw [black, thick] (0,3) -- (3,3)  -- (3,2)  -- (4,2) -- (4, 0) -- (6,0) -- (6,2) --(4,2) -- (4,3) -- (3,3) -- (3,4) -- (1,4) -- (1,5) -- (0,5) -- cycle;
\node at (3.5,2.5) {$\emptyset$};
\node at (-1.5,2.5) {$T=$};
\node at (.5,3.5) {$f_1$};
\node at (1.5,3.5) {$f_1$};
\node at (4.5,.5) {$f_1$};
\node at (4.5,1.5) {$f_2$};
\node at (.5,4.5) {$f_3$};
\node at (2.5,3.5) {$f_3$};
\node at (5.5,.5) {$f_3$};
\node at (5.5,1.5) {$f_4$};
\end{tikzpicture}}}
&&
\vcenter{\hbox{
 \begin{tikzpicture}[scale = 1/2, baseline = 10ex]
 \draw [color = black!60, thick] (0,4) -- (1,4);
\draw [color = black!60, thick] (1,3) -- (1,4);
\draw [color = black!60, thick] (2,3) -- (2,4);
\draw [color = black!60, thick] (4,1) -- (6,1);
\draw [color = black!60, thick] (5,0) -- (5,2);
\draw [black, thick] (0,3) -- (3,3)  -- (3,2)  -- (4,2) -- (4, 0) -- (6,0) -- (6,2) --(4,2) -- (4,3) -- (3,3) -- (3,4) -- (1,4) -- (1,5) -- (0,5) -- cycle;
\node at (3.5,2.5) {$\emptyset$};
\node at (-2.5,2.5) {$\stand(T)=$};
\node at (.5,3.5) {$1$};
\node at (1.5,3.5) {$2$};
\node at (4.5,.5) {$3$};
\node at (4.5,1.5) {$4$};
\node at (.5,4.5) {$5$};
\node at (2.5,3.5) {$6$};
\node at (5.5,.5) {$7$};
\node at (5.5,1.5) {$8$};
\end{tikzpicture}}}
\end{align*}
\caption{On the left, we have a semistandard tableau $T \in \SSYT( ((3,1),\emptyset,(2,2)), [f_1^3,f_2,f_3^6,f_4])$. On the right, we have its standardization $\stand(T) \in \SYT(((3,1),\emptyset,(2,2)))$.
}
\label{fig:SSYT_and_SYT}
\end{figure}
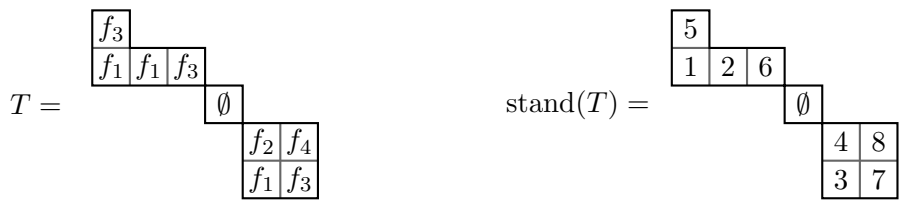

\subsection{Wreath products}
For references regarding wreath products with the symmetric group and connections to symmetric functions, see \cite{MRW} and \cite{Macdonald}. \\

For the remainder of the paper, let $n$ and $k$ be fixed positive integers. Let $Z_k$ be the cyclic group consisting of the $k^{\text{th}}$ roots of unity $\{u_0,u_1,\dots,u_{k-1}\}$, generated by $u_1$ (so that $u_r = u_1^r$); and let $S_n$ be the symmetric group on the letters $\{1,2,\dots, n\}$. \\

Let $Z_k \wr S_n$ denote the group formed by the set of $n$ by $n$ matrices whose rows and columns have only one nonzero entry coming from $Z_k$. Since this group is introduced in matrix form, the elements $\sigma$ of $Z_k \wr S_n$ are given a natural ``defining'' representation, which is the matrix $\Pi_{\defining}(\sigma)$ from their definition.
For instance, one such matrix in $Z_4 \wr S_6$ is given by
\begin{equation*} \Pi_{\defining}(\tau) = 
 \begin{pmatrix}
 u_0 & 0 & 0 & 0 & 0 & 0 \\
 0 & 0 & u_3 & 0 & 0 & 0 \\
 0 & 0 & 0 & 0 & u_0 & 0 \\
 0 & 0 & 0 & u_1 & 0 & 0 \\
 0 & u_2 & 0 & 0 & 0 & 0 \\
 0 & 0 & 0 & 0 & 0 & u_3 \\  
 \end{pmatrix}.
\end{equation*}

 We may also consider the elements in $Z_k \wr S_n$ as weighted permutations. That is $\sigma \in Z_k \wr S_n$ can be written in two-line notation as
 \begin{equation*}
 \sigma = \left(\begin{array}{@{\,}c@{\ }c@{\ }c@{\ }c@{\ }}
1&2&\cdots&n\\
u_{a_1} \sigma_1&u_{a_2}\sigma_2&\cdots&u_{a_n}\sigma_n
\end{array}\right),
\end{equation*}
where $u(\sigma, \sigma_i) \coloneq u_{a_i}$ is some $k^{\text{th}}$ root of unity and $\sigma_1 \cdots \sigma_n$ is a permutation in $S_n$. We will then say that $i$ goes to $u_{a_i} \sigma_i$.
We will often refer to the numbers $1,\dots,n$ appearing in $\sigma_1\cdots \sigma_n$ as indices, so that the index $i$ appears with the root of unity $u(\sigma,i).$
In terms of the representation $\Pi_{\defining}(\sigma)$, $u(\sigma,i)$ is the root of unity in row $i$.
\\

We will use a particular cycle notation. Given $\sigma \in Z_k \wr S_n$, we can write $\sigma$ uniquely as a product of disjoint cycles in the following way:  $u(\sigma, i)~ i$ precedes $u(\sigma,\sigma_i)~ \sigma_i$ in a cycle; every cycle begins with the smallest index $i$; and arrange the cycles so that they are decreasing from left to right according to the smallest index in each cycle. 
For example, the element $\tau$ above can be written as
 \begin{equation*}
 \tau = 
 \left(\begin{array}{@{\,}c@{\ }c@{\ }c@{\ }c@{\ }c@{\ }c@{\ }}
1&2&3&4&5&6 \\
u_0 1 & u_2 5 & u_3 2& u_1 4 & u_0 3 & u_3 6
\end{array}\right) = (u_3 6)(u_1 4)(u_3 2, u_2 5 , u_0 3)(u_0 1) \in Z_4 \wr S_6.
 \end{equation*}
Because we are writing the first indices of each cycle in decreasing order, we will call this unique arrangement the decreasing cycle notation of $\sigma$. 
\begin{remark}\label{remark:decreasing_cycle_bijection}
Given $\sigma$ in decreasing cycle notation, one can erase the parentheses and get a bijection from $Z_k \wr S_n$ to itself, where the above permutation in cycle notation would be sent to $ u_3 6~u_1 4~u_3 2~u_2 5 ~u_0 3~u_0 1$, written in one-line notation. One gets the original cycle notation by taking a left-to-right minimum as the location of a new cycle.
\end{remark}

Given a cycle $c = (u_{a_1} i_1, \dots, u_{a_m} i_m)$, we say that $c$ is a $C_{j}$-cycle if $u_{a_1} \cdots u_{a_m} = u_j$. This $u_j$ will also be called the C-type of the cycle $c$. 
We may also say that $c$ is an $m$-cycle, in reference to its length; so $1$-cycles are cycles of length $1$, whereas $C_0$-cycles are cycles of C-type $u_0=1$.
The conjugacy classes are indexed by sequences of partitions $\vec{\lambda} \vdash_k n$:
\begin{equation*}
C_{\vec{\lambda}} = \{\sigma \in C_k \wr S_n :
 \text{the $C_j$-cycles in $\sigma$ are of lengths $\lambda^j_1, \dots, \lambda^j_{\ell(\lambda^j)}$ for } j=0, \dots, k-1 \}.
\end{equation*}

From the previous example $\tau \in Z_4 \wr S_6$, since $u_3 = u_3$, $u_1 = u_1$, $u_3 u_2 u_0 = u_1$, and $u_0 = u_0$, 
\begin{align*}
(u_0 1) && \text{is a $C_{0}$-cycle}, \\
(u_3 2, u_2 5 , u_0 3) \ \text{ and } \ (u_1 4) && \text{are $C_{1}$-cycles, and }\\
(u_3 6) && \text{is a $C_{3}$-cycle}.
\end{align*}
Therefore, $\tau$ is in the conjugacy class described by the sequence of partitions \[
((1),(3,1),(\emptyset),(1)) \vdash_4 6.
\]
\subsection{Irreducible characters}
The irreducible characters of $Z_k$ are given by the functions $\{\Psi^j : 0 \leq j < k\}$ defined by setting $\Psi^j(u)= u^j$. In particular, these characters are themselves representations and multiplicative: $\Psi^j(uv) = \Psi^j(u) \Psi^j(v)$. This will be the property we need to prove Theorem \ref{theorem:mainresult}, which is why we are able to generalize the main result to wreath products with abelian groups. \\

For each $\vec{\gamma} \vdash_k n$ there is a corresponding irreducible character of $Z_k \wr S_n$, $\chi^{\vec{\gamma}}$, whose value at $\sigma$ can be computed by a Murnaghan-Nakayama rule, as shown in \cite{MRW}. This is what we now describe. \\

For a partition $\lambda$, a \emph{rim hook} is a contiguous path along the cells of the north-east boundary of $\lambda$ consisting of East and South steps whose removal leaves a partition. 
We insert the rim hook by drawing a line segment through the sequence of cells. And for a rim hook $\zeta$, we define
\begin{equation*}
\sign(\zeta) \coloneq (-1)^{\text{number of South steps in $\zeta$}}.
\end{equation*}
For example, Figure~\ref{fig:rimhooks} depicts a rim hook on the left; and the second image is not a rim hook since the removal of the sequence of cells does not leave a partition. 
Since the first shape has a rim hook that traverses $2$ horizontal lines, its sign is given by $(-1)(-1) = +1$. \\

\begin{figure}
\begin{align*}
& \begin{tikzpicture}[scale = 1/2]
\draw [color=black!60, thick] (5,0) -- (5,1)-- (0,1);
\draw [color=black!60, thick] (4,0) -- (4,2)-- (0,2);
\draw [color=black!60, thick] (3,0) -- (3,3)-- (0,3);
\draw [color=black!60, thick] (2,0) -- (2,3);
\draw [color=black!60, thick] (1,0) -- (1,4)-- (0,4);
\draw [color=black!75, thick] (1.5,2.5) -- (3.5,2.5)-- (3.5,.5) -- (5.5,.5);
\draw [color=black, thick] (0,0) -- (6,0)  -- (6,1) -- (4,1) -- (4,3) -- ( 1,3) -- (1,4) -- (0,4) -- cycle;
\fill[color = black, thick] (1.5,2.5) circle (.5ex);
\fill[color = black, thick] (5.5,.5) circle (.5ex);
\end{tikzpicture}
& \begin{tikzpicture}[scale = 1/2]
\draw [color=black!60, thick] (5,0) -- (5,1)-- (0,1);
\draw [color=black!60, thick] (4,0) -- (4,2)-- (0,2);
\draw [color=black!60, thick] (3,0) -- (3,3)-- (0,3);
\draw [color=black!60, thick] (2,0) -- (2,3);
\draw [color=black!60, thick] (1,0) -- (1,4)-- (0,4);
\draw [color=black!75, thick] (1.5,2.5) -- (3.5,2.5)-- (3.5,.5);
\draw [color=black, thick] (0,0) -- (6,0)  -- (6,1) -- (4,1) -- (4,3) -- ( 1,3) -- (1,4) -- (0,4) -- cycle;
\fill[color = black, thick] (1.5,2.5) circle (.5ex);
\fill[color = black, thick] (3.5,.5) circle (.5ex);
\end{tikzpicture}
\end{align*}
\caption{The path depicted on the left is a rim hook with positive sign, while the one on the right-hand side is not.}
\label{fig:rimhooks}
\end{figure}
 
Let $c = (c_1,\dots, c_m)$ be a composition of $n$, and let $\vec{\lambda}\vdash_k n$. Also, let $a=(a_1, \dots, a_m)$ be a sequence of elements in $Z_k$. A \emph{rim hook tableau} $T\in \RHT_{\vec{\lambda},(c,a)}$ of shape $\vec{\lambda}$ and type $(c,a)$ is created by 
placing a rim hook $\zeta_1$ in $\vec{\lambda}$ of length $c_1$, placing a rim hook $\zeta_2$ in 
$\vec{\lambda} \setminus \zeta_1$ of length $c_2$, placing a rim hook $\zeta_3$ in 
$\vec{\lambda} \setminus \zeta_1 \setminus \zeta_2$ of length $c_3$, and so on. We then assign each rim hook a weight $w$ given by 
\begin{align*}
w(\zeta_i)&\coloneq \Psi^{j}(a_i) \sign(\zeta_i),
\end{align*}
where $\zeta_i$ was inserted in partition $\lambda^j$, and set 
\begin{align*}
w(T)&\coloneq \prod_{i=1}^{\ell(c)} w(\zeta_i).
\end{align*}

As an example for when $k=2$ and $n=8$, Figure~\ref{fig:rimhooktableaux} lists all rim hook tableaux
of shape $((3,1),(2,2))$ and type $(1,3,1,3),(1,-1,-1,1)$, where the rim hooks are numbered to describe the order in which they were placed.
The figure is displayed so that the tableaux of weight $+1$ are in the first row, and the tableaux with weight $-1$ are in the second row. 
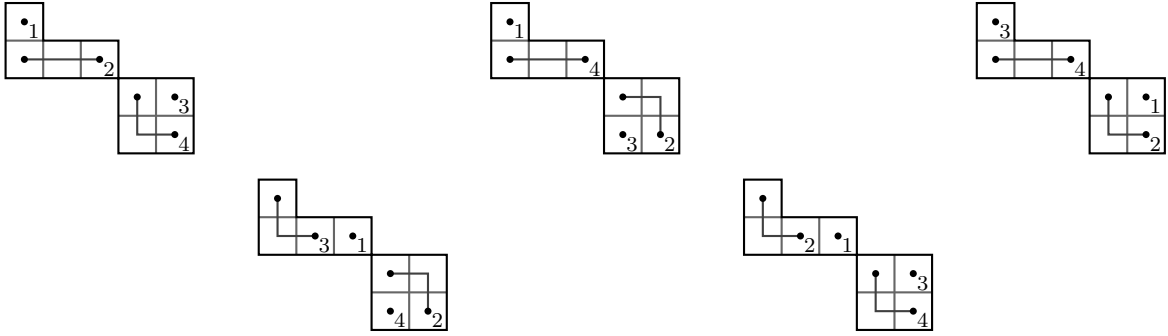
\begin{figure}
 \begin{align*}
 \begin{tikzpicture}[scale = 1/2, baseline = 10ex]
\draw [color = black!60, thick] (4,0) -- (4,2);
\draw [color = black!60, thick] (3,1) -- (5,1);
\draw [color = black!60, thick] (2,2) -- (2,3);
\draw [color = black!60, thick] (1,2) -- (1,3);
\draw [color = black!60, thick] (0,3) -- (1,3);
\draw[color=black!75, thick] (.5, 2.5) -- (2.5, 2.5);
\fill[color = black, thick] (.5,3.5) circle (.5ex);
\fill[color = black, thick] (.5,2.5) circle (.5ex);
\fill[color = black, thick] (2.5,2.5) circle (.5ex);
\draw[color=black!75, thick] (3.5, 1.5) -- (3.5, .5) -- (4.5,.5);
\fill[color = black, thick] (3.5,1.5) circle (.5ex);
\fill[color = black, thick] (4.5,.5) circle (.5ex);
\fill[color = black, thick] (4.5,1.5) circle (.5ex);
\node at (.75,3.25) {\scriptsize{$1$}};
\node at (2.75,2.25) {\scriptsize{$2$}};
\node at (4.75,1.25) {\scriptsize{$3$}};
\node at (4.75,.25) {\scriptsize{$4$}};
\draw [black, thick] (0,2) -- (5,2)  -- (5,0) -- (3,0) -- (3, 3) -- (1,3) -- (1,4) --(0,4) -- cycle;
\end{tikzpicture}
&& \begin{tikzpicture}[scale = 1/2, baseline = 10ex]
\draw [color = black!60, thick] (4,0) -- (4,2);
\draw [color = black!60, thick] (3,1) -- (5,1);
\draw [color = black!60, thick] (2,2) -- (2,3);
\draw [color = black!60, thick] (1,2) -- (1,3);
\draw [color = black!60, thick] (0,3) -- (1,3);
\draw[color=black!75, thick] (.5, 2.5) -- (2.5, 2.5);
\fill[color = black, thick] (.5,3.5) circle (.5ex);
\fill[color = black, thick] (.5,2.5) circle (.5ex);
\fill[color = black, thick] (2.5,2.5) circle (.5ex);
\draw[color=black!75, thick] (3.5, 1.5) -- (4.5, 1.5) -- (4.5,.5);
\fill[color = black, thick] (3.5,1.5) circle (.5ex);
\fill[color = black, thick] (4.5,.5) circle (.5ex);
\fill[color = black, thick] (3.5,.5) circle (.5ex);
\node at (.75,3.25) {\scriptsize{$1$}};
\node at (2.75,2.25) {\scriptsize{$4$}};
\node at (3.75,.25) {\scriptsize{$3$}};
\node at (4.75,.25) {\scriptsize{$2$}};
\draw [black, thick] (0,2) -- (5,2)  -- (5,0) -- (3,0) -- (3, 3) -- (1,3) -- (1,4) --(0,4) -- cycle;
\end{tikzpicture}
&&  \begin{tikzpicture}[scale = 1/2, baseline = 10ex]
\draw [color = black!60, thick] (4,0) -- (4,2);
\draw [color = black!60, thick] (3,1) -- (5,1);
\draw [color = black!60, thick] (2,2) -- (2,3);
\draw [color = black!60, thick] (1,2) -- (1,3);
\draw [color = black!60, thick] (0,3) -- (1,3);
\draw[color=black!75, thick] (.5, 2.5) -- (2.5, 2.5);
\fill[color = black, thick] (.5,3.5) circle (.5ex);
\fill[color = black, thick] (.5,2.5) circle (.5ex);
\fill[color = black, thick] (2.5,2.5) circle (.5ex);
\draw[color=black!75, thick] (3.5, 1.5) -- (3.5, .5) -- (4.5,.5);
\fill[color = black, thick] (3.5,1.5) circle (.5ex);
\fill[color = black, thick] (4.5,.5) circle (.5ex);
\fill[color = black, thick] (4.5,1.5) circle (.5ex);
\node at (.75,3.25) {\scriptsize{$3$}};
\node at (2.75,2.25) {\scriptsize{$4$}};
\node at (4.75,1.25) {\scriptsize{$1$}};
\node at (4.75,.25) {\scriptsize{$2$}};
\draw [black, thick] (0,2) -- (5,2)  -- (5,0) -- (3,0) -- (3, 3) -- (1,3) -- (1,4) --(0,4) -- cycle;
\end{tikzpicture}
\\
& \rule{2eM}{0eM}
\begin{tikzpicture}[scale = 1/2, baseline = 10ex]
\draw [color = black!60, thick] (4,0) -- (4,2);
\draw [color = black!60, thick] (3,1) -- (5,1);
\draw [color = black!60, thick] (2,2) -- (2,3);
\draw [color = black!60, thick] (1,2) -- (1,3);
\draw [color = black!60, thick] (0,3) -- (1,3);
\draw[color=black!75, thick] (.5, 3.5) -- (.5, 2.5) -- (1.5,2.5);
\fill[color = black, thick] (.5,3.5) circle (.5ex);
\fill[color = black, thick] (2.5,2.5) circle (.5ex);
\fill[color = black, thick] (1.5,2.5) circle (.5ex);
\draw[color=black!75, thick] (3.5, 1.5) -- (4.5, 1.5) -- (4.5,.5);
\fill[color = black, thick] (3.5,1.5) circle (.5ex);
\fill[color = black, thick] (4.5,.5) circle (.5ex);
\fill[color = black, thick] (3.5,.5) circle (.5ex);
\node at (1.75,2.25) {\scriptsize{$3$}};
\node at (2.75,2.25) {\scriptsize{$1$}};
\node at (4.75,.25) {\scriptsize{$2$}};
\node at (3.75,.25) {\scriptsize{$4$}};
\draw [black, thick] (0,2) -- (5,2)  -- (5,0) -- (3,0) -- (3, 3) -- (1,3) -- (1,4) --(0,4) -- cycle;
\end{tikzpicture}
&& \rule{2eM}{0eM}
\begin{tikzpicture}[scale = 1/2, baseline = 10ex]
\draw [color = black!60, thick] (4,0) -- (4,2);
\draw [color = black!60, thick] (3,1) -- (5,1);
\draw [color = black!60, thick] (2,2) -- (2,3);
\draw [color = black!60, thick] (1,2) -- (1,3);
\draw [color = black!60, thick] (0,3) -- (1,3);
\draw[color=black!75, thick] (.5, 3.5) -- (.5, 2.5) -- (1.5,2.5);
\fill[color = black, thick] (.5,3.5) circle (.5ex);
\fill[color = black, thick] (2.5,2.5) circle (.5ex);
\fill[color = black, thick] (1.5,2.5) circle (.5ex);
\draw[color=black!75, thick] (3.5, 1.5) -- (3.5, .5) -- (4.5,.5);
\fill[color = black, thick] (3.5,1.5) circle (.5ex);
\fill[color = black, thick] (4.5,.5) circle (.5ex);
\fill[color = black, thick] (4.5,1.5) circle (.5ex);
\node at (1.75,2.25) {\scriptsize{$2$}};
\node at (2.75,2.25) {\scriptsize{$1$}};Symmetric functions and the Frobenius map
\node at (4.75,1.25) {\scriptsize{$3$}};
\node at (4.75,.25) {\scriptsize{$4$}};
\draw [black, thick] (0,2) -- (5,2)  -- (5,0) -- (3,0) -- (3, 3) -- (1,3) -- (1,4) --(0,4) -- cycle;
\end{tikzpicture}
\end{align*}
\caption{All rim hook tableaux of shape $((3,1),(2,2))$ and type $(1,3,1,3),(1,-1,-1,1)$. }
\label{fig:rimhooktableaux}
\end{figure}

\begin{proposition}[The wreath Murnaghan-Nakayama rule \cite{Stembridge,MRW}]
Let $c(\sigma)$ be the cycle structure of $\sigma \in Z_k \wr S_n$ written in some fixed order, and $a(\sigma)$ be the C-types of the respective
cycles. Then, the irreducible character indexed by $\vec{\lambda}$ is given by
\begin{equation*}
\chi^{\vec{\lambda}}( \sigma ) = \sum_{T \in \RHT_{\vec{\lambda},(c(\sigma),a(\sigma))}} w(T).
\end{equation*}
\end{proposition}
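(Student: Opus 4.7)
The plan is to prove the wreath Murnaghan--Nakayama rule by reducing to the classical rule for $S_n$ through the Frobenius characteristic map for $Z_k \wr S_n$. For each $j \in \{0,1,\dots,k-1\}$ I would introduce an independent alphabet $y^{(j)} = (y_1^{(j)}, y_2^{(j)}, \dots)$, and define a characteristic map on class functions valued in $\bigotimes_{j=0}^{k-1} \Lambda(y^{(j)})$, normalized so that the irreducible character $\chi^{\vec{\lambda}}$ is sent to $\prod_{j=0}^{k-1} s_{\lambda^j}(y^{(j)})$. The alphabets $y^{(j)}$ are obtained from $k$ base alphabets $x^{(0)},\dots,x^{(k-1)}$ (indexed by the elements of $Z_k$) via the Fourier transform of $Z_k$,
$$y_i^{(j)} = \frac{1}{k} \sum_{r=0}^{k-1} \Psi^j(u_r^{-1})\, x_i^{(r)},$$
so that the two bases are orthogonal and Schur-positive in the $y^{(j)}$ variables corresponds to irreducibility.

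First, I would compute the image of the indicator function of a single cycle. A pure cycle of length $m$ and C-type $u_j$ gives, when summed over its conjugates, an element whose characteristic image in the $x$-variables is proportional to $\sum_{r=0}^{k-1} \Psi^j(u_r)\, p_m(x^{(r)})$; by orthogonality of the characters of $Z_k$, this collapses under the inverse Fourier transform to a scalar multiple of $p_m(y^{(j)})$. Since the indicator of the conjugacy class of $\sigma$ factors (up to centralizer constants) as a product over the cycles of $\sigma$, its characteristic image is proportional to $\prod_{i=1}^{\ell(c)} p_{c_i}(y^{(j_i)})$, where $(c_i, u_{j_i})$ ranges over the cycles of $\sigma$ with their C-types. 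Consequently $\chi^{\vec{\lambda}}(\sigma)$ is, up to the normalizing constant, the coefficient of $\prod_j s_{\lambda^j}(y^{(j)})$ in this product of power sums.

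Second, I would evaluate this coefficient by applying the classical Murnaghan--Nakayama rule one cycle at a time. Because the alphabets $y^{(j)}$ are independent, multiplying $\prod_j s_{\lambda^j}(y^{(j)})$ by $p_{c_i}(y^{(j_i)})$ only interacts with the $j_i$-th factor, producing $\sum_\zeta \sign(\zeta)\, s_{\lambda^{j_i} + \zeta}(y^{(j_i)})$, where $\zeta$ runs over rim hooks of size $c_i$ added to $\lambda^{j_i}$. Iterating over the cycles and reassembling gives precisely a signed sum over rim hook tableaux $T \in \RHT_{\vec{\lambda},(c(\sigma),a(\sigma))}$; the factor $\Psi^{j_i}(a_i)$ attached to each rim hook is exactly the Fourier transform scalar recorded in the previous step, now naturally assigned to the $i$-th insertion.

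The hard part is the bookkeeping: carefully tracking centralizer orders, the $1/k$ factor in the Fourier transform, and the $1/n!$ in the characteristic map so that the net scalar in front of the sum is $1$ and the only residual weights per rim hook are $\Psi^j(a_i)\,\sign(\zeta_i)$, as asserted. Once the normalization of the characteristic map is pinned down by testing it against a basis (for example the irreducibles themselves or the regular representation), everything else reduces to an iterated application of the $S_n$ Murnaghan--Nakayama rule on each of the $k$ independent alphabets.
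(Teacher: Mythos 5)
The paper does not prove this proposition; it is stated as a cited fact from the references (Stembridge, MRW, Macdonald's Appendix B), and is then \emph{used} as an input when proving Theorem~\ref{theorem:mainresult} in Section~\ref{section:main_proof}. Your overall strategy — push the class function through a Frobenius characteristic for $Z_k\wr S_n$, exploit a discrete Fourier transform between group-element-indexed and character-indexed alphabets, then iterate the $S_n$ Murnaghan--Nakayama rule — is indeed the route those references take, so the plan is sound in outline. But as written there is a genuine error in the central step, and it is not merely a bookkeeping issue.

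The problem is the claim that the characteristic image of a cycle of C-type $u_j$ ``collapses under the inverse Fourier transform to a scalar multiple of $p_m(y^{(j)})$,'' and the ensuing statement that the characteristic of $1_{C_{\vec\lambda}}$ is (up to scalar) $\prod_i p_{c_i}(y^{(j_i)})$. The collapse happens in the \emph{other} direction: it is the cycle's image that is a single power sum $p_m(x^{(r)})$ in the conjugacy-class-indexed alphabet $x^{(r)}$, and rewriting in the irrep-indexed (Schur) alphabets $y^{(j)}$ produces the \emph{sum} $p_m(x^{(r)}) = \sum_j \Psi^j(u_r)\,p_m(y^{(j)})$ (up to conjugation) — exactly the paper's $P_{\vec\lambda}[X^0,\dots,X^{k-1}]=\prod_r p_{\lambda^r}\bigl[\sum_j \Psi^j_r X^j\bigr]$. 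This sum is not decorative: it is precisely what lets a rim hook coming from a cycle of C-type $u_r$ land in \emph{any} $\gamma^j$, picking up the weight $\Psi^j(u_r)$. If the characteristic really did collapse to a single $p_m(y^{(j)})$, then multiplying $\prod_j s_{\lambda^j}(y^{(j)})$ by $p_{c_i}(y^{(j_i)})$ would force every rim hook from a $C_{j_i}$-cycle into $\lambda^{j_i}$, producing only a small subfamily of $\RHT_{\vec\lambda,(c(\sigma),a(\sigma))}$ and the wrong character value. You can see this go wrong already for $k=2$: the image of a $2$-cycle of C-type $u_1$ is $p_2(y^{(0)}) - p_2(y^{(1)})$, not a lone $p_2(y^{(1)})$, and the minus sign on the $y^{(1)}$-term is exactly the $\Psi^1(u_1)$ of the wreath rule. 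The fix is straightforward once the roles of $x^{(r)}$ and $y^{(j)}$ are swapped back, but the step ``the factor $\Psi^{j_i}(a_i)$ is the Fourier transform scalar'' is not a normalization detail to be deferred; it is the point where the structure of the rule enters, and as stated it cannot give the full sum over rim hook tableaux.
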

This means that, from the previous example, 
\[\text{ if }
\sigma \in C_{((3,1),(2,2))},
\text{
then } \chi^{((3,1),(2,2))}(\sigma) = 1.\]

\section{Color rules and the main result}\label{section:color-rules}
We assume that we are working with some ring, say $R=\mathbb{C}((t_1,t_2,\dots))$, with indeterminates $t_1,t_2,\dots$; and we are interested in computing a class function $\chi: Z_k \wr S_n \rightarrow R$. It is also important to mention that we will use brackets to denote multisets, so that $[1^2,2] = [1,1,2]= [1,2,1]$ and $[1,\emptyset]\neq[1]$.

\subsection{Definitions}

\begin{definition}[Color rules]
A {\emph{color rule}} is a multiset of colors $F= [f_1^{m_1},f_2^{m_2},\dots]$.
A coloring of an element $\sigma \in Z_k \wr S_n$ under the color rule is a pair $(f,\sigma)$ where $f = (f_{r_1},\dots,f_{r_n})$ gives a multiset $[f_{r_1},\dots,f_{r_n}]$ contained (with multiplicities) in $F$ and satisfies the
following condition:

\begin{center} 
If $i$ and $j$ are indices in the same cycle of $\sigma$, then $f_{r_i} = f_{r_j}$.
\end{center}

The set of all colorings by the coloring rule $F$ will be denoted by $F(Z_k \wr S_n)$. \\

Given two color rules $F$ and $G$,
the set of colorings $F\times G(Z_k \wr S_n)$ is defined as the set of pairs
$(h,\sigma)$ where $h= ((f_{r_1},g_{s_1}),\dots, (f_{r_n},g_{s_n}))$ is composed of two colorings $f =(f_{r_1},\dots,f_{r_n})$ and $g = (g_{s_1},\dots,g_{s_n})$. That is, $((f,\sigma),(g,\sigma)) \in F(Z_k \wr S_n) \times G(Z_k \wr S_n)$. In particular, this means that if $i$ and $j$ are in the same cycle of $\sigma$, then $f_{r_i} = f_{r_j}$ and $g_{s_i} = g_{s_j}$.
\end{definition}

\begin{definition}\label{definition:color_rule}
A class function $\chi: Z_k \wr S_n \rightarrow R$ is given by the coloring rule $F$ if there is
\begin{enumerate}
     \item a ``value'' function $p_F:F \rightarrow \mathbb{Z}$ and 
     \item a ``weight'' function $\rho_F:F \rightarrow R$
\end{enumerate}
so that
$\chi(\sigma)$ can be computed as
\[
\chi(\sigma) = \sum_{(g,\sigma) \in F(Z_k \wr S_n)}  \weight(g,\sigma),
\]
where
\begin{align*}
\weight(g,\sigma) \coloneq \prod_{i=1}^n \weight(g_i,u(\sigma,i))&& \text{ and } &&
    \weight(f_i,u_j) \coloneq u_j^{p(f_i)} \rho(f_i).
\end{align*}
When the color rule, weight, and values are clear, we will write $p = p_F$ and $\rho = \rho_F$. 
\end{definition}
The following proposition follows from the definition and states that tensor products of representations whose characters are given by color rules have characters which are also given by a color rule.
\begin{proposition} \label{proposition:products_of_colors}
Suppose we have two class functions,
$\chi^M$ and $\chi^{M'}$, given by two color rules $F$ and $G$, respectively, with values $p_F,p_G$ and weights $\rho_F,\rho_G$. Then the class function $\chi^M \chi^{M'} $ is given by the color rule
$F \times G$ with value $p(f,g) = p_F(f)+ p_G(g)$ and weight $\rho(f,g)= \rho_F(f)\rho_G(g).$
In particular, if $\chi^M$ and $\chi^{M'}$ are graded characters for some modules $M$ and $M'$, then the graded character of $M \otimes M'$ is given by the color rule $F \times G$.
\end{proposition}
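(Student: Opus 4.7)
The plan is to verify the statement by a direct manipulation of the defining sums, observing that the compatibility conditions on colorings behave well under Cartesian products. Because the weight in Definition~\ref{definition:color_rule} is multiplicative in the colors, the product of two color-rule sums is, after reindexing, again a color-rule sum.

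First, I would write out the two characters using their defining formulas,
\[
\chi^{M}(\sigma) = \sum_{(f,\sigma)\in F(Z_k\wr S_n)} \prod_{i=1}^{n} u(\sigma,i)^{p_F(f_{r_i})}\rho_F(f_{r_i}),
\qquad
\chi^{M'}(\sigma) = \sum_{(g,\sigma)\in G(Z_k\wr S_n)} \prod_{i=1}^{n} u(\sigma,i)^{p_G(g_{s_i})}\rho_G(g_{s_i}),
\]
and then multiply and expand, obtaining a double sum over pairs $((f,\sigma),(g,\sigma))$. Inside the product over $i$, I would group the contributions from $f_{r_i}$ and $g_{s_i}$ and use the identity $u(\sigma,i)^{p_F(f_{r_i})} u(\sigma,i)^{p_G(g_{s_i})} = u(\sigma,i)^{p_F(f_{r_i}) + p_G(g_{s_i})}$ together with the multiplicativity of $\rho_F\rho_G$. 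Defining $p(f,g) \coloneq p_F(f) + p_G(g)$ and $\rho(f,g) \coloneq \rho_F(f)\rho_G(g)$, each factor is exactly $\weight((f_{r_i},g_{s_i}),u(\sigma,i))$ in the sense of Definition~\ref{definition:color_rule} applied to the color set $F\times G$.

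Second, I would identify pairs $((f,\sigma),(g,\sigma))\in F(Z_k\wr S_n)\times G(Z_k\wr S_n)$ with colorings $(h,\sigma)\in F\times G(Z_k\wr S_n)$ via the obvious map $h_i = (f_{r_i},g_{s_i})$. The only thing to check is that the cycle-constancy conditions match up: if $i$ and $j$ lie in the same cycle of $\sigma$, then $f_{r_i}=f_{r_j}$ and $g_{s_i}=g_{s_j}$ if and only if $h_i = h_j$, which is immediate. This gives the bijection required for the reindexing, and combined with the weight factorization above yields
\[
\chi^M(\sigma)\chi^{M'}(\sigma) = \sum_{(h,\sigma)\in F\times G(Z_k\wr S_n)}\weight(h,\sigma),
\]
which is the desired expression.

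Finally, for the tensor-product statement, I would just invoke the standard fact that the character of $M\otimes M'$ is the pointwise product $\chi^M\chi^{M'}$; the graded (or multigraded) version follows identically since each homogeneous piece is itself a representation and products of Hilbert-series-weighted characters correspond to tensor products. There is no real obstacle here: the only subtle point is the bookkeeping that the cycle-constancy condition behaves componentwise under Cartesian products of color sets, and that the exponent $u_j^{p(f,g)}$ is additive in the two value functions $p_F$ and $p_G$ precisely because $\Psi^j$ is multiplicative on $Z_k$.
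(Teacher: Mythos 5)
Your proposal is correct and matches the paper's own (terse) justification: the paper simply says the proposition "follows from the definition," and your proof is exactly the direct unwinding of Definition~\ref{definition:color_rule} that this remark implies — multiply the two weight sums, use multiplicativity of $\rho$ and additivity of $p$ in the exponent of $u(\sigma,i)$, and observe that the componentwise cycle-constancy conditions identify the index set with $F\times G(Z_k\wr S_n)$. No discrepancy with the paper.
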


\subsection{Examples}

\begin{example} \label{example:character_of_defining}
    The most basic case is already remarkably interesting, as we will see in Subsection~\ref{section:wreath_defining_representation}. Let $F = [1,\emptyset^{n-1}]$, meaning we must color a $1$-cycle with the color $1$. Let $p(1) = \rho(1)=\rho(\emptyset) = 1$ and $p(\emptyset) = 0$. The reason this is interesting is that then
    \[
    \sum_{(g,\sigma) \in F(Z_k \wr S_n)} \weight(g,\sigma) = \sum_{\sigma_i = i} u(\sigma,i) ~~= \tr( \Pi_{\defining}(\sigma))
    \]
    gives the trace of the matrix which defines the element $\sigma$. It is therefore the character of the defining representation. \\
    
    Note that taking the character $\tr^m \Pi_{\defining}$ corresponds to the $m$-fold tensor product of the defining representation. The character $\tr^m \Pi_{\defining}(\sigma)$ can be computed by coloring the cycles of $\sigma$ in the following way: over each cycle of length $1$ place a subset of $\{1,\dots,m\}$ so that no number is ever repeated. The value is then given by
    \[
    \tr^m \Pi_{\defining}(\sigma) = \sum_{S } ~~\prod_{ \sigma_i = i }
    u(\sigma,i)^{|S_i|},
    \]
    where the sum ranges over all $S = (S_1,\dots,S_n)$ forming a set partition $\cup S_i = \{1,\dots,m\}$ ($S_i \cap S_j = \emptyset$ for $i \neq j$), and $S_i = \emptyset$ if $i$ is not in a $1$-cycle ($\sigma_i \neq i$).

    For instance, for $n=8$, $k = 3$, and $m=3$, the following is an example of a coloring:
    \begin{center}
    \begin{tikzpicture}[scale = 3/4]
    \node at (.75,0) {$(1)$};
    \node at (2,0) {$(u_2 2)$};
    \node at (4,0) {$(3 ~ u_1 6~ 4)$};
    \node at (6,0) {$(u_1 5)$};
    \node at (8,0) {$(u_2 8 ~ u_17)$};
    \node at (.75,.6) {$\{2\}$};
    \node at (2,.6) {$\emptyset$};
    \node at (4,.6) {$\emptyset$};
    \node at (4.5,.6) {$\emptyset$};
    \node at (3.5,.6) {$\emptyset$};
    \node at (6,.6) {$\{1,3\}$};
    \node at (8.5,.6) {$\emptyset$};
    \node at (7.5,.6) {$\emptyset$};
    \end{tikzpicture}.
    \end{center}
    We see that the weight of this coloring is $u_0^{|\{2\}|}  \times u_1^{|\{1,3\}|}= u_2$.
\end{example}

\begin{example}
    One of the main examples we will see is given by the colors
    \[
    \mathbb{N}_d \coloneq \left[ 0^n, 1^n, \ldots, d^n \right].
    \]
    Define
    \begin{align*}
    p(i) = i && \text{ and } && \rho(i) = q^i.
    \end{align*}
    We will see that the sum
    \[
   \sum_{d \geq 0} t^d \sum_{(g,\sigma) \in \mathbb{N}_d(Z_k \wr S_n) } \weight(g,\sigma)
    \]
    is the character of a certain semigroup algebra, $K[(\Delta_1)^{\times n}]$ (see Theorem~\ref{theorem:equivariant-euler-mahonian}).
\end{example}

\subsection{The main result}

We will now state the main result regarding the decomposition of class functions given by color rules into irreducible characters, and we defer its proof to Section~\ref{section:main_proof}.
Even though the proof is noteworthy on its own, it will be more useful to cover how this result can be applied, especially to the objects we are most interested in.

\begin{definition} \label{definition:SSTYk}
Let $F = [f_1^{m_1},f_2^{m_2},\dots]$ be a color rule with value function $p$ and weight function $\rho$. Let $F_r= [ f_i^{m_i} \in F : p(f_i) = r \mod k]$  be the submultiset of $r$-valued colors. 
For $\vec{\gamma} \vdash_k n$,
let 
\[
\SSYT_k(\vec{\gamma},F)\coloneq \SSYT(\gamma^0,F_0)\times \cdots \times 
\SSYT(\gamma^{k-1},F_{k-1})
\]
denote the set of all semistandard tableaux of shape $\vec{\gamma}$ whose entries in $\gamma^r$ form a submultiset of $F_r$. Furthermore, define for $T\in\SSYT_k(\vec{\gamma},F)$ the weight 
\[
\rho(T) = \prod_{f_i} \rho(f_i)^{a_i}
\]
where $a_i$ is the number of times $f_i$ appears in $T$.
\end{definition}

Then we have the following decomposition:
\begin{theorem} \label{theorem:mainresult} Let $\chi$ be given by a color rule $F$ with value $p$ and weight $\rho$. 
For any $\vec{\gamma} \vdash_k n$,
\[
\langle \chi, \chi^{\vec{\gamma}} \rangle = \sum_{T \in \SSYT_k(\vec{\gamma},F)} \rho(T).
\]
\end{theorem}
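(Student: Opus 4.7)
The plan is to start from
$\langle \chi, \chi^{\vec\gamma}\rangle = \tfrac{1}{|Z_k\wr S_n|}\sum_{\sigma} \chi(\sigma)\,\overline{\chi^{\vec\gamma}(\sigma)}$
and expand both factors combinatorially: $\chi(\sigma)$ via Definition~\ref{definition:color_rule} and $\overline{\chi^{\vec\gamma}(\sigma)}$ via the wreath Murnaghan--Nakayama rule. This produces a sum over triples $(\sigma, g, T)$, where $g$ is an $F$-coloring of $\sigma$ and $T$ is a rim hook tableau of shape $\vec\gamma$ whose rim hook lengths match the cycle lengths of $\sigma$. Each cycle $c$ of length $m_c$, colored by $f_c$, with C-type $u_{j_c}$, whose rim hook $\zeta_c$ lies in $\gamma^{r_c}$, contributes the local factor $\rho(f_c)^{m_c}\,u_{j_c}^{p(f_c)-r_c}\,\sign(\zeta_c)$.

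The first simplification is to sum out the C-types. Grouping the sum over $\sigma$ by underlying cycle lengths and then summing each individual C-type $u_{j_c}$ over $\{0,\ldots,k-1\}$ produces a Kronecker delta $[p(f_c)\equiv r_c \pmod k]$, with the resulting $k^\ell$ factor cancelling the $k^\ell$ appearing in $z_{\vec\lambda}$. Consequently, a cycle colored by $f\in F_r$ must receive its rim hook in $\gamma^r$, and the computation factorizes across $r\in\{0,\ldots,k-1\}$: for each $r$, it reduces to a signed count of rim hook fillings of the single partition $\gamma^r$ with rim hook lengths labeled by colors from $F_r$, subject to the multiplicity constraint that each $f\in F_r$ is used in cycles of total length at most $m_f$.

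The heart of the proof is a weight-preserving, sign-reversing involution in the spirit of \cite{MendesRomero,Mendes}. Using Remark~\ref{remark:decreasing_cycle_bijection} to encode $\sigma$ as a colored word, one would identify each triple $(\sigma, g, T)$ with a rim hook tableau whose rim hooks carry color labels, scan for the first local violation of the semistandard conditions (a failed weak ascent across a row or strict ascent up a column), and perform a local surgery --- a swap, merge, or split of two rim hooks --- that flips the total sign while preserving the color labels and the weight $\prod_c \rho(f_c)^{m_c}$. The ``map of order $k$'' from the introduction enters to handle rim hooks whose length is divisible by $k$, where the C-type cancellation creates ambiguity. The fixed points of the involution should be precisely the colored rim hook tableaux whose cells form a genuine semistandard tableau $T\in\SSYT_k(\vec\gamma, F)$, each contributing $\rho(T)$.

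The main obstacle is defining the involution so that it simultaneously respects the coloring (all indices in a single cycle share a color), the C-type/partition alignment $p(f)\equiv r\pmod k$, and the rim hook geometry, and then verifying that its fixed points match $\SSYT_k(\vec\gamma, F)$ exactly, with the expected weight $\rho(T)$. A secondary concern is checking that all normalizations --- the factor $1/|Z_k\wr S_n|$, the overcounting coming from the decreasing cycle notation, and the $k^\ell$ cancellation in the C-type average --- combine cleanly to produce the final tally $\sum_{T \in \SSYT_k(\vec\gamma,F)} \rho(T)$.
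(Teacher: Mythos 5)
Your high-level plan --- expand $\langle \chi,\chi^{\vec\gamma}\rangle$ via the color rule and the wreath Murnaghan--Nakayama rule, then cancel terms by a sign-reversing involution and an order-$k$ map --- is the right strategy and matches the paper's architecture. But two things in your sketch are wrong or missing in a way that matters.

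First, your account of the ``map of order $k$'' is incorrect. It has nothing to do with rim hooks whose length is divisible by $k$, and there is no ``ambiguity from C-type cancellation.'' In the paper, the objects of $\mathcal{P}_{\vec\gamma,F}$ retain the \emph{individual} roots of unity at each position (not just the C-types per cycle); after the sign-reversing involution $\psi$ kills all fillings with a south step or two vertically adjacent cells of the same color, the map $\psi'$ locates the first cell in $\gamma^r$ whose color $f$ has $p(f)\not\equiv r\pmod k$ and multiplies the root of unity in that cell by $u_1$. Its orbits have size $k$ and their weights sum to zero by a geometric-series cancellation (Lemma~6.4), thereby enforcing $p(f)\equiv r\pmod k$ at the level of fixed points. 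Moreover, your sketch is internally inconsistent: if you really sum out the C-types first (your ``first simplification,'' which is legitimate and would produce the same Kronecker delta), the map of order $k$ is then entirely unnecessary --- you should pick one mechanism, not both. The paper does it in the opposite order: $\psi$ first, then $\psi'$; this ordering is not arbitrary, because $\psi'$ is defined on the fixed points of $\psi$ where the colors already form a semistandard array, so that the scan for the offending cell is unambiguous.

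Second --- and more fundamentally --- the sign-reversing involution, which you yourself flag as ``the main obstacle,'' is in fact the entire content of the theorem; leaving it at ``scan for the first violation and do a local surgery'' is not a proof. The paper's $\psi$ is not a semistandard-violation scan but a precise case analysis: scan bottom row to top, left to right, for the first cell $c$ such that either (a) the cell above $c$ lies in the same rim hook as $c$, or (b) the cell above $c$ is the end of a rim hook carrying the same color. In Case~(a) one disconnects $c$ from above, possibly reattaches $c$ to the rim hook ending just west of $c$ if indices allow, and then re-splits at left-to-right minima so every rim hook begins with its smallest index. In Case~(b) one disconnects $c$ from the west if needed, connects $c$ upward, and then iteratively merges eastward while the minimum index decreases. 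One must then verify (i) that $\psi$ is an involution, i.e.\ that it relocates the same cell $c$ when applied twice; (ii) that it flips exactly one south step (which requires knowing that no south step or color match occurs strictly below $c$ --- a consequence of the scan order); and (iii) that all colors, roots of unity, and hence the $\rho$-weight are preserved. None of this is in your proposal. Finally, to close the argument you need the explicit bijection between $\SSYT_k(\vec\gamma,F)$ and the fixed-point set $\mathcal{T}_{\vec\gamma,F}$ (using the decreasing-cycle-notation bijection of Remark~\ref{remark:decreasing_cycle_bijection} to reconstruct all $n!k^n$ preimages of a given semistandard coloring), which is where the normalization $1/|Z_k\wr S_n|$ is actually cancelled.
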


\section{Some examples}\label{section:some_examples}
We now list some applications.
It is important to remark that from Proposition~\ref{proposition:products_of_colors}, we can immediately take any arbitrary tensor products of the representations which are subsequently listed here and immediately give a combinatorial description of their decomposition into irreducible representations.

\subsection{The symmetric group}
\label{subsection:symmetric_group}
When $k=1$, we are looking at characters of the symmetric group. Since, $\vec{\gamma}  = \lambda$ will consist of only one partition, we will write $\SSYT(\lambda,F)$ instead of $\SSYT_1(\vec{\gamma},F)$. 

\subsubsection{Symmetric functions and the Frobenius map} \label{section:symmetric_functions}
It will be useful to recall
the following connection between class functions of the symmetric group and symmetric functions. 
For us, class functions are maps $\chi:S_n \rightarrow R = \mathbb{C}((t_1,t_2,\dots))$, which are constant on conjugacy classes. And our symmetric functions will have coefficients in $R$.

Let $C_\lambda$ be the conjugacy class of $S_n$ with permutations of cycle type $\lambda$, which is also viewed as the class function which indicates if a permutation is in $C_\lambda$. The Frobenius map, from class functions to symmetric functions, is defined by
\[
\Frob(C_\lambda) = p_\lambda/z_\lambda,
\]
where $z_\lambda = n!/|C_\lambda|$ and $p_\lambda = p_{\lambda_1} \cdots p_{\lambda_{\ell(\lambda)}}$ is the power sum symmetric function generated
by $p_r = x_1^r + x_2^r+ \cdots$. 
Then the Frobenius map sends a class function $\chi = \sum_\lambda \chi(\lambda) C_\lambda$ to
\[
\Frob(\chi) = \sum_{\lambda \vdash n} \chi(\lambda) p_\lambda/z_\lambda,
\]
where $\chi(\lambda)$ is the value of $\chi$ on $C_\lambda$.
A well-known property of the Frobenius map is that the irreducible character $\chi^\lambda$ is sent to 
\[
\Frob(\chi^\lambda) = s_\lambda \coloneq \sum_{T \in \SSYT(\lambda, [1^n,2^n,\dots])} \rho(T),
\]
where the color rule from $[1^n,2^n,\dots]$ has weight $\rho(i) =x_i$. This is, in fact, also a consequence of Theorem~\ref{theorem:mainresult}.
The (complete) homogeneous symmetric function 
\[
h_n = \sum_{a_1 \leq \cdots \leq a_n} x_{a_1} \cdots x_{a_n}
\]
then corresponds to the trivial representation. Theorem~\ref{theorem:mainresult} also would imply that if a module $M$ is isomorphic to $1 \uparrow_{S_{\mu_1} \times \cdots \times S_{\mu_\ell}}^{S_n}$, that is $M$ is generated by a single element whose stabilizer is isomorphic to $S_{\mu_1}\times \cdots \times S_{\mu_\ell}$, then
\[
\Frob\left(1 \uparrow_{S_{\mu_1} \times \cdots \times S_{\mu_\ell}}^{S_n}\right)  = h_\mu,
\]
which is further addressed in Section~\ref{section:young_tabloid}.
Before looking at our examples, we would like to warn readers that we will be making light use of plethystic substitution. The important part to remember is that if $y_1,y_2,\dots$ are positive monomials in $R$, then $F[y_1+y_2+\cdots]$ corresponds to variables substitution $F(y_1,y_2,\dots)$ in the symmetric function. More generally, when not all the terms are positive monomials, one must first expand $F = \sum_\lambda c_\lambda p_\lambda$ in terms of power sums, then set, for any expression $E$,
\[
p_r[E(t_1,t_2,\dots)] = E(t_1^r,t_2^r,\dots).
\]
When $X= x_1+x_2+\cdots$, we get $p_r [X] = x_1^r+x_2^r+\cdots = p_r$, so that for any symmetric function $F$, we have $F[X] = F.$

\subsubsection{The defining representation}

The character for the defining representation of $S_n$ is given by $\sigma \mapsto \fxd(\sigma)$, the number of fixed points of $\sigma$. 
Example~\ref{example:character_of_defining} describes the color rule:
$\fxd^m$ is computed by choosing a set partition $\{S_1,\dots, S_r\}$ of $\{1,\dots, m\}$, then placing each $S_i$ over a unique fixed point. This means $\SSYT(\lambda,[1,\emptyset^{n-1}]^{\times m} )$ can be interpreted as the set of semistandard tableaux whose entries (which are possibly empty) form a set partition of $\{1,\dots, m\}$.
Theorem~\ref{theorem:mainresult} specializes to the combinatorial proof of the following result given in \cite{MendesRomero}.
\begin{proposition}
For $\lambda \vdash n $, 
\[
\langle \chi^\lambda, \fxd^m \rangle = 
| \SSYT(\lambda, [1,\emptyset^{n-1}]^{\times m})|.
\]
\end{proposition}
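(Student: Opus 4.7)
The plan is to observe that this proposition is a direct specialization of the general machinery already set up in the paper. First, I would invoke Example~\ref{example:character_of_defining} (taking $k=1$), which establishes that the character $\fxd$ of the defining representation of $S_n$ is given by the color rule $F = [1,\emptyset^{n-1}]$ with value $p(1) = 1$, $p(\emptyset) = 0$, and all weights equal to $1$. Indeed, admissible colorings must assign the unique color $1$ to some $1$-cycle (fixed point) of $\sigma$, and the weight contribution of an uncolored cycle (or a cycle colored by $\emptyset$) equals $1$, so summing over all colorings recovers the number of fixed points.

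Second, I would apply Proposition~\ref{proposition:products_of_colors} iteratively $m-1$ times to conclude that the class function $\fxd^m$ is given by the product color rule $[1,\emptyset^{n-1}]^{\times m}$, with the product value function and product weight function (the weight remains identically $1$). A coloring under this rule assigns to each index $i$ an $m$-tuple in which, in each of the $m$ factors, a color $1$ appears at most once and only on a fixed point; unpacking this, the colorings are precisely set partitions $(S_1,\dots,S_n)$ of $\{1,\dots,m\}$ in which $S_i = \emptyset$ whenever $i$ is not a fixed point.

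Third, I would apply Theorem~\ref{theorem:mainresult} directly, obtaining
\[
\langle \chi^\lambda, \fxd^m \rangle = \sum_{T \in \SSYT_1(\lambda,\,[1,\emptyset^{n-1}]^{\times m})} \rho(T).
\]
Since $\rho$ is identically $1$ on all colors in the rule, every summand equals $1$, and the sum collapses to the cardinality $|\SSYT(\lambda,\,[1,\emptyset^{n-1}]^{\times m})|$, which is the claimed formula.

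There is really no genuine obstacle here: the work was done in Example~\ref{example:character_of_defining}, Proposition~\ref{proposition:products_of_colors}, and Theorem~\ref{theorem:mainresult}. The only thing to verify carefully is bookkeeping, namely that the $m$ copies of the color $1$ (one from each tensor factor) remain distinguishable in the product rule, so that the entries of an admissible tableau in $\SSYT(\lambda,[1,\emptyset^{n-1}]^{\times m})$ may be reinterpreted as the set-partition fillings described in the preceding paragraph of the paper. This matches the combinatorial description given in \cite{MendesRomero} and completes the identification.
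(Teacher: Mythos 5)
Your proposal is correct and matches the paper's argument exactly: identify $\fxd$ with the color rule $[1,\emptyset^{n-1}]$ via Example~\ref{example:character_of_defining} at $k=1$, pass to $\fxd^m$ via Proposition~\ref{proposition:products_of_colors}, and then apply Theorem~\ref{theorem:mainresult}, with the trivial weight $\rho\equiv 1$ collapsing the sum to a cardinality. Your extra remark about reinterpreting the $m$-tuple colorings as set-partition fillings is the same bookkeeping observation the paper makes immediately before stating the proposition.
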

For instance, suppose $n= 5,$  $m = 3$, and $\lambda = (3,2)$. Then $\SSYT(\lambda, [1,\emptyset^{4}]^{\times 3})$ would contain the following semistandard tableaux, where the order on colors is given by the minimal element of each set:
\begin{align*}
\scalebox{.8}{
\begin{tikzpicture}[scale=1]
\partitionfr{3,2};
\draw (.5,.5) node {{$\{1,2\} $}};
\draw (1.5,.5) node {{$\{3\} $}};
\end{tikzpicture}} &&
\scalebox{.8}{
\begin{tikzpicture}5
\partitionfr{3,2};
\draw (.5,.5) node {$\{1,3\}$ };
\draw (1.5,.5) node {$\{2\} $};
\end{tikzpicture} }&&
\scalebox{.8}{
\begin{tikzpicture}
\partitionfr{3,2};
\draw (.5,.5) node {$\{1\}$ };
\draw (1.5,.5) node {$\{2,3\} $};
\end{tikzpicture}} \\
\scalebox{.8}{
\begin{tikzpicture}
\partitionfr{3,2};
\draw (.5,.5) node {$\{1\} $};
\draw (1.5,.5) node {$\{2\} $};
\draw (2.5,.5) node {$\{3\} $};
\end{tikzpicture}} &&
\scalebox{.8}{
\begin{tikzpicture}
\partitionfr{3,2};
\draw (.5,.5) node {$\{1\}$ };
\draw (1.5,.5) node {$\{2\} $};
\draw (.5,1.5) node {$\{3\} $};
\end{tikzpicture}} &&
\scalebox{.8}{
\begin{tikzpicture}
\partitionfr{3,2};
\draw (.5,.5) node {$\{1\}$ };
\draw (1.5,.5) node {$\{3\} $};
\draw (.5,1.5) node {$\{2\} $};
\end{tikzpicture} }
\end{align*}
This means that
\[
\langle \chi^{(3,2)}, \fxd^3 \rangle = 6.
\]

\subsubsection{Actions on words}

Let $F = [1^{n},\dots, N^n]$. This means every cycle can get any color.
Given a word $a_1\cdots a_{n} \in \{1,\dots, N\}^n$ from $N$ letters, we have the action of $S_n$ on the positions, given by 
\[
\sigma(a_1\cdots a_n) = a_{\sigma_1} \cdots a_{\sigma_n}.
\]
Let $\chi$ be the graded character of this action, given by
\[
\chi(\sigma) = \sum_{\sigma(a_1\cdots a_n)  = a_1\cdots a_n} t_{a_1} \cdots t_{a_n}.
\]
Then $\chi$ is given by the color rule $F$ with $\rho(f_i) = t_{f_i}.$
As a consequence of Theorem~\ref{theorem:mainresult}, we have
\[
\langle  \chi^{\lambda}, \chi \rangle = \sum_{T \in \SSYT(\lambda, [1^n,\dots, N^n] ) } \rho(T) = s_\lambda[t_1+\cdots + t_N].
\] 
This gives
\[
s_{\lambda}[t_1+\dots+ t_N] = \left\langle s_{\lambda},  \Frob(\chi) \right \rangle,
\]
where $\Frob$ is the Frobenius characteristic map in Subsection~\ref{section:symmetric_functions}.
Therefore,
$$
 \Frob(\chi) = \sum_{\lambda \vdash n} s_{\lambda}[t_1+ \cdots + t_N] s_{\lambda}[X] = h_n\left[ (t_1+\cdots + t_N) X\right].
$$

When we take the $m^{\text{th}}$ Kronecker power, we use as colors $[1^n,\dots, N^n]^{\times m}$ giving
\[
\Frob(\chi^{ m}) = \sum_{\lambda \vdash n} s_\lambda[(t_1+\cdots +t_N)^m ]s_{\lambda}[X] = h_n\left[ (t_1+\cdots + t_N)^m X \right].
\]
Setting $t_i =1$ gives the multiplicities of irreducible representations in the $m$-fold tensor product of the representation arising from the action (on positions) of the symmetric group on words with $N$ letters.
\subsubsection{Young tabloids} \label{section:young_tabloid}
We take the action of $S_n$ on the set of row strict (increasing) tableaux $\RST(\mu)$ of shape $\mu$ and with entries $1,\dots, n$.  The action of $\sigma$ on $T \in \RST(\mu)$ is given by sending $i$ in $T$ to $\sigma_i$ , then rearranging the rows of the tableau in increasing order. We will denote the character of this action by $\psi_\mu$. The character is given by sending $\sigma \in S_n$ to
$$
\psi_\mu(\sigma) = |\{T \in \RST(\mu): \sigma T = T \}|.
$$
A row strict tableau $T$ is 
fixed by $\sigma$ if and only if the following holds:
\begin{center}
If $i$ and $j$ are in the same cycle of $\sigma$, then $i$ and $j$ are in the same row of $T$.
\end{center}
 For a partition $\mu$, let $[\mu] = [1^{\mu_1},\dots, n^{\mu_n}]$, with $p(f) = \rho(f) = 1$ trivial. This color rule will then give the character $\psi_\mu$. Our main result gives
\[
\langle \chi_\mu, \chi^\lambda \rangle = |\SYT(\lambda, [\mu])| = K_{\lambda,\mu},
\]
the Kostka numbers.
 Therefore, 
\[
\Frob(\psi_\mu) = \sum_{\lambda \vdash n} K_{\lambda,\mu} s_{\lambda}[X] = h_{\mu}.
\]

For any two partitions $\mu, \nu \vdash n$, let 
\begin{align*}
[\mu] \star [\nu] = \{ [(a_1,b_1),\dots, &(a_n,b_n)]: \\ &
[a_1,\dots, a_n] = [\mu] \text{ and } [b_1,\dots, b_n]= [\nu]\}
\end{align*}

Then 
\[
\langle \psi_\mu \cdot \psi_\nu~, \chi^{\lambda} \rangle = |{\SYT(\lambda, \mu \star \nu)}| ~ =  ~\langle h_\mu \ast h_{\nu} , s_{\lambda} \rangle
\]
where $\SYT(\lambda, [\mu] \star [\nu])$ is the set of semistandard tableaux whose entries, with multiplicities, are given by
 $[(a_1,b_1),\dots, (a_n,b_n)]$ for some $[(a_1,b_1),\dots, (a_n,b_n)] \in [\mu] \star [\nu]$. It is easier to see what we mean by example. So let $\mu = (3,2)$ and $\nu = (2,2,1)$. Then $[1^{\mu_1},\dots,n^{\mu_n}] = [1^3, 2^2] = [1,1,1,2,2]$, and likewise $[1^{\nu_1},\dots n^{\nu_n}] = [1,1,2,2,3]$. The $\star$ product produces the set $[\mu] \star [\nu]$
 \begin{align*}
  = \{  [(1,1),(1,1),(1,2),(2,2),(2,3)]  &, & [(1,1),(1,2),(1,2),(2,1),(2,3)], \\
 [(1,1),(1,1),(1,3),(2,2),(2,2)]  &, & [(1,1),(1,2),(1,3),(2,1),(2,2)], \\
  &&  [(1,2),(1,2),(1,3),(2,1),(2,1)] \}.
 \end{align*}

So to compute $ \langle h_{3,2} \ast h_{2,2,1} , s_{2,1,1,1} \rangle$ we must compute the number of semistandard Young tableaux of shape $(2,1,1,1)$ in each of the above multisets. It is not difficult to check that the corresponding quantities are given respectively by $1,1,0,4,$ and $0$,
the tableaux being listed in Figure~\ref{figure:kronecker_homogeneous_tableaux}.
 
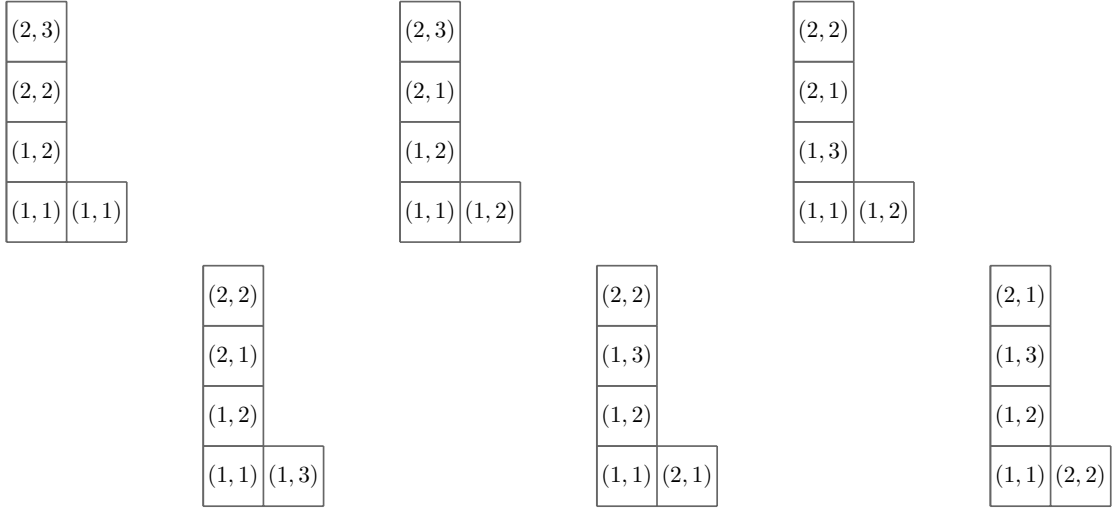
\begin{figure}
\begin{align*}
\scalebox{.8}{
\begin{tikzpicture}
\partitionfr{2,1,1,1};
\draw (.5,.5) node {$(1,1) $};
\draw (1.5,.5) node {$(1,1) $};
\draw (.5,1.5) node {$(1,2) $};
\draw (.5,2.5) node {$(2,2) $};
\draw (.5,3.5) node {$(2,3) $};
\end{tikzpicture} }&& &&
\scalebox{.8}{
\begin{tikzpicture}
\partitionfr{2,1,1,1};
\draw (.5,.5) node {$(1,1) $};
\draw (1.5,.5) node {$(1,2) $};
\draw (.5,1.5) node {$(1,2) $};
\draw (.5,2.5) node {$(2,1) $};
\draw (.5,3.5) node {$(2,3) $};
\end{tikzpicture} }&& &&
\scalebox{.8}{
\begin{tikzpicture}
\partitionfr{2,1,1,1};
\draw (.5,.5) node {$(1,1) $};
\draw (1.5,.5) node {$(1,2) $};
\draw (.5,1.5) node {$(1,3) $};
\draw (.5,2.5) node {$(2,1) $};
\draw (.5,3.5) node {$(2,2) $};
\end{tikzpicture} }\\
&&
\scalebox{.8}{
\begin{tikzpicture}
\partitionfr{2,1,1,1};
\draw (.5,.5) node {$(1,1) $};
\draw (1.5,.5) node {$(1,3) $};
\draw (.5,1.5) node {$(1,2) $};
\draw (.5,2.5) node {$(2,1) $};
\draw (.5,3.5) node {$(2,2) $};
\end{tikzpicture} }&& &&
\scalebox{.8}{
\begin{tikzpicture}
\partitionfr{2,1,1,1};
\draw (.5,.5) node {$(1,1) $};
\draw (1.5,.5) node {$(2,1) $};
\draw (.5,1.5) node {$(1,2) $};
\draw (.5,2.5) node {$(1,3) $};
\draw (.5,3.5) node {$(2,2) $};
\end{tikzpicture} }&& &&
\scalebox{.8}{
\begin{tikzpicture}
\partitionfr{2,1,1,1};
\draw (.5,.5) node {$(1,1) $};
\draw (1.5,.5) node {$(2,2) $};
\draw (.5,1.5) node {$(1,2) $};
\draw (.5,2.5) node {$(1,3) $};
\draw (.5,3.5) node {$(2,1) $};
\end{tikzpicture} }
\end{align*}
\caption{A list of all tableaux of shape $(2,1,1,1)$ with labels coming from $[(3,2)] \star[(2,2,1)].$}
\label{figure:kronecker_homogeneous_tableaux}
\end{figure}

Therefore,
\[
\langle h_{3,2} \ast h_{2,2,1} , s_{2,1,1,1} \rangle = 6.
\]

For $\mu^1,\dots, \mu^s \vdash n$ we can similarly define the product $[{\mu^1}]\star [{\mu^2} ] \star \cdots \star [{\mu^{s}}]$. We get 
\begin{corollary}\label{cor:hstars} For $\mu^1,\dots, \mu^s \vdash n$, 
\[
\langle h_{\mu^1} \ast \cdots \ast h_{\mu^s}, s_{\lambda} \rangle = | SYT(\lambda, [\mu^1] \star \cdots \star [\mu^s])|.
\]
\end{corollary}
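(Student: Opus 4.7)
The plan is to derive the corollary directly from Proposition~\ref{proposition:products_of_colors} and Theorem~\ref{theorem:mainresult}, specialized to $k=1$. The starting observation, already used for the $s=2$ case above, is that each $\psi_{\mu^i}$ is the class function arising from the color rule $[\mu^i]$ with trivial value and weight, and that $\Frob(\psi_{\mu^i}) = h_{\mu^i}$. Since $\Frob$ is an isometry and the pointwise product of class functions corresponds to the Kronecker product $\ast$ of symmetric functions, one has $\langle h_{\mu^1} \ast \cdots \ast h_{\mu^s}, s_\lambda \rangle = \langle \psi_{\mu^1} \cdots \psi_{\mu^s}, \chi^\lambda \rangle$.

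Next, I would iterate Proposition~\ref{proposition:products_of_colors} a total of $s-1$ times in order to identify the product $\psi_{\mu^1} \cdots \psi_{\mu^s}$ as the class function arising from the product color rule $F = [\mu^1] \times \cdots \times [\mu^s]$, again with trivial value and weight. Unpacking the definition, a coloring of $\sigma \in S_n$ under $F$ attaches to each index $i \in \{1, \ldots, n\}$ an $s$-tuple $(c^1_i, \ldots, c^s_i)$, constant on the cycles of $\sigma$, and such that for each $j$ the projected multiset $[c^j_1, \ldots, c^j_n]$ is a submultiset of $[\mu^j]$. Because both multisets have size $n$, this containment is forced to be equality, so the multiset of $s$-tuples $[(c^1_1, \ldots, c^s_1), \ldots, (c^1_n, \ldots, c^s_n)]$ is precisely some element of $[\mu^1] \star \cdots \star [\mu^s]$.

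Applying Theorem~\ref{theorem:mainresult} with trivial weight then yields
$$\langle \psi_{\mu^1} \cdots \psi_{\mu^s}, \chi^\lambda \rangle \;=\; |\SSYT(\lambda, F)|,$$
and by the previous paragraph, the tableaux enumerated here (with respect to any total order on $s$-tuples, say the lexicographic one) are exactly the semistandard tableaux of shape $\lambda$ whose content multiset lies in $[\mu^1] \star \cdots \star [\mu^s]$. By the very definition given before the corollary, this coincides with $|\SYT(\lambda, [\mu^1] \star \cdots \star [\mu^s])|$, completing the argument.

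The only delicate point, which is really only notational, is pinning down the multiset structure on the product color rule $F$ so that Theorem~\ref{theorem:mainresult}'s submultiset condition on the tableau content matches the disjoint union $\bigsqcup_{M \in [\mu^1] \star \cdots \star [\mu^s]} \SYT(\lambda, M)$ appearing in the corollary. The cardinality argument on coordinate projections in the second paragraph makes this automatic: a naive multiplicity such as $\prod_j \mu^j_{c^j}$ on the pair $(c^1,\dots,c^s)$ would overcount, but the fact that each projection must exhaust $[\mu^j]$ cuts the admissible content multisets down to exactly those in $[\mu^1] \star \cdots \star [\mu^s]$.
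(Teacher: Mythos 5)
Your proposal is correct and follows the same route the paper intends: identify each $\psi_{\mu^i}$ with the color rule $[\mu^i]$ (trivial $p$, $\rho$), invoke Proposition~\ref{proposition:products_of_colors} iteratively to realize $\psi_{\mu^1}\cdots\psi_{\mu^s}$ as the product color rule, apply Theorem~\ref{theorem:mainresult} at $k=1$, and translate via the Frobenius isometry and the fact that it intertwines pointwise product with Kronecker product. The one thing you add beyond the paper's terse statement is the observation that a size-$n$ submultiset of a size-$n$ multiset is the whole multiset, which forces each coordinate projection of the content to exhaust $[\mu^j]$ and hence cuts the admissible contents down to exactly $[\mu^1]\star\cdots\star[\mu^s]$; this is a genuine and welcome clarification of the multiset bookkeeping that the paper leaves implicit.
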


\noindent We should note that Corollary~\ref{cor:hstars}, when $s=2$, can also be seen by K Rule III in \cite{GarsiaRemmel}.

Another way to see why the action on $\RST(\mu)$ has character equal to that of $\psi_\mu$, we note that every element $(f_{r_1},\dots, f_{r_n})$ from $[\mu]$ which is fixed by the action of $\sigma$ represents a tableaux in $\RST(\mu)$ which is fixed by $\sigma$: place the label $i$ in row $f_{r_i}$ of $\mu$.

The action on $\RST(\mu)$ is transitive, and the stabilizer of any element is a row subgroup isomorphic to 
$
S_{\mu_1} \times S_{\mu_2} \times \cdots \times S_{\mu_{\ell(\mu)}},
$
which is sufficient to say the Frobenius image of this character is $h_{\mu_1} h_{\mu_2} \cdots h_{\mu_{\ell(\mu)}}$. On the other hand, Theorem~\ref{theorem:mainresult} also implies the following well-known fact:
\begin{proposition} Let $M$ be an $S_n$-module with character $\chi^M$. Suppose $S_n$ acts transitively on $M$, so that $M = \mathbb{C}[S_n] w$ is the orbit of a single element $w$.
Suppose the stabilizer of $w$ is isomorphic to $S_{\mu_1} \times \cdots S_{\mu_{\ell(\mu)}}$. Then
$$
\Frob(\chi^M) = h_{\mu}.
$$
\end{proposition}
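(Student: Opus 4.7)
The plan is to reduce this statement to the computation $\Frob(\psi_\mu) = h_\mu$ established immediately above the proposition, where $\psi_\mu$ is the character of the $S_n$-action on row-strict tableaux $\RST(\mu)$. The key step will be to exhibit an explicit $S_n$-equivariant isomorphism of modules $M \cong \mathbb{C}[\RST(\mu)]$, after which the Frobenius value is already known.

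First I would apply the orbit-stabilizer theorem: since $S_n$ acts transitively on a generating set of $M$ with point stabilizer $H$ equal (up to conjugation in $S_n$) to the Young subgroup $S_\mu \coloneq S_{\mu_1} \times \cdots \times S_{\mu_{\ell(\mu)}}$, the map $\sigma H \mapsto \sigma \cdot w$ is an $S_n$-equivariant bijection $S_n/S_\mu \to S_n \cdot w$. Extending linearly gives $M \cong \mathbb{C}[S_n/S_\mu]$ as $S_n$-modules, so $\chi^M$ depends only on the conjugacy class of the stabilizer.

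Next I would identify $S_n/S_\mu$ with $\RST(\mu)$ as $S_n$-sets. Let $T_0 \in \RST(\mu)$ be the canonical row-strict tableau whose $i$-th row contains $\mu_1 + \cdots + \mu_{i-1} + 1, \dots, \mu_1 + \cdots + \mu_i$. Under the action described in Subsection~\ref{section:young_tabloid} (send $i \mapsto \sigma_i$, then sort each row), the stabilizer of $T_0$ is exactly the row subgroup $S_\mu$, and any $T \in \RST(\mu)$ is in the $S_n$-orbit of $T_0$ (choose any $\sigma \in S_n$ sending the entries of $T_0$ row-by-row to those of $T$). Thus $\mathbb{C}[\RST(\mu)] \cong \mathbb{C}[S_n/S_\mu] \cong M$ as $S_n$-modules, which yields $\chi^M = \psi_\mu$. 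Applying the formula $\Frob(\psi_\mu) = h_\mu$ from the preceding paragraph, which itself was an instance of Theorem~\ref{theorem:mainresult} applied to the color rule $[\mu] = [1^{\mu_1}, \dots, n^{\mu_n}]$ with trivial weights, completes the proof.

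The only real obstacle is interpretive: the hypothesis ``the stabilizer is isomorphic to $S_{\mu_1} \times \cdots \times S_{\mu_{\ell(\mu)}}$'' must be read as \emph{conjugate in $S_n$ to the standard Young subgroup}, since abstractly isomorphic subgroups of $S_n$ need not be conjugate (for instance, $S_2 \times S_2$ embeds into $S_4$ both as a Young subgroup and as the Klein four subgroup of $A_4$), and the induced representation genuinely depends on the embedding. Once this reading is fixed, the remaining argument is routine coset-tabloid bookkeeping and an appeal to the color-rule calculation already in place.
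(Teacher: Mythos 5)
Your proof is correct and follows essentially the same route as the paper: reduce to the transitive permutation module $\mathbb{C}[\RST(\mu)]$ via orbit-stabilizer, then invoke the color-rule computation $\Frob(\psi_\mu)=h_\mu$ from Theorem~\ref{theorem:mainresult}. Your remark that ``isomorphic'' must be read as ``conjugate in $S_n$ to the standard Young subgroup'' is a legitimate and correctly-identified subtlety in the paper's phrasing.
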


We next define a coloring rule which is similar to this example. We will color our points by $[\mu]$, but we instead allow each point to get a multiset of colors.

\subsubsection{Restrictions from \texorpdfstring{$GL_n$}{GLn}}

Let $F= [ [1^{d_1},\dots, m^{d_m}]^{ n}: d_i \geq 0 ] $ be the set of colors consisting of multisets $[1^{d_1},\dots, m^{d_n}]$. For a coloring $f=(f_1,\dots,f_n)$ (so each $f_i$ is a multiset), let 
$$
\rho(f) = t_1^{a_1} \cdots t_{m}^{a_m}
$$
if, for each $i$, the number of occurrences of $i$ in all of $(f_1,\dots,f_n)$ is $a_i$.
\\

Take the polynomial ring $A = \mathbb{Q}[x_1,\dots, x_n]$ with the action of $S_n$ which sends $x_i$ under $\sigma$ to $x_{\sigma_i}$. Then the degree $d$ component of $A$, say $A_d$, is invariant under the action and produces a character $\psi_d$. Using the set of monomials $x_1^{r_1} \cdots x_n^{r_n}$ with $r_1+\cdots + r_n = d$ as a basis for $R_d$ gives a permutation action. To calculate
$$
\psi_d(\sigma) = |\{ x_1^{r_1} \cdots x_n^{r_n} :   x_1^{r_1} \cdots x_n^{r_n}= x_{\sigma_1}^{r_1} \cdots x_{\sigma_n}^{r_n} \text{ and } r_1+ \cdots + r_n = d\}|,
$$
we first note that $ x_1^{r_1} \cdots x_n^{r_n}$ is fixed by $\sigma$ provided the following holds:
$$
\text{ if $i$ and $j$ are in the same cycle of $\sigma$, then $r_i = r_j$. }
$$
We therefore must color the cycles of $\sigma$ by distributing the multiset $[1^d]$ over the cycles. The number of $1$'s above $i$ represents the power of $x_i$ in a monomial. For example, the following colored permutation would produce the monomial $ x_1^3 x_2 x_4^3 x_5 x_6^3 x_7^2 x_8^3 x_9  \in A_{17} $:
\begin{align*} 
\begin{tikzpicture}
\draw (1,0) node { (1};
\draw (2,0) node {4 };
\draw (3,0) node { 8};
\draw (4,0) node {6) };
\draw (5,0) node {(2 };
\draw (6,0) node {9 };
\draw (7,0) node { 5)};
\draw (8,0) node {(3) };
\draw (9,0) node {(7) };
\draw (1,.5) node {$[1^3]$};
\draw (2,.5) node {$[1^3]$ };
\draw (3,.5) node { $[1^3]$};
\draw (4,.5) node {$[1^3]$ };
\draw (5,.5) node {$[1] $};
\draw (6,.5) node {$[1]$ };
\draw (7,.5) node {$ [1] $};
\draw (8,.5) node {  };
\draw (9,.5) node {$[1^2 ]$ };
\end{tikzpicture}
\end{align*}
Since each element of a cycle has the same number of elements above, the produced monomial is fixed by $\sigma$. 
\\

The action of $GL_n$ on $A_1$ gives an action on $\Symm^d(A_1)= A_d$ whose character we denote by $\phi_d$. For $g \in GL_n$ with eigenvalues $\theta_1,\dots, \theta_n$, the character may be calculated by using the homogeneous symmetric function: 
$$
\phi_d(g) = h_d(\theta_1,\dots, \theta_n).
$$
In particular, when $g$ is a permutation matrix $\Pi(\sigma)$, where $\sigma$ has cycle structure $\mu$, then the eigenvalues of $\Pi(\sigma)$ with multiplicities are given by
$$
\zeta(\sigma)  = \left[1, \zeta_{\mu_1}, \dots, \zeta_{\mu_1}^{\mu_1-1}, 1, \zeta_{\mu_2}, \dots, \zeta_{\mu_2}^{\mu_2-1}, \dots,  1, \zeta_{\mu_{\ell(\mu)}}, \dots, \zeta_{\mu_{\ell(\mu)}}^{\mu_{\ell(\mu)}-1} \right] ,
$$ 
where $\zeta_r$ is an $r^{\text{th}}$ primitive root of unity. 
We get that 
$$
\psi_d(\sigma) = h_d(\zeta(\sigma)).
$$

It follows from our main result that
\begin{align*}
\langle \phi_{d_1} \cdots \phi_{d_m} \downarrow^{GL_n}_{S_n} , \chi^{\lambda} \rangle  & = \langle h_{d_1} \cdots h_{ d_m}~ \left[\zeta (\cdot)\right] , \chi^{\lambda} \rangle \\
& = \langle \psi_{d_1} \cdots \psi_{d_m} , \chi^{\lambda} \rangle \\
  &=\sum_{\cup M_i = [1^{d_1},\dots, m^{d_m}]} \sum_{T \in \SSYT(\lambda, [M_1,\dots,M_n])} \rho(T)  ~~
  \\
  &= s_\lambda\left[\frac{1}{(1-t_1)(1-t_2)\cdots (1-t_m)}\right]\Big|_{t_1^{d_1} \cdots t_m^{d_m}},
\end{align*}
where the sum runs over all semistandard with entries that are multisets $M_i$ whose union is $[1^{d_1}, 2^{d_2}, \cdots, m^{d_m}]$. 
The last equality interprets each $M_i = [1^{a_1},\dots, m^{a_m}]$ as coming from the series $1/((1-t_1)\cdots (1-t_m))$, and since the union equals of the $M_i$ equals $[1^{d_1}, 2^{d_2}, \cdots, m^{d_m}]$, we must take the coefficient of $t_1^{d_1}\cdots t_m^{d_m}$.

This decomposes the action of $S_n$ on the polynomial ring $
\mathbb{Q}[x_{i,j}:i=1,\dots, m \text{ and } j=1,\dots, n],
$
given by the Frobenius characteristic
\[
\sum_{\lambda \vdash n} s_\lambda s_\lambda\left[ \frac{1}{(1-t_1)\cdots (1-t_m)}\right] = h_n \left[ \frac{X}{(1-t_1)\cdots (1-t_m)}\right].
\]

This can be found in the work of Orellana and Zabrocki \cite{OrellanaZabrocki} and \cite{Romero}.

\subsection{Examples on \texorpdfstring{$Z_k \wr S_n$}{ZkSn}}
One can generalize many examples coming from $S_n$ to the wreath product case. However, we will focus on providing three main examples. The first is on tensor products of the defining representation; the second is on a combinatorial proof of the Murnaghan-Nakayama rule; and lastly, we will look at our main application to the affine semigroup algebras appearing in the subsequent section.

\subsubsection{The defining representation of \texorpdfstring{$Z_k \wr S_n$}{ZkSn}} \label{section:wreath_defining_representation}
Recall from Example~\ref{example:character_of_defining} that the character of the defining representation is given by the color rule $F = [1,\emptyset^{n-1}]$ with value and weight function given by $p(1) = 1$, $p(\emptyset) = 0$, and $\rho = 1$. 
We immediately see from Theorem~\ref{theorem:mainresult} that if $k >1$, then $\tr \Pi_{\defining} = \chi^{((n-1),(1),\emptyset,\dots, \emptyset)}$: there is only one semistandard tableau in the entries $[1,\emptyset^{n-1}]$ where $1$ is placed in $\gamma^1$ and all $\emptyset$'s are placed in $\gamma^0$, and this tableau has shape $((n-1),(1),\emptyset,\dots, \emptyset)$.

The product $[1,\emptyset^{n-1}]^{\times m}$ will then produce the colors for computing the trace of the $m$-fold tensor product of the defining representation, $\Pi_{\defining}^{\otimes m}$. For a given $\vec{\gamma} \vdash_k n$, the set
$\SSYT_k(\vec{\gamma},[1,\emptyset^{n-1}]^{\times m})$ can be interpreted as the set of all semsistandard tableaux of shape $\vec{\gamma}$ whose entries form a set partition of $\{1,\dots, m\}$, say $\{S_1,\dots, S_n \}$, such that $S_i$ appears in $\gamma^r$ if and only if $|S_i| = r  \mod k$.
For instance, Figure~\ref{fig:defining_tableaux_example} gives a tableau which contributes to the calculation of $\left \langle \chi^{((3,1),(2,2))}, \tr \Pi_{\defining}^{\otimes 10} \right \rangle.$

\begin{figure}[ht]
    \centering
\begin{align*}
    \begin{tikzpicture}[scale = 3/4, baseline = 10ex]
\draw [color = black!60, thick] (4,0) -- (4,2);
\draw [color = black!60, thick] (3,1) -- (5,1);
\draw [color = black!60, thick] (2,2) -- (2,3);
\draw [color = black!60, thick] (1,2) -- (1,3);
\draw [color = black!60, thick] (0,3) -- (1,3);
\node at (.5,2.5) {\tiny{$ 1,4 $}};
\node at (3.5,.5) {\tiny{$2$}};
\node at (1.5,2.5) {\tiny{$ 7,9 $}};
\node at (3.5,1.5) {\tiny{$3,5,6$}};
\node at (4.5,.5) {\tiny{$8$}};
\node at (4.5,1.5) {\tiny{$10$}};
\draw [black, thick] (0,2) -- (5,2)  -- (5,0) -- (3,0) -- (3, 3) -- (1,3) -- (1,4) --(0,4) -- cycle;
\end{tikzpicture}
\end{align*}
  \caption{An element of $\SSYT_2\left(((3,1),(2,2)), [1,\emptyset^{n-1}]^{\times 10}\right)$ interpreted using set partitions. Sets are drawn without brackets, and they are ordered by minimal element.}
    \label{fig:defining_tableaux_example}
\end{figure}

We immediately note something important: every cell of $\gamma^r$ for $r>0$ must contain at least $r$ elements; and at least $|\gamma^0|-\gamma^0_1$ cells in $\gamma^0$ must contain at least $k$ elements. Therefore, $\langle \chi^{\vec{\gamma}} , \tr \Pi_{\defining}^{\otimes m} \rangle = 0$ if $m < N(\vec{\gamma})$,
where 
\[
N(\vec{\gamma}) =k(|\gamma^0|-\gamma^0_1) + \sum_{i=1}^{k-1} i |\gamma^i| .
\]

Usually, determining if an irreducible representation appears in a tensor product is very difficult. Remarkably in this case, we are able to determine precisely when an irreducible representation appears!
\begin{corollary}
We have $\langle \tr \Pi_{\defining}^{\otimes m} , \chi^{\vec{\gamma}} \rangle \neq 0$
    if and only if $m = N(\vec{\gamma}) + kr$ for some $r\geq 0$.
\end{corollary}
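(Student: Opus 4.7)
The plan is to use the explicit formula from the discussion just before the corollary, namely
\[
\langle \tr \Pi_{\defining}^{\otimes m}, \chi^{\vec{\gamma}} \rangle = |\SSYT_k(\vec{\gamma}, [1, \emptyset^{n-1}]^{\times m})|,
\]
and to interpret each such tableau as an assignment of a (possibly empty) subset $S_c \subseteq \{1, \ldots, m\}$ to every cell $c$ of $\vec{\gamma}$, with the $\{S_c\}$ forming a set partition, $|S_c| \equiv r \pmod{k}$ whenever $c \in \gamma^r$, and the semistandard condition imposed via ordering the subsets by minimum element (the empty set being treated as maximal). Both directions reduce to combinatorial statements about such set partitions.

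For necessity, I would sum the sizes. Any such tableau gives
\[
m = \sum_c |S_c| \equiv \sum_{r=0}^{k-1} r|\gamma^r| = \sum_{i=1}^{k-1} i |\gamma^i| \equiv N(\vec{\gamma}) \pmod{k}.
\]
Combined with the inequality $m \geq N(\vec{\gamma})$ established immediately before the corollary, this forces $m = N(\vec{\gamma}) + kr$ for some $r \geq 0$.

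For sufficiency, given $m = N(\vec{\gamma}) + kr$, I would exhibit a single valid tableau. First I construct a minimal filling $T_0$ of size $N(\vec{\gamma})$: for each $j > 0$, fill the cells of $\gamma^j$ row by row from bottom to top with consecutive $j$-blocks of labels $\{1, \ldots, j\}, \{j+1, \ldots, 2j\}, \ldots$; for $\gamma^0$, place the empty set in the topmost cell of each column (a horizontal strip) and fill the remaining cells, which form a Young subdiagram of size $|\gamma^0| - \gamma^0_1$, with consecutive $k$-blocks in the same manner. Then I inject the excess $kr$ labels $N(\vec{\gamma})+1, \ldots, N(\vec{\gamma}) + kr$ into any single non-empty cell of $T_0$, or, in the exceptional case $\vec{\gamma} = ((n), \emptyset, \ldots, \emptyset)$ where no non-empty cell exists, into the leftmost cell of $\gamma^0$. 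Because the new labels exceed every existing one, the minimum of the modified cell is unchanged, so all row and column comparisons of $T_0$ persist; meanwhile the cell's size shifts by $kr$, hence remains in the correct residue class modulo $k$.

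The main obstacle I anticipate is not the counting but the bookkeeping for the semistandard check inside $\gamma^0$: one must confirm that the ``empties at column tops'' pattern is compatible with the convention that $\emptyset$ is maximal among colors, and that the remaining non-empty cells really do form a Young subdiagram admitting a valid row-by-row filling. Once $T_0$ is in hand, the augmentation step is essentially tautological, since appending labels larger than the current maximum to any set never changes its minimum, which is the only quantity governing the semistandard comparisons by minimum element.
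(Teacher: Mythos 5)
Your argument is correct and follows the same route as the paper: combine the lower bound $m \geq N(\vec{\gamma})$ (established just before the corollary) with a residue computation for necessity, and exhibit a tableau for sufficiency. The paper is terse on both counts — it asserts "cardinalities must increase by at least $k$ from minimal cardinalities" and that a tableau "can always be found" — whereas you make both steps fully explicit: your congruence $m \equiv \sum_r r|\gamma^r| \equiv N(\vec{\gamma}) \pmod{k}$ is a cleaner way to obtain the divisibility, and your construction of $T_0$ (empty sets on the column-top horizontal strip of $\gamma^0$, minimal blocks elsewhere) followed by dumping the excess $kr$ labels into one cell is exactly the kind of witness the paper leaves to the reader. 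The small wrinkle you flagged — whether the empties-at-column-tops pattern is semistandard — resolves because within each row the column tops form a suffix, so $\emptyset$ (maximal) never sits to the left of a nonempty cell.
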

\begin{proof}
    From the previous observations, the sum of minimal cardinalities for each cell in $\vec{\gamma}$ gives the smallest $m$ for which $\langle \chi^{\vec{\gamma}} , \tr \Pi_{\defining}^{\otimes m} \rangle \neq 0$:
$
m =N(\vec{\gamma}).
$

The cardinalities of the sets in each cell must always increase by at least $k$ from these minimal cardinalities. Therefore, if $\langle \chi^{\vec{\gamma}} , \tr \Pi_{\defining}^{\otimes m} \rangle \neq 0$, we must have $m = N(\vec{\gamma}) + kr$ for some $r\geq 0$. On the other hand, given such an $m$, we can always find a tableau in $\SSYT_k(\vec{\gamma},[1,\emptyset^{n-1}]^{\times m})$.
\end{proof}

\subsubsection[A combinatorial proof of the Murnaghan-Nakayama rule]{A combinatorial proof of the Murnaghan-Nakayama rule, multisymmetric functions, and the Frobenius map for wreath products}
\label{section:murnaghan-nakayama}
Suppose our color rule is given by $X^{\ast} = [x_{i,j}^n: i=0,\dots, k-1 \text{ and } j \geq 1]$. And suppose that the value of $x_{i,j}$ is given by $p(x_{i,j})  = i$, and that the weight is given by $\rho(x_{i,j} ) = x_{i,j}$. Then if we let $X^i = x_{i,1}+x_{i,2} + \cdots$,
\begin{align*}
\sum_{(\sigma,f) \in X^{\ast}(Z_k \wr S_n) } \weight(\sigma,f) & =   \prod_{r=0}^{k-1} 
\prod_{l=1}^{\ell(\lambda^r(\sigma))} \sum_{i=0}^{k-1} \sum_{j\geq 1} (u_r)^{p(x_{i,j})\lambda^r(\sigma)_l}  \rho(x_{i,j})^{\lambda^r(\sigma)_l} \\
& = 
\prod_{r=0}^{k-1} 
\prod_{l=1}^{\ell(\lambda^r(\sigma))} p_{\lambda^r(\sigma)_l} \left[ \sum_{i=0}^{k-1} { \Psi}^i(u_r) X^i \right].
\end{align*}

To simplify the notation, let us define for $\vec{\lambda}(\sigma) = \vec{\lambda}$ the multisymmetric function 
\[ P_{\vec{\lambda}(\sigma)}[X^0,\dots, X^{k-1}] = \prod_{r=0}^{k-1} p_{\lambda^r}\left[ \sum_{j=0}^{k-1} {\Psi}^j_r X^j \right]. \]

Then Theorem~\ref{theorem:mainresult} gives that 
\[ \frac{1}{n! k^n }
\sum_{\sigma \in Z_k \wr S_n} \overline{\chi}^{\vec{\gamma}}(\sigma) {{P}}_{\vec{\lambda}(\sigma)}[X^0,\dots, X^{k-1}] 
= \sum_{T \in \SSYT_k(\vec{\gamma}, X^{\ast})} \rho(T).
\]
But note that if we have the color $x_{i,j}$ in $\gamma^r$, then we are required to have $p(x_{i,j}) = i = r \mod k$. This means that each $\gamma^r$ can only be filled with elements of $X^r = x_{r,1} + x_{r,2} + \cdots$. In other words, we have found that
\begin{theorem}
For any $\vec{\gamma} \vdash_k n$, we have
\[ s_{\gamma^0}[X^0] s_{\gamma^1}[X^1] \cdots s_{\gamma^{k-1}}[X^{k-1}] = 
\sum_{\vec{\lambda} \vdash_k n} \overline{\chi}^{\vec{\gamma}}(\sigma) 
\frac{ {P}_{\vec{\lambda}}[X^0,\dots, X^{k-1}] }{ Z_{\vec{\lambda}} },
\]
where 
\[Z_{\vec{\lambda}} = \frac{n!k^n}{|C_{\vec{\lambda}}|} = 
 \prod_{i=0}^{k-1} \frac{z_{\lambda^i}}{k^{\ell(\lambda^i)}}.
\]

\end{theorem}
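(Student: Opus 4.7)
The plan is to derive the theorem as a direct consequence of the identity
\[
\frac{1}{n!\,k^n}\sum_{\sigma \in Z_k \wr S_n} \overline{\chi}^{\vec{\gamma}}(\sigma)\, P_{\vec{\lambda}(\sigma)}[X^0,\ldots,X^{k-1}] \;=\; \sum_{T \in \SSYT_k(\vec{\gamma}, X^{\ast})} \rho(T)
\]
derived immediately before the statement, by factoring its right-hand side into Schur functions in disjoint alphabets and reindexing its left-hand side by conjugacy classes of $Z_k \wr S_n$.

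For the right-hand side, Definition~\ref{definition:SSTYk} presents $\SSYT_k(\vec{\gamma}, X^{\ast})$ as the Cartesian product $\prod_{r=0}^{k-1} \SSYT(\gamma^r, X^{\ast}_r)$, where $X^{\ast}_r$ is the submultiset of colors of value $\equiv r \pmod{k}$. Since $p(x_{i,j}) = i$, this submultiset is exactly $[x_{r,j}^n : j \geq 1]$, and the cap of $n$ occurrences per letter is inactive because each shape $\gamma^r$ has at most $n$ cells. With $\rho(x_{r,j}) = x_{r,j}$, the corresponding sum is the classical tableau expression of $s_{\gamma^r}[X^r]$, so the full tableau sum factors as $\prod_{r=0}^{k-1} s_{\gamma^r}[X^r]$. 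For the left-hand side, the terms $\overline{\chi}^{\vec{\gamma}}(\sigma)$ and $P_{\vec{\lambda}(\sigma)}$ depend on $\sigma$ only through $\vec{\lambda}(\sigma)$, so collecting by conjugacy class yields
\[
\frac{1}{n!\,k^n}\sum_{\sigma} \overline{\chi}^{\vec{\gamma}}(\sigma)\, P_{\vec{\lambda}(\sigma)} \;=\; \sum_{\vec{\lambda} \vdash_k n} \frac{|C_{\vec{\lambda}}|}{n!\,k^n}\,\overline{\chi}^{\vec{\gamma}}(\sigma)\, P_{\vec{\lambda}} \;=\; \sum_{\vec{\lambda} \vdash_k n} \frac{\overline{\chi}^{\vec{\gamma}}(\sigma)\, P_{\vec{\lambda}}}{Z_{\vec{\lambda}}},
\]
where $\sigma$ on the right denotes any representative of $C_{\vec{\lambda}}$ and the last equality uses $Z_{\vec{\lambda}} = n!\,k^n/|C_{\vec{\lambda}}|$. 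Equating the two rewritings is exactly the asserted identity.

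The only subtlety, really a bookkeeping check, is confirming that the modular condition built into $\SSYT_k$ aligns with how the alphabet $X^{\ast}$ splits by value, so that the tableau sum factors cleanly across the disjoint alphabets $X^0, \ldots, X^{k-1}$. Once that is in hand, both manipulations are routine and the theorem follows.
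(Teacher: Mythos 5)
Your proposal is correct and follows essentially the same path as the paper: apply the main theorem to the color rule $X^{\ast}$, observe that the modular constraint forces the filling of $\gamma^r$ to use only the alphabet $X^r$ so the tableau sum factors as $\prod_r s_{\gamma^r}[X^r]$, and collect the character sum by conjugacy classes to replace $\frac{1}{n!k^n}\sum_{\sigma}$ by $\sum_{\vec{\lambda}} 1/Z_{\vec{\lambda}}$. Your added remark that the multiplicity cap of $n$ per color is inactive (since $\vec{\gamma}$ has only $n$ cells) is a correct bookkeeping detail the paper leaves implicit.
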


This is the symmetric function version of the Murnaghan-Nakayama rule, a variation of the identity found in \cite{MRW}. 
Here, we instead
consider the homomorphism from class functions of $\mathbb{C}[ Z_k \wr S_n ]$ to the $k$-fold tensor product of symmetric functions (or multisymmetric functions) given by sending a conjugacy class
\[
C_{\vec{\lambda}} \mapsto \frac{ \overline{P}_{\vec{\lambda}}[X^0,\dots, X^{k-1}]}{Z_{\vec{\lambda}}}.
\]
Then the irreducible character is sent to
\[
\chi^{\vec{\gamma}} \mapsto s_{\vec{\gamma}}[X] \coloneq \prod_{i=0}^{k-1} s_{\gamma^i}[X^i].
\]
Note here, that to simplify the notation, we will henceforth write
\begin{align*}
b_{\vec{\lambda}}[X] \coloneq \prod_{i=0}^{k-1} b_{\lambda^i}[X^i]
\end{align*}
when $b_\lambda$ is some classical symmetric function basis, such as the Schur functions.
\begin{proposition}
    The Frobenius map, $\Frob$, on characters of $Z_k \wr S_n$, given by
sending the conjugacy class $C_{\vec{\lambda}}$ to $\overline{P}_{\vec{\lambda}}[X]/Z_{\vec{\lambda}}$, satisfies
    \[
\Frob(\chi^{\vec{\gamma}}) = s_{\vec{\gamma}}[X].
    \]
\end{proposition}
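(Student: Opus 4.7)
The plan is to derive this directly from the preceding theorem by complex conjugation. The theorem already states
\[
s_{\vec{\gamma}}[X] \;=\; \sum_{\vec{\lambda} \vdash_k n} \overline{\chi}^{\vec{\gamma}}(C_{\vec{\lambda}}) \, \frac{P_{\vec{\lambda}}[X]}{Z_{\vec{\lambda}}},
\]
which is tantalizingly close to the claim; all that remains is to recognize the right-hand side as $\Frob(\chi^{\vec{\gamma}})$ after conjugation.

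First I would unpack $\Frob(\chi^{\vec{\gamma}})$ using linearity. Viewing each conjugacy class $C_{\vec{\lambda}}$ as the indicator class function (per the paper's convention), we have $\chi^{\vec{\gamma}} = \sum_{\vec{\lambda} \vdash_k n} \chi^{\vec{\gamma}}(C_{\vec{\lambda}}) \, C_{\vec{\lambda}}$, and the prescribed Frobenius map sends this to
\[
\Frob(\chi^{\vec{\gamma}}) \;=\; \sum_{\vec{\lambda} \vdash_k n} \chi^{\vec{\gamma}}(C_{\vec{\lambda}}) \, \frac{\overline{P}_{\vec{\lambda}}[X]}{Z_{\vec{\lambda}}}.
\]
Next, I would apply complex conjugation to the theorem's identity. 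The product $s_{\vec{\gamma}}[X]$ is self-conjugate (it has integer coefficients in the formal variables $X^0,\dots, X^{k-1}$), the normalizers $Z_{\vec{\lambda}}$ are positive rationals, and $\overline{\overline{\chi}^{\vec{\gamma}}(C_{\vec{\lambda}})} = \chi^{\vec{\gamma}}(C_{\vec{\lambda}})$. Matching term by term with the previous display then yields $\Frob(\chi^{\vec{\gamma}}) = s_{\vec{\gamma}}[X]$.

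The one point I would treat carefully, and what I view as the only real obstacle, is making precise what conjugation does to $P_{\vec{\lambda}}[X]$. Since $P_{\vec{\lambda}}[X] = \prod_r p_{\lambda^r}\!\left[\sum_j \Psi^j(u_r) X^j\right]$ is polynomial in the $X^j$ with coefficients in $\mathbb{Z}[u_1]$, the bar operation touches only the roots of unity: it sends $u_1 \mapsto u_1^{-1}$ and hence $\Psi^j(u_r) \mapsto \Psi^{-j}(u_r)$, so that $\overline{P_{\vec{\lambda}}[X]}$ agrees with the symbol $\overline{P}_{\vec{\lambda}}[X]$ appearing in the definition of $\Frob$. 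Once this identification is in place, the proposition follows immediately from the theorem with no further computation.
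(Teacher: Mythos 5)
Your argument is correct and is exactly what the paper intends: the proposition is stated as an immediate consequence of the preceding theorem, and the step you supply (conjugating the theorem's identity term by term, noting $s_{\vec{\gamma}}[X]$ is self-conjugate, $Z_{\vec{\lambda}}$ is a positive rational, $\overline{\overline{\chi}^{\vec{\gamma}}} = \chi^{\vec{\gamma}}$, and $\overline{P_{\vec{\lambda}}[X]} = \overline{P}_{\vec{\lambda}}[X]$ via $\Psi^j(u_r)\mapsto\Psi^{-j}(u_r)$) is the missing bookkeeping the paper leaves implicit. Your care about what conjugation does to $P_{\vec{\lambda}}[X]$ is precisely the right thing to check, and your identification of $\overline{P}_{\vec{\lambda}}[X]$ with the coefficient-conjugate of $P_{\vec{\lambda}}[X]$ matches the paper's (tacit) convention.
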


\noindent For a given module $M$ with (graded) character $\chi^M$, we call $\Frob\left(\chi^M \right)$ the \emph{Frobenius characteristic} of $M$.

\subsubsection{A basis for all class functions} \label{subsection:basis}
Here, we show that every class function with values in $\mathbb{C}$ can be written as a sum of class functions given by color rules. 
For a given $\vec{\lambda} \vdash_k n$, let 
\[
F^{\vec{\lambda}} = \left[ f_{i,j}^{\lambda^i_j} : 0\leq i \leq k-1 , 1\leq j \leq \ell(\lambda^i)\right],
\]
with 
\begin{align*}
p(f_{i,j}) = i && \text{ and } && \rho(f_{i,j}) = 1.
\end{align*}
The multiplicity of $\chi^{\vec{\gamma}}$ is then the number of semistandard tableaux in the $f_{i,j}$, where $f_{i,j}$ appears only in $\gamma^i$, precisely $\lambda^i_j$ times.
Therefore
\[
\chi^{F^{\vec{\lambda}}} = \chi^{\vec{\lambda}} + \sum_{\vec{\gamma} \geq \vec{\lambda}} |\SSYT_k(\vec{\gamma}, F^{\vec{\lambda}})| ~ \chi^{\vec{\gamma}}, 
\]
where we write $\vec{\gamma} \geq \vec{\lambda}$ to mean that for every $i$, $\lambda^i \leq \gamma^i$ in the dominance (or lexicographic) order.
Therefore, these characters are triangularly related to irreducible characters, and every irreducible character can be written as a linear combination of the $\chi^{F^{\vec{\lambda}}}$. One can deduce that $\Frob(\chi^{F^{\vec{\lambda}}}) = h_{\vec{\lambda}}[X]$.

\section{Projective toric varieties and their characters}\label{section:semigroup-algebras}

This section will present the main application of our methods, and has two main objectives: 
\begin{enumerate}
    \item To calculate the bigraded Frobenius characteristic of the projective coordinate ring of a general Segre product of projective toric varieties (see Proposition~\ref{proposition:polytope-character}).
    
    \item To use this calculation, along with the theory of color rules developed in previous sections, to derive \emph{equivariant} versions of Euler-Mahonian identities (see Subsection \ref{subsection:euler-mahonian-identities} and Theorem \ref{theorem:equivariant-euler-mahonian}).
\end{enumerate}

To achieve the first objective, we must define these rings and endow them with the structure of a bigraded $Z_k \wr S_n$-module. We do this in Subsections \ref{subsection:semigroup-algebras-preliminaries} and \ref{subsection:actions-on-products}. Subsections \ref{subsection:equivariant-ehrhart-theory} and \ref{subsection:euler-mahonian-identities} are interludes which relate our research to the existing literature.

Finally, we achieve the second objective in Subsections \ref{subsection:products-of-projective-spaces} and \ref{subsection:proof-of-theorem-5-X} by restricting our attention to the product of projective spaces and applying our methods. We give these results a geometric interpretation as quotients by regular sequences which directly generalize the ones described in \cite{adeyemo-szendroi-refined} and \cite{braun-olsen-statistics}, some of them conjecturally.

\subsection{Preliminaries}\label{subsection:semigroup-algebras-preliminaries}
This subsection includes a very minimal summary of relevant facts about semigroup algebras and toric varieties. For a more detailed exposition, see \cite{miller-sturmfels-combinatorial} or \cite{cox-little-toric}.

\begin{note}
    From now until the end of the section, we use the following conventions, unless specified otherwise:
    \begin{enumerate}
        \item We work over the field of complex numbers $K = \CC$. In particular, we work with algebraic varieties over $K = \CC$.
        \item $P \subseteq \mathbb{R}^m$ is a full-dimensional convex lattice polytope. The lattice is $\ZZ^m$ except in Subsection \ref{subsection:equivariant-ehrhart-theory}, where it can be arbitrary.
    \end{enumerate}
\end{note}

For a non-negative integer $d \in \NN$, the set $dP = \{dx \in \RR^m: x \in P\}$ is the $d$-th \emph{dilation} of $P$. The \emph{cone} over $P$ is the set
\[
C(P) \coloneq \left \{ \sum_{i \in I} c_i (1, v_i) \in \RR^{m + 1}: \quad I \text{ finite, } c_i \in \RR_{\geq 0}, v_i \in P \text{ for all } i \in I \right \}.
\]
It is not hard to see that
\begin{equation}\label{eq:lattice-points-cone}
C(P) \cap \ZZ^{m + 1} = \coprod_{d \geq 0} \{d\} \times (dP \cap \ZZ^m).
\end{equation}
For a point $v \in \ZZ^m$, write $v = \sum_{i = 1}^m \pi_i(v)e_i$, where $\{e_i\}_{i = 1}^m$ is the canonical basis of $\RR^m$. The \emph{affine semigroup algebra} of $P$ is the set
\[
K[P] \coloneq K\left [ x_{d, v} \coloneq t^d\prod_{i =1}^m x_i^{\pi_i(v)}: \quad (d, v) \in C(P) \cap \ZZ^{m + 1} \right ] \subseteq K [t, x_1^{\pm 1}, \ldots, x_m^{\pm 1}].
\]
Equivalently, by (\ref{eq:lattice-points-cone}),
\begin{equation}\label{eq:affine-semigroup-algebra}
    K[P] = K \left [ x_{d, v} : \quad  d \in \NN, v \in dP \cap \ZZ^m \right ].
\end{equation}
It follows from a well-known result, known as \emph{Gordan's lemma}, that $K[P]$ is a finitely generated $K$-algebra. It admits a natural grading whose degree map is given by the (largest) exponent of $t$, which we call the \emph{projective degree}. We let $\pdeg(f)$ denote the projective degree of $f \in K[P]$.

Geometrically, $X_P \coloneq \Proj K[P]$ is a projective normal toric variety of (Krull) dimension $m$, and all such varieties arise in this way. \\

Let $S$ be a finite set of generators of $K[P]$, and let $T_P\coloneq K[S]$ be the $K$-algebra freely generated by the elements of $S$. The kernel $I_P := \ker(\pi)$ of the natural surjection
\begin{equation}\label{eq:natural-surjection}
    \pi: T_P \twoheadrightarrow K[P]
\end{equation}
is called the \emph{toric ideal} of $P$ and satisfies $K[P] \cong T_P/I_P$. We will presently endow $K[P]$ with multiple gradings, and this will be an isomorphism of multigraded $K$-algebras. Geometrically, $\Proj T_P$ is a weighted projective space, and the surjection $\pi$ induces an embedding
\[
X_P \xhookrightarrow{} \Proj T_P.
\]
Chapoton \cite{chapoton-analogues} and Adeyemo, Szendrői \cite{adeyemo-szendroi-refined} study a refinement of the grading on $K[P]$. Fix $\bm{a} \in \ZZ^m$, which we call a \emph{weight vector}. Let $d \in \NN$ and $v \in P \cap \ZZ^m$. The map
\[
\cdeg(x_{d, v}) := \bm{a} \cdot v \in \ZZ
\]
extends to $K[P]$ and endows it with an additional integral grading. Here, $\bm{a} \cdot v $ denotes the standard inner product of $\RR^m$. We will call this map the \emph{combinatorial degree}. It should be mentioned that Chapoton uses certain assumptions of positivity and genericity on the pair $(P, \bm{a})$ to develop his refined Ehrhart theory, but \cite{adeyemo-szendroi-refined} does not use it, and neither do we. It is also worth noting that we can get a multigrading on $K[P]$ in a similar way by choosing a finite collection $\{\bm{a}_i\}_{i = 1}^n \subseteq \ZZ^m$. \\

Another multigrading on $K[P]$ is studied by Reiner and Rhoades \cite{reiner-rhoades-harmonics}. Their multigrading is based on Macaulay inverse systems for Minkowski sums of point configurations and is different from the one we study, already in small examples (see their Remark 3.20). \\

It will often be useful to assume that $P$ is \emph{normal}, i. e., that for all $d \in \NN$ and $v \in dP \cap \ZZ^m$ there exist $v_1, v_2, \ldots, v_d \in P \cap \ZZ^m$, such that $v = \sum_{i = 1}^d v_i$. In particular, it can be shown that this is always the case if $P$ is $2$-dimensional. This notion of normality is also known as the \emph{integer decomposition property (IDP)}. Note that, when $P$ is normal, the affine semigroup algebra of $P$ is simply
\[
K[P] = K \left [ x_v \coloneq t\prod_{i = 1}^m x_i^{\pi_i(v)}: \quad v \in P \cap \ZZ^m \right ],
\]
and (\ref{eq:natural-surjection}) induces an embedding
\[
X_P \xhookrightarrow{} \Proj(T_P) \cong \PP^{|P \cap \ZZ^m| - 1}.
\]

\subsection{Actions on products}\label{subsection:actions-on-products}
If $P' \subseteq \mathbb{R}^{m'}$ is another full-dimensional convex lattice polytope, the Cartesian product
$P \times P'$ is again a polytope with these characteristics, and for all $d \in \NN$,
\begin{equation}\label{eq:dillation-product}
    d(P \times P') = dP \times dP'.
\end{equation}
This operation translates nicely to the geometric point of view. If $P$ and $P'$ are \emph{very ample} (a condition that is always satisfied by a dilation of $P$ and $P'$), then, by \cite[Theorem 2.4.7]{cox-little-toric}, the variety $X_{P \times P'} = \Proj K[P \times P']$ corresponds to the product of projective varieties $X_{P} \times X_{P'} = \Proj K[P] \times \Proj K[P']$ with the Segre embedding
\[
X_{P} \times X_{P'} \xhookrightarrow{} \mathbb{P}^{|P \cap \mathbb{Z}^m||P' \cap \mathbb{Z}^{m'}| - 1}.
\]
In particular, for $n \in \mathbb{N}_+$, we will be interested in $P^{\times n}$, the $n$-th Cartesian power of $P$. As in (\ref{eq:affine-semigroup-algebra}) (notice the additional subindices), and using (\ref{eq:dillation-product}), we may write
\begin{align*}
    K[P^{\times n}] &= K \left [ x_{d, v} \coloneq t^d \prod_{i = 1}^n \prod_{j = 1}^m x_{i, j}^{\pi_j(v_i)}: \quad d \in \NN, v_i \in dP \cap \mathbb{Z}^{m} \text{ for all } i = 1, \ldots, n \right ] \\
    &\subseteq K[t, x_{1, 1}^{\pm 1}, \ldots, x_{n, m}^{\pm 1}].
\end{align*}

The wreath product $Z_k \wr S_n$ acts linearly on $K[P^{\times n}]$ in a natural way, namely by permuting the lattice points of the $n$ copies of $P$ and multiplying by the corresponding roots of unity. More specifically, for $\sigma = u_{a_1}\sigma_1 \cdots u_{a_n}\sigma_n \in Z_k \wr S_n$, the action of $\sigma$ fixes $t$ and maps $x_{i, j}$ to $u_{a_i} x_{\sigma_i, j}$. Then, it is extended to $K[t, x_{1, 1}^{\pm 1}, \ldots, x_{n, m}^{\pm 1}]$ multiplicatively and linearly, and finally restricted to $K[P^{\times n}]$. This is a group action because if $\sigma' = u_{a_1}'\sigma_1' \cdots u_{a_n}'\sigma_n' \in Z_k \wr S_n$, then
\[
(\sigma'\sigma)(x_{i, j}) = u_{a_{\sigma_i}}'u_{a_i}x_{(\sigma'\sigma)_i, j} = \sigma'(u_{a_i} x_{\sigma_i, j}) = \sigma'(\sigma(x_{i, j})).
\]

In order to better understand this action, we need to give a couple of definitions.

\begin{definition}[Generalized permutation representations/matrices]\label{definition:permutation-representation}
Let $G$ be a group, $S$ a set, and $KS$ the $K$-vector space generated by the elements of $S$. We say $KS$ carries a \emph{generalized permutation representation} of $G$ if there is a representation
\[
\Pi: G \rightarrow \GL(KS)
\]
such that for all $g \in G$ and $s \in S$, there exist $u(g, s) \in K$ and $r \in S$ satisfying
\[
\Pi(g)s = u(g, s)r.
\]
In other words, the matrix $\Pi(g)$ in the \emph{standard basis} (the basis given by the elements of $S$) is a \emph{generalized permutation matrix} (a matrix having only one non-zero entry in each row and column). Note that, if $u(g,s) = 1$ for all $g \in G$ and $s \in S$, then $\Pi$ is a permutation representation and $\Pi(g)$ in the standard basis is a permutation matrix.
\end{definition}

The following proposition follows.

\begin{proposition}\label{proposition:Pi-is-generalised-permutation-representation}
    Let $d \in \NN$, and write $K[\cdot]_d$ for the $d$-th projective piece of $K[\cdot]$. The representation
    \[
    \Pi: Z_k \wr S_n \to \GL(K[t, x_{1, 1}^{\pm 1}, \ldots, x_{n, m}^{\pm 1}]_d)
    \]
    given by the linear action described above is a generalised permutation representation ($S$ is a monomial basis indexed by points in the lattice $\ZZ^{mn}$). Furthermore, in the standard basis, each matrix $\Pi(\sigma)$ is unitary.
\end{proposition}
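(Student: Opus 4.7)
The proof is a direct verification that unfolds the definition of the action. The approach has two parts corresponding to the two claims: first, exhibit $\Pi(\sigma)$ as sending each monomial basis element to a scalar multiple of another monomial basis element; second, check that the resulting scalars have modulus one.

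The plan is to first parametrize the standard basis $S$ of $K[t, x_{1,1}^{\pm 1}, \ldots, x_{n,m}^{\pm 1}]_d$ by the lattice $\ZZ^{mn}$ via $\mathbf{e} = (e_{i,j}) \mapsto s_{\mathbf{e}} \coloneq t^d \prod_{i,j} x_{i,j}^{e_{i,j}}$. Given $\sigma = u_{a_1}\sigma_1 \cdots u_{a_n}\sigma_n \in Z_k \wr S_n$, I would compute $\Pi(\sigma) s_{\mathbf{e}}$ directly: since $\Pi(\sigma)$ fixes $t$, is multiplicative, and sends $x_{i,j}$ to $u_{a_i} x_{\sigma_i, j}$, one obtains
\[
\Pi(\sigma) s_{\mathbf{e}} \;=\; t^d \prod_{i, j} (u_{a_i} x_{\sigma_i, j})^{e_{i,j}} \;=\; \left( \prod_{i, j} u_{a_i}^{e_{i,j}} \right) s_{\mathbf{e}'},
\]
where $\mathbf{e}'$ is obtained from $\mathbf{e}$ by permuting the first index according to $\sigma$, namely $e'_{\sigma_i, j} = e_{i,j}$. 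The correspondence $\mathbf{e} \mapsto \mathbf{e}'$ is a bijection on $\ZZ^{mn}$ (its inverse permutes indices by $\sigma^{-1}$), so $\Pi(\sigma)$ sends each basis element to a scalar multiple of a unique other basis element, and each target basis element arises from a unique preimage. This is exactly the generalized permutation structure required by Definition~\ref{definition:permutation-representation}, with $u(\sigma, s_{\mathbf{e}}) = \prod_{i, j} u_{a_i}^{e_{i, j}}$.

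For the unitarity claim, I would note that each factor $u_{a_i}$ is a $k$-th root of unity, so $u(\sigma, s_{\mathbf{e}})$ lies on the unit circle in $\CC$. Each column of $\Pi(\sigma)$ in the standard basis therefore has exactly one nonzero entry of modulus $1$, and distinct columns are supported on distinct basis vectors (again by the bijection $\mathbf{e} \mapsto \mathbf{e}'$), so the columns form an orthonormal set with respect to the standard Hermitian inner product on $K[t, x_{1,1}^{\pm 1}, \ldots, x_{n,m}^{\pm 1}]_d$. I do not anticipate any real obstacle here: every step is bookkeeping, and the key computation is just the multiplicative extension of the rule $x_{i,j} \mapsto u_{a_i} x_{\sigma_i, j}$ to arbitrary monomials.
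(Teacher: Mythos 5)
Your proof is correct and is precisely the direct verification the paper leaves implicit: the paper states only that ``the following proposition follows'' from the definition of the action and of generalized permutation representations, giving no further argument. Your computation of $\Pi(\sigma)s_{\mathbf{e}} = \bigl(\prod_{i,j} u_{a_i}^{e_{i,j}}\bigr) s_{\mathbf{e}'}$ with $e'_{\sigma_i, j} = e_{i,j}$, the observation that $\mathbf{e}\mapsto\mathbf{e}'$ is a bijection of $\ZZ^{mn}$, and the unit-modulus check on the scalars make the statement explicit and match the paper's intent exactly.
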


To endow $K[P^{\times n}]$ with the structure of a \emph{multigraded} $Z_k \wr S_n$-module, we must impose conditions on the weight vectors $\bm{a} \in \ZZ^{mn}$ that induce the combinatorial grading.

\begin{proposition}\label{proposition:kpn-is-multigraded-zk-sn-module}
    The affine semigroup algebra $K[P^{\times n}]$ admits the structure of a multigraded $Z_k \wr S_n$-module, where the gradings are the projective degree and the combinatorial degrees are given by a finite collection of weight vectors $\{\bm{a}_i = (\bm{a}_i')^{\times n}\}_{i \in [I]} \subseteq \ZZ^{mn}$.
\end{proposition}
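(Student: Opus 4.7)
The plan is to verify three things in sequence: (i) that the linear $Z_k \wr S_n$ action defined on the ambient Laurent polynomial ring restricts to the subalgebra $K[P^{\times n}]$, (ii) that the projective grading is preserved, and (iii) that every combinatorial grading coming from a weight vector of the symmetric form $\bm{a}_i = (\bm{a}_i')^{\times n}$ is preserved. Items (i) and (ii) are essentially immediate, so the real content is in (iii), which is precisely where the symmetric assumption on the weight vectors will be used.

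For (i), I would compute the action on a generating monomial
\[
x_{d, v} \;=\; t^d \prod_{i=1}^n \prod_{j=1}^m x_{i, j}^{\pi_j(v_i)}, \qquad v = (v_1, \dots, v_n) \in (dP \cap \ZZ^m)^{n},
\]
using the formula $\sigma(x_{i, j}) = u_{a_i} x_{\sigma_i, j}$. A direct calculation, factoring the roots of unity out of the product, gives
\[
\sigma \cdot x_{d, v} \;=\; \Big(\prod_{i=1}^n u_{a_i}^{|v_i|}\Big)\, x_{d, w}, \qquad w_{\sigma_i} := v_i, \quad |v_i| := \sum_{j} \pi_j(v_i).
\]
Because each $v_i$ lies in $dP$, so does each $w_l = v_{\sigma^{-1}_l}$, and therefore $x_{d, w} \in K[P^{\times n}]$. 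Item (ii) is then immediate: $\sigma$ fixes $t$, so the $t$-exponent, which defines the projective degree, is preserved.

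For (iii), fix a weight vector $\bm{a}_i = (\bm{a}_i')^{\times n}$, so that the corresponding combinatorial degree of $x_{d, v}$ is $\bm{a}_i \cdot v = \sum_{l=1}^n \bm{a}_i' \cdot v_l$. Using the computation above, and remembering that the scalar roots of unity do not affect degrees, we get
\[
\cdeg_{\bm{a}_i}(\sigma \cdot x_{d, v}) \;=\; \bm{a}_i \cdot w \;=\; \sum_{l=1}^n \bm{a}_i' \cdot w_l \;=\; \sum_{l=1}^n \bm{a}_i' \cdot v_{\sigma^{-1}_l} \;=\; \sum_{r=1}^n \bm{a}_i' \cdot v_r \;=\; \cdeg_{\bm{a}_i}(x_{d, v}),
\]
the second-to-last equality being a reindexing of a finite sum over $\{1, \dots, n\}$. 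The block-constant shape of $\bm{a}_i$ is exactly what makes the combinatorial degree a permutation-invariant sum over the $n$ factors of $P$; without this symmetry the $S_n$-action would in general change the degree. Combining (i)--(iii) with Proposition~\ref{proposition:Pi-is-generalised-permutation-representation}, which already gives that each $\Pi(\sigma)$ is a unitary generalized permutation on the ambient space, one concludes that $K[P^{\times n}]$ is a multigraded $Z_k \wr S_n$-module as claimed.

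The only ``obstacle'' here is a bookkeeping one rather than a mathematical one: one must keep the two levels of indexing cleanly separated (the $n$ factors of $P$ being permuted by the symmetric part of the wreath product, and the $m$ coordinates within a single factor that are left untouched by $S_n$), and one must pinpoint where the symmetry hypothesis on the weight vectors actually enters. The genuine content of the proposition is precisely this last observation: to obtain an $S_n$-invariant combinatorial grading on $K[P^{\times n}]$ from a weight vector $\bm{a} \in \ZZ^{mn}$, one is essentially forced to take $\bm{a}$ to be block-constant of the form $(\bm{a}')^{\times n}$.
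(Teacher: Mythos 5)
Your proposal is correct and takes essentially the same approach as the paper's proof: both verify that the action preserves the projective grading trivially (since $t$ is fixed) and then show that the combinatorial grading is preserved because the block-constant form $\bm{a} = (\bm{a}')^{\times n}$ makes $\bm{a}\cdot v$ a permutation-invariant sum over the $n$ factors. The only cosmetic difference is that the paper phrases this via orthogonality of the restriction $\Pi' \cong m\Pi_{\defining}$ together with $\Pi'(S_n)$-invariance of $\bm{a}$, whereas you unwind that same fact by direct reindexing of the sum $\sum_l \bm{a}'\cdot v_{\sigma^{-1}_l}$; the two are logically identical.
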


\begin{proof}
Let $\Pi$ be as in Proposition \ref{proposition:Pi-is-generalised-permutation-representation}. Note that the general case follows from the case in which there is only one weight vector $\bm{a} \in \ZZ^{mn}$. \\

Since the action fixes $t$, it leaves the projective degree invariant, so it suffices to show that it also leaves the combinatorial degree invariant. For $\sigma \in Z_k \wr S_n$, write $\tau = \sigma_1 \cdots \sigma_n$ for the underlying permutation in $S_n$. Let $d \in \NN$ and $x_{d, v} \in K[P^{\times n}]_d$. Since multiplying by a constant does not change the combinatorial degree:

\begin{equation}\label{eq:combinatorial-degree-only-depends-on-restriction}
    \cdeg(\sigma(x_{d, v})) = \cdeg(\Pi(\sigma)(x_{d, v})) = \cdeg(\Pi\downarrow^{Z_k \wr S_n}_{S_n}(\tau)(x_{d, v})).
\end{equation}

Let
\[
\Pi': S_n \to \GL_{mn}(\RR)
\]
be the representation defined by the following rule: if $\tau \in S_n$, and $\{e_{i, j} \in \RR^{mn}: i = 1, \ldots, m, ~j = 1, \ldots, n\}$ is the canonical basis of $\RR^{mn}$, then $\tau e_{i, j} = e_{\tau(i), j}$. We have that $\Pi' \cong m\Pi_{\defining}$, where $\Pi_{\defining}$ is the defining representation of $S_n$. From the definition of $x_{d, v}$, 
\begin{equation}\label{eq:restriction-is-permutation-representation}
    \Pi\downarrow^{Z_k \wr S_n}_{S_n}(\tau)(x_{d, v}) = x_{d, \Pi'(\tau)(v)}.
\end{equation}
In particular, the restriction $\Pi\downarrow^{Z_k \wr S_n}_{S_n}$ is a permutation representation.

The desired condition on the weight vector $\bm{a} \in \ZZ^{mn}$ is that they are $\Pi'(S_n)$-invariant. Indeed, let $\bm{a} \in \ZZ^{mn}$ be $\Pi'(S_n)$-invariant. Then,
\begin{align*}
    \cdeg(\sigma(x_{d, v})) 
    &= \cdeg(\Pi(\sigma)(x_{d, v})) \\
    &= \cdeg(\Pi\downarrow^{Z_k \wr S_n}_{S_n}(\tau)(x_{d, v})) \tag{\ref{eq:combinatorial-degree-only-depends-on-restriction}} \\
    &= \cdeg(x_{d, \Pi'(\tau)(v)}) \tag{\ref{eq:restriction-is-permutation-representation}} \\
    &= \bm{a} \cdot (\Pi'(\tau)(v)) \\
    &= (\Pi'(\tau^{-1})(\bm{a})) \cdot v \tag{$\Pi'$ is orthogonal in the canonical basis} \\
    &= \bm{a} \cdot v \tag{$a$ is $\Pi'(S_n)$-invariant} \\
    &= \cdeg(x_{d,v}).
\end{align*}

In defining $\Pi'$, we have ordered the canonical basis of $\RR^{mn}$ in the following way:
\[
e_{1, 1} \prec e_{1,2} \prec \cdots \prec e_{1, m} \prec e_{2, 1} \prec e_{2, 2} \prec \cdots \prec e_{2, m} \prec  \cdots \prec e_{n, m}.
\]
This implies that, if $\bm{a}$ is $\Pi'(S_n)$-invariant, then $\bm{a} = (\bm{a}')^{\times n}$, where $\bm{a}' \in \ZZ^{m}$. The result follows.
\end{proof}

\begin{note}
    We restrict ourselves to the bigraded case in Proposition \ref{proposition:kpn-is-multigraded-zk-sn-module} for the rest of the paper (i. e., the gradings are given by the projective degree and a single weight vector $\bm{a} = (\bm{a}')^{\times n} \in \ZZ^{mn}$).
\end{note}

We will use the following result extensively for the rest of this section:

\begin{proposition} \label{proposition:polytope-character}
    Consider the bigraded $Z_k \wr S_n$-module structure on $K[P^{\times n}]$ as in Proposition \ref{proposition:kpn-is-multigraded-zk-sn-module}. Then, for $\sigma = u_{a_1}\sigma_1 \cdots u_{a_n}\sigma_n \in Z_k \wr S_n$ of cycle type $\vec{\lambda}(\sigma) = \vec{\lambda}$, we have the following character:
    \[
    \chi^{K[P^{\times n}]}(\sigma) = \sum_{d \geq 0} t^d \prod_{i=0}^{k-1}
    \prod_{j = 1}^{\ell(\lambda^i)} \sum_{v \in dP \cap \mathbb{Z}^m} ~
    u_i^{|v|} q^{(\lambda^i_j) (\bm{a}' \cdot v)}.
    \]
\end{proposition}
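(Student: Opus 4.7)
The plan is to compute the bigraded character directly by exploiting the fact that $\Pi$ is a generalized permutation representation in the monomial basis (Proposition \ref{proposition:Pi-is-generalised-permutation-representation}), so that only monomials fixed (up to scalar) by $\sigma$ contribute to the trace.

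First, I would fix $d \geq 0$ and write a monomial basis of $K[P^{\times n}]_d$ as $\{x_{d, \bm{v}} : \bm{v} = (v_1, \ldots, v_n) \in (dP \cap \ZZ^m)^n\}$, with $x_{d, \bm{v}} = t^d \prod_{i = 1}^n \prod_{j = 1}^m x_{i,j}^{\pi_j(v_i)}$. A direct computation from the definition of the action gives
\[
\sigma(x_{d, \bm{v}}) = t^d \prod_{i = 1}^n u_{a_i}^{|v_i|} \prod_{j=1}^m x_{\sigma_i, j}^{\pi_j(v_i)} = \left(\prod_{i = 1}^n u_{a_i}^{|v_i|}\right) x_{d, \sigma \cdot \bm{v}},
\]
where $|v_i| \coloneq \sum_j \pi_j(v_i)$ and $\sigma \cdot \bm{v}$ is the tuple whose entry at position $\sigma_i$ equals $v_i$. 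Hence $x_{d, \bm{v}}$ contributes to the trace precisely when $\sigma \cdot \bm{v} = \bm{v}$, i.e. when $\bm{v}$ is constant along the cycles of the underlying permutation $\sigma_1 \cdots \sigma_n \in S_n$.

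Next, I would parametrize the contributing monomials by assigning, to each cycle $c$ of $\sigma$, a single lattice point $v_c \in dP \cap \ZZ^m$. Its trace contribution factors as a product over cycles: for a cycle $c$ consisting of indices $i_1, \ldots, i_\ell$ with roots of unity $u_{a_{i_1}}, \ldots, u_{a_{i_\ell}}$, the scalar collected from the action of $\sigma$ on that cycle's variables is
\[
\prod_{r = 1}^{\ell} u_{a_{i_r}}^{|v_c|} = \left(\prod_{r=1}^{\ell} u_{a_{i_r}}\right)^{|v_c|} = u_{C(c)}^{|v_c|},
\]
where $u_{C(c)}$ is the C-type of $c$. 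Similarly, the combinatorial degree of $x_{d, \bm{v}}$ on the positions of $c$ is $\sum_{r} \bm{a}' \cdot v_c = \ell(c)(\bm{a}' \cdot v_c)$, so that cycle contributes $q^{\ell(c)(\bm{a}' \cdot v_c)}$. Multiplying over all cycles and summing over the independent choices of $v_c \in dP \cap \ZZ^m$ gives
\[
\chi^{K[P^{\times n}]_d}(\sigma) = t^d \prod_{c \text{ cycle of }\sigma} \sum_{v_c \in dP \cap \ZZ^m} u_{C(c)}^{|v_c|} q^{\ell(c)(\bm{a}' \cdot v_c)}.
\]

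Finally, I would regroup the product over cycles according to the C-type $i$ and the cycle lengths $\lambda^i_j$ determining the conjugacy class $\vec{\lambda}(\sigma) = \vec{\lambda}$, and sum over $d \geq 0$, yielding the stated identity. The argument is essentially a bookkeeping exercise; the only subtle point is verifying that the product of the individual root-of-unity contributions along a cycle collapses to $u_{C(c)}^{|v_c|}$ (which relies on the characters $\Psi^j$ of $Z_k$ being multiplicative, exactly as in the proof of the main theorem) and checking that $\sigma \cdot \bm{v} = \bm{v}$ is equivalent to $\bm{v}$ being constant on the cycles of $\sigma$, which is where any indexing issues would arise.
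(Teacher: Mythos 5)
Your proposal is correct and follows essentially the same route as the paper's own proof: fix the projective degree $d$, use the monomial basis, observe that $\Pi_d$ is a generalized permutation representation, characterize the monomials fixed (up to scalar) as those whose lattice data are constant on the cycles of the underlying permutation, and collect the root-of-unity and $q$-weight contributions cycle by cycle, using multiplicativity to collapse the scalar along a cycle to $u_{C(c)}^{|v_c|}$ and the constancy of $v_c$ to get the factor $q^{\ell(c)(\bm{a}'\cdot v_c)}$. The only difference is cosmetic bookkeeping (your explicit $\sigma\cdot\bm{v}$ versus the paper's indexing set $D$ of cycles), so nothing further to add.
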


\begin{proof}
    Fix $d \in \NN$. Let $K[P^{\times n}]_d$ be the $d$-th projective piece of $K[P^{\times n}]$. For $i = 1, \ldots, n$ and $v_i \in dP \cap \ZZ^m$, write
    \[
    x_{i, v_i} \coloneq \prod_{j = 1}^m x_{i, j}^{\pi_j(v_i)}.
    \]
    The set
    \[
    \mathcal{B}_d := \left \{ t^d \prod_{i = 1}^n x_{i, v_i}: \quad v_i \in dP \cap \ZZ^m, \text{ for all } i = 1, \ldots, n \right \}
    \]
    is the standard (monomial) basis of $K[P^{\times n}]_d$ (in particular, $|\mathcal{B}_d| = |dP^{\times n} \cap \ZZ^{mn}|$). Let $x = t^d \prod_{i = 1}^n x_{i, v_i} \in \mathcal{B}_d$. By definition,
    \begin{equation}\label{eq:image-basis-element}
        \sigma x = t^d \prod_{i = 1}^n u_{a_i}^{|v_i|} x_{\sigma_i, v_i},
    \end{equation}
    where $|v_i| = \sum_{j = 1}^m \pi_j(v_i)$. This gives a generalised permutation representation 
    \[
    \Pi_d: Z_k \wr S_n \to \GL(K\mathcal{B}_d).
    \]
    To calculate the coefficient of $t^d$ in the character $\chi^{K[P^{\times n}]}(\sigma)$ it suffices to count the fixed points $x_{\fixed}$ of the restriction $\Pi_d \downarrow_{S_n}^{Z_k \wr S_n}(\tau)$ with their corresponding roots of unity $u(x_{\fixed})$, where $\tau \in S_n$ is the permutation underlying $\sigma$. \\
    
    Let $D = \{(i, j) \in \ZZ^2: \quad 0 \leq i \leq k - 1, 1 \leq j \leq \ell(\lambda^i)\}$. It is not hard to see that these fixed points are the elements of $\mathcal{B}_d$ of the form
    \[
    x_{\fixed} = x_{\fixed}\left ((v_{ij})_{(i, j) \in D}\right ) = t^d \prod_{(i, j) \in D}\prod_{u_{a_k}\sigma_k \in c_j^i} x_{\sigma_{k}, v_{ij}}, \quad v_{ij} \in dP \cap \ZZ^m, \forall (i, j) \in D,
    \]
    where $c_j^i$ is the $C_i$-cycle of $\sigma$ indexed by $j$. In other words, to construct a fixed point $x_{\fixed}$, we must choose a point in $dP \cap \ZZ^m$ for each cycle $c_j^i$ of $\sigma$. \\
    
    By (\ref{eq:image-basis-element}), their corresponding roots of unity are
    \[
    u(x_{\fixed}) = \prod_{(i, j) \in D}\prod_{u_{a_k}\sigma_k \in c_j^i} u_{a_{\sigma_k}}^{|v_{ij}|} = \prod_{(i, j) \in D} u_i^{|v_{ij}|}.
    \]
    Their combinatorial degrees are
    \begin{align*}
        \cdeg(x_{\fixed}) 
        &= \cdeg \left ( \prod_{(i, j) \in D}\prod_{u_{a_k}\sigma_k \in c_j^i} x_{\sigma_{k}, v_{ij}} \right ) \\
        &= \sum_{(i, j) \in D} \sum_{u_{a_k}\sigma_k \in c_j^i} \cdeg(x_{\sigma_{k}, v_{ij}}) \\
        &= \sum_{(i, j) \in D} (\lambda_j^i) (\bm{a}' \cdot v_{ij}).
    \end{align*}
    Therefore,
    \begin{align*}
        [t^d]\chi^{K[P^{\times n}]}(\sigma) 
        &= \sum_{x_{\fixed}} u(x_{\fixed})q^{\cdeg(x_{\fixed})} \\
        &= \sum_{(v_{ij})_{i, j} \in (dP \cap \ZZ^m)^D} \prod_{(i, j) \in D} u_i^{|v_{ij}|}q^{(\lambda_j^i) (\bm{a}' \cdot v_{ij})} \\
        &= \prod_{(i, j) \in D} \sum_{v_{ij} \in dP \cap \ZZ^m} u_i^{|v_{ij}|}q^{(\lambda_j^i) (\bm{a}' \cdot v_{ij})}.
    \end{align*}
    The result follows.
\end{proof}

It is possible to find a plethystic expression for the character in Proposition~\ref{proposition:polytope-character} for any $k$ (see Theorem \ref{thm:projectivecharacter}). However, the story is specially nice for $k = 1$ (i. e., if the wreath product $Z_k \wr S_n$ is just $S_n$). Before giving the statement, however, we need to recall a few definitions from \cite{chapoton-analogues} or \cite{adeyemo-szendroi-refined}:

\begin{definition}[Refined Ehrhart polynomial, series]
    Let $\bm{a}' \in \ZZ^m$ be a weight vector. The \emph{refined Ehrhart polynomial} of $P$ is the Laurent polynomial
    \[
    L_{P, d}^{\bm{a}'}(q) = \sum_{i \in \ZZ} (\# \{v \in dP \cap \ZZ^m: \quad \bm{a}' \cdot v = i\}) q^i \in K[q, q^{-1}].
    \]
    The \emph{refined Ehrhart series} of $P$ is the formal power series
    \[
    E_P^{\bm{a}'}(t, q) = \sum_{d \geq 0} L_{P, d}^{\bm{a}'}(q)t^d \in K[q, q^{-1}][\![ t ]\!].
    \]
\end{definition}

These definitions are justified as their specialisations at $q = 1$ are the classical Ehrhart polynomial and series, respectively. In \cite{adeyemo-szendroi-refined}, explicit expressions are given in the case of the simplex, cross polytope, and hypercube. 

\begin{corollary} \label{corollary:frobenius-characteristic}
    Consider the natural bigraded $S_n$-module structure on $K[P^{\times n}]$ as in Proposition \ref{proposition:kpn-is-multigraded-zk-sn-module} with $k = 1$. Then, the bigraded Frobenius characteristic satisfies
    \[
    \Frob(K[P^{\times n}]) = \sum_{d \geq 0} t^d \sum_{\lambda \vdash n} s_\lambda \left [ L_{P, d}^{\bm{a}'}(q) \right ] s_\lambda[X],
    \]
    where $L_{P, d}^{\bm{a}'}(t)$ is the refined Ehrhart polynomial of $P$.
\end{corollary}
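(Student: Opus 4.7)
The plan is to combine Proposition~\ref{proposition:polytope-character} with the main color-rule theorem, Theorem~\ref{theorem:mainresult}. First I would specialize Proposition~\ref{proposition:polytope-character} to $k=1$: since all $u_i = 1$ and only one partition $\lambda = \lambda^0$ appears, for $\sigma \in S_n$ of cycle type $\lambda \vdash n$ the formula collapses to
\[
\chi^{K[P^{\times n}]}(\sigma) = \sum_{d \geq 0} t^d \prod_{j=1}^{\ell(\lambda)} L_{P,d}^{\bm{a}'}(q^{\lambda_j}).
\]

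Next I would recognize each coefficient $[t^d]\chi^{K[P^{\times n}]}$ as the class function given by the color rule $F_d = [v^n : v \in dP \cap \ZZ^m]$ with trivial value $p \equiv 0$ and weight $\rho(v) = q^{\bm{a}' \cdot v}$. Under this rule, a coloring of $\sigma$ assigns a single lattice point to each cycle, a cycle of length $\lambda_j$ colored by $v$ contributes $\rho(v)^{\lambda_j} = q^{\lambda_j(\bm{a}' \cdot v)}$, and summing over all colorings reproduces the product above; the multiplicity $n$ in $F_d$ is enough to accommodate any simultaneous cycle-to-point assignment. Applying Theorem~\ref{theorem:mainresult} then gives
\[
\bigl\langle [t^d]\chi^{K[P^{\times n}]},\, \chi^\lambda \bigr\rangle = \sum_{T \in \SSYT(\lambda, F_d)} \rho(T) = s_\lambda\!\left[L_{P,d}^{\bm{a}'}(q)\right],
\]
where the second equality is the standard combinatorial expansion of the Schur function read plethystically in the ``variables'' $q^{\bm{a}' \cdot v}$ indexed by lattice points of $dP$.

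Finally, I would apply the Frobenius map and use $\Frob(\chi^\lambda) = s_\lambda[X]$ from Subsection~\ref{section:symmetric_functions}, then sum over $d$ to obtain the claim. I expect the main conceptual step to be the identification of the cycle-indexed product in Proposition~\ref{proposition:polytope-character} as coming from a color rule whose colors are lattice points of $dP$; once this translation is made, the rest is essentially routine plethystic bookkeeping. The only small technicalities are fixing an order on $dP \cap \ZZ^m$ so that semistandardness is well-defined, and verifying that the multiplicity $n$ of each color in $F_d$ is indeed sufficient for every tableau of shape $\lambda \vdash n$.
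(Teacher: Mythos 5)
Your proof is correct, but it takes a genuinely different route from the paper's own proof of Corollary~\ref{corollary:frobenius-characteristic}. The paper's argument is a direct plethystic computation: it inserts the character formula from Proposition~\ref{proposition:polytope-character} into the definition $\Frob(\chi)=\frac{1}{n!}\sum_\sigma\chi(\sigma)p_{\lambda(\sigma)}[X]$, rewrites the resulting cycle-indexed product as $p_{\lambda(\sigma)}\bigl[L_{P,d}^{\bm{a}'}(q)\,X\bigr]$, and then appeals to the Cauchy-type identity $\frac{1}{n!}\sum_\sigma p_{\lambda(\sigma)}[YX]=\sum_\lambda s_\lambda[Y]\,s_\lambda[X]$ to land on the stated formula. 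It never invokes the color-rule machinery. Your argument instead first identifies $[t^d]\chi^{K[P^{\times n}]}$ as a color-rule character for $F_d=[v^n : v\in dP\cap\ZZ^m]$ with $\rho(v)=q^{\bm{a}'\cdot v}$, applies Theorem~\ref{theorem:mainresult}, and recognizes $\sum_{T\in\SSYT(\lambda,F_d)}\rho(T)$ as a Schur polynomial in the monomials $q^{\bm{a}'\cdot v}$. This is essentially the specialization of the paper's later Theorem~\ref{thm:projectivecharacter} to $k=1$, and both yield the same answer. The paper's route is shorter and uses only classical plethystic facts, so it gets the $k=1$ statement without deploying the main theorem; your route shows the result is a consequence of the color-rule framework and more naturally generalizes to arbitrary $k$, which is exactly how the paper later packages it in Theorem~\ref{thm:projectivecharacter}. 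Two small remarks: the paper chooses the value function $p(v)=|v|$ there, whereas you take $p\equiv 0$ --- equivalent when $k=1$ since only $p\bmod k$ matters, but it is worth noting that the nonzero choice is what makes the $k>1$ generalization go through; and the passage from $\sum_{T\in\SSYT(\lambda,F_d)}\rho(T)$ to $s_\lambda\bigl[L_{P,d}^{\bm{a}'}(q)\bigr]$ uses, as you observe, that each color in $F_d$ has multiplicity $n$, which is never binding for a shape with $n$ cells, so the truncated tableau sum coincides with the full Schur function.
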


\begin{proof}
    Proposition~\ref{proposition:polytope-character} implies the following equalities for $k = 1$:
    \begin{align*}
       \Frob(K[P^{\times n}])
        &= \frac{1}{n!}\sum_{\sigma \in S_n}  \chi^{K[P^{\times n}]}(\sigma) p_{\lambda(\sigma)}[X] \\
        &= \frac{1}{n!} \sum_{\sigma \in S_n} \sum_{d \geq 0} t^d
        \prod_{j = 1}^{\ell(\lambda(\sigma))} \sum_{v \in dP \cap \mathbb{Z}^m} ~
        q^{(\lambda_j) (\bm{a}' \cdot v)} p_{\lambda(\sigma)}[X] \\
        &= \frac{1}{n!} \sum_{\sigma \in S_n} \sum_{d \geq 0} t^d
        p_{\lambda(\sigma)} \left [ \sum_{v \in dP \cap \mathbb{Z}^m} ~
        q^{(\bm{a}' \cdot v)} X \right ] \\
        &= \sum_{d \geq 0} t^d \sum_{\lambda \vdash n} s_\lambda \left [ \sum_{v \in dP \cap \mathbb{Z}^m} ~
        q^{(\bm{a}' \cdot v)} \right ] s_\lambda[X] \\
        &= \sum_{d \geq 0} t^d \sum_{\lambda \vdash n} s_\lambda \left [ L_{P, d}^{\bm{a}'}(q) \right ] s_\lambda[X].
    \end{align*}
\end{proof}

\begin{example}\label{example:cross-polytope}
Let $P = \Xi_m \subseteq \RR^m$ be the $m$-dimensional cross-polytope, defined as the convex hull of the set
\[
\{e_1, -e_1, e_2, -e_2, \ldots, e_m, -e_m\}.
\]
By definition,
\begin{align*}
    K[(\Xi_m)^{\times n}]
    &= K\left [ t \prod_{i \in I} x_{i, j_i}^{\pm 1}: \quad I \subseteq \{1, \ldots, n\}, j_i \in \{1, \ldots, m\} \text{ for all } i \in I \right ] \\
    &\subseteq K \left [t, x_{1, 1}^{\pm 1}, \ldots, x_{n, m}^{\pm1} \right ].
\end{align*}

For example,
\begin{align*}
    K[(\Xi_2)^{\times 2}] 
    &= K\left [ \begin{matrix}
    t,   &     &     &     \\
    tx_{1, 1}, & tx_{1, 1}^{-1}, & tx_{1, 2}, & tx_{1, 2}^{-1}, \\
    tx_{2, 1}, & tx_{2, 1}^{-1}, & tx_{2, 2}, & tx_{2, 2}^{-1}, \\
    tx_{1, 1}x_{2, 1}, & tx_{1, 1}x_{2, 1}^{-1}, & tx_{1, 1}x_{2, 2}, & tx_{1, 1}x_{2, 2}^{-1}, \\
    tx_{1, 1}^{-1}x_{2, 1}, & tx_{1, 1}^{-1}x_{2, 1}^{-1}, & tx_{1, 1}^{-1}x_{2, 2}, & tx_{1, 1}^{-1}x_{2, 2}^{-1}, \\
    tx_{1, 2}x_{2, 1}, & tx_{1, 2}x_{2, 1}^{-1}, & tx_{1, 2}x_{2, 2}, & tx_{1, 2}x_{2, 2}^{-1}, \\
    tx_{1, 2}^{-1}x_{2, 1}, & tx_{1, 2}^{-1}x_{2, 1}^{-1}, & tx_{1, 2}^{-1}x_{2, 2}, & tx_{1, 2}^{-1}x_{2, 2}^{-1}
    \end{matrix} \right ] \\
    &\subseteq K \left [t, x_{1, 1}, x_{1, 1}^{-1}, x_{1, 2}, x_{1, 2}^{-1}, x_{2, 1}, x_{2, 1}^{-1}, x_{2, 2}, x_{2, 2}^{-1} \right ].
\end{align*}
If the weight vector is $\bm{a}' = (1, 1)$, then
\[
L^{\bm{a}'}_{\Xi_2,d}(q)  = L^{\bm{a}'}_{\Xi_2,d-1}(q) + (d+1)( q^{-d} +q^d) + 2\left( q^{-d+2} +q^{-d+4}  + \cdots + q^{d-2} \right) .
\]
It follows that 
\begin{align*} E^{\bm{a}'}_{\Xi_2}(t,q) = 
\sum_{d \geq 0} t^dL^{\bm{a}'}_{\Xi_2,d}(q) &= \frac{
(1-t^2)(1+t) 
}{
(1-qt)^2 (1-t/q)^2
} \\
& = \sum_{d \geq 0} t^d h_d\left[ 2 \left(q+\frac{1}{q} - \epsilon \right) -1 \right],
\end{align*}
where $\epsilon$ is treated as a variable within the plethystic brackets then evaluated at $-1$ outside of the brackets.
Therefore,
\begin{align*}
    \Frob(K[X_{\Xi_2}^{\times 2}])(X) 
    &= \sum_{d\geq 0 } t^d 
    \sum_{\lambda \vdash 2} s_\lambda \left[ h_d\left[ 2 \left(q+\frac{1}{q} - \epsilon \right) -1 \right]\right] s_\lambda[X].
\end{align*}
The Schur coefficients can then be calculated. For instance, since $s_{1,1} = p_{1}^2/2 -p_2/2 ,$ the coefficient of $s_{1,1}$ is given by
\begin{align*} &
\frac{1}{2} \sum_{d \geq 0 } t^d
\left( h_d\left[ 2 \left(q+\frac{1}{q} - \epsilon \right) -1 \right]^2 - 
 h_d\left[ 2 \left(q^2+\frac{1}{q^2} - \epsilon \right) -1 \right]
\right) \\
 & \hspace{4em} =t \left( q^2+2q+6+\frac{2}{q}+\frac{1}{q^2} \right) \\
  & \hspace{8em}+ 
 t^2 \left(q^4+6 q^3+14 q^2+12 q+14+ \frac{12}{q}+ \frac{14}{q^2}+ \frac{6}{q^3} + \frac{3}{q^4} \right) + \cdots.
\end{align*}
\end{example}

\begin{example}\label{example:product-of-unit-simplices}
    Let $P = \Delta_m \subseteq \RR^m$ be the $m$-dimensional unit simplex, defined as the convex hull of the set
    \[
    \{0, e_1, e_2, \ldots, e_m\}.
    \]
    By definition,
    \begin{align*}
    K[(\Delta_m)^{\times n}] 
    &= K \left [ t \prod_{i \in I} x_{i, j_i}: \quad I \subseteq \{1, \ldots, n\}, j_i \in \{1, \ldots, m\} \text{ for all } i \in I \right ] \\
    &\subseteq K[t, x_{1, 1}, \ldots, x_{n, m}].
    \end{align*}
    If the weight vector is $\bm{a}' = (1, 2, \ldots, m)$, \cite{adeyemo-szendroi-refined} shows that
    \[
    L_{\Delta_m, i}^{(1, 2, \ldots, m)} = \qbinom{i + m}{m}_q,
    \]
    where $\qbinom{i + m}{m}_q$ is a $q$-binomial coefficient. Therefore, Corollary \ref{corollary:frobenius-characteristic} implies that
    \[
    \Frob(K[\Delta_m^{\times n}]) = \sum_{i \geq 0} t^i \sum_{\lambda \vdash n} s_\lambda \left [ \qbinom{i + m}{m}_q   \right ] s_\lambda[X].
    \]
    We will refer to this example repeatedly in Subsection \ref{subsection:products-of-projective-spaces}.
\end{example}
The following application of Theorem~\ref{theorem:mainresult} gives a combinatorial description of the multiplicities of irreducible representations of $Z_k \wr S_n$ appearing in $K[P^{\times n}]$ for any $P$. We will use this for Theorem~\ref{theorem:equivariant-euler-mahonian}, where we give explicit combinatorially-defined statistics counting multiplicities in the quotient of $K[P^{\times n}]$ by a regular sequence when $P = \Delta_1 = [0, 1] \subseteq \RR$.

\begin{theorem}\label{thm:projectivecharacter}
    For $d \in \NN$, let $F_d = [ v^n: v \in dP \cap \ZZ^m]$ be a color rule with value $p(v) = |v|$ and weight $\rho(v) = q^{\bm{a}' \cdot v}$. Let $\chi^{F_d}$ be the character given by this color rule. Then,
    \[
    \chi^{K[P^{\times n}]} = \sum_{d \geq 0} t^d \chi^{K[P^{\times n}]_d} = \sum_{d \geq 0} t^d \chi^{F_d}.
    \]
    In particular, for any $\vec{\gamma} \vdash_k n$,
    \[
    \langle \chi^{K[P^{\times n}]} , \chi^{\vec{\gamma}} \rangle
    = \sum_{d \geq 0} t^d \sum_{T \in \SSYT_k(\vec{\gamma},F_d)} \rho(T).
    \]
\end{theorem}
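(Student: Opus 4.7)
The plan is to prove the first equality by direct computation, matching the color-rule expression $\chi^{F_d}$ against the character formula in Proposition~\ref{proposition:polytope-character}; the second statement is then an immediate consequence of the main decomposition result, Theorem~\ref{theorem:mainresult}, applied to each graded piece separately.

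The main calculation proceeds as follows. Unpacking Definition~\ref{definition:color_rule} for the color rule $F_d = [v^n : v \in dP \cap \ZZ^m]$, a coloring $(g,\sigma)$ assigns to each index $i \in \{1,\dots,n\}$ a color $g_i \in dP \cap \ZZ^m$, subject to the condition that indices in the same cycle receive the same color. Thus a coloring is the same data as a choice $(v_{ij})_{(i,j)\in D}$ of a lattice point $v_{ij} \in dP \cap \ZZ^m$ for each cycle $c^i_j$ of $\sigma$, where $D = \{(i,j) : 0 \le i \le k-1,\ 1 \le j \le \ell(\lambda^i)\}$. The weight of such a coloring factors over cycles: for a single $C_i$-cycle $c^i_j$ of length $\lambda^i_j$, the indices $k$ in that cycle contribute
\[
\prod_{u_{a_k} \sigma_k \in c^i_j} u(\sigma,k)^{p(v_{ij})} \rho(v_{ij}) = \Bigl(\prod_{u_{a_k} \sigma_k \in c^i_j} u_{a_k}\Bigr)^{|v_{ij}|} q^{\lambda^i_j (\bm{a}' \cdot v_{ij})} = u_i^{|v_{ij}|} q^{\lambda^i_j (\bm{a}' \cdot v_{ij})},
\]
since by definition the product of the attached roots of unity along a $C_i$-cycle is $u_i$. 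Multiplying over all cycles and summing over all colorings, the sum factorizes as
\[
\chi^{F_d}(\sigma) = \prod_{i=0}^{k-1} \prod_{j=1}^{\ell(\lambda^i)} \sum_{v \in dP \cap \ZZ^m} u_i^{|v|} q^{\lambda^i_j (\bm{a}' \cdot v)},
\]
which is exactly $[t^d]\chi^{K[P^{\times n}]}(\sigma)$ from Proposition~\ref{proposition:polytope-character}. Summing over $d \ge 0$ against $t^d$ proves the first equality.

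For the second assertion, since the pairing $\langle -, \chi^{\vec\gamma}\rangle$ is $R$-linear in the first argument, we may interchange it with $\sum_{d\ge 0} t^d$ and obtain
\[
\langle \chi^{K[P^{\times n}]}, \chi^{\vec\gamma}\rangle = \sum_{d\ge 0} t^d \langle \chi^{F_d}, \chi^{\vec\gamma}\rangle.
\]
Since $\chi^{F_d}$ is, by construction, a class function given by a color rule (with integer-valued valuation $p(v) = |v|$ and weight $\rho(v) = q^{\bm{a}' \cdot v} \in R$), Theorem~\ref{theorem:mainresult} applies and yields
\[
\langle \chi^{F_d}, \chi^{\vec\gamma}\rangle = \sum_{T \in \SSYT_k(\vec\gamma, F_d)} \rho(T),
\]
finishing the proof.

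The only potential subtlety, and what I would take as the main thing to verify carefully, is the identification $\prod_{u_{a_k}\sigma_k \in c^i_j} u_{a_k} = u_i$ for a $C_i$-cycle: this uses the definition of $C$-type from Section~\ref{section:Preliminaries}. Otherwise the argument is essentially bookkeeping, and in fact the calculation of $\chi^{F_d}(\sigma)$ above reproduces, cycle by cycle, the fixed-point enumeration in the proof of Proposition~\ref{proposition:polytope-character} — so the first equality can equivalently be seen as a translation of that proof into the language of color rules.
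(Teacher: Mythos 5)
Your proof is correct and follows the same route as the paper: the paper's own proof of Theorem~\ref{thm:projectivecharacter} simply cites Proposition~\ref{proposition:polytope-character} for the first equality and Theorem~\ref{theorem:mainresult} for the second, and your cycle-by-cycle computation of $\chi^{F_d}(\sigma)$ is precisely the fixed-point count already carried out in the proof of Proposition~\ref{proposition:polytope-character}, now phrased in the language of colorings. The one subtlety you flag, that $\prod_{\ell \in c^i_j} u(\sigma,\ell) = u_i$ for a $C_i$-cycle, is indeed the definition of $C$-type, so no gap remains.
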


\begin{proof}
    The first statement follows from  follows from Proposition~\ref{proposition:polytope-character}. The second statement follows from the first and Theorem~\ref{theorem:mainresult}.
\end{proof}

\subsection{Interlude 1: Equivariant Ehrhart theory}\label{subsection:equivariant-ehrhart-theory}

\emph{Equivariant Ehrhart theory} was introduced by Stapledon \cite{stapledon-equivariant} with motivations from algebraic geometry and mirror symmetry. Its main object of study is the \emph{equivariant Ehrhart series} which arises when a finite group $G$ acts linearly on the lattice points of a $G$-invariant convex lattice polytope $P$. We recommend \cite{elia-kim-supina-techniques} as a reference, where the theory is further developed. \\

Corollary \ref{corollary:frobenius-characteristic} may be seen as a statement in a \emph{refined} version of this equivariant Ehrhart theory. The goal of this subsection is to make this relation to the existing literature explicit. The first step is to describe a setup that is equivalent to the one in \cite{elia-kim-supina-techniques}.
\begin{definition}[Equivariant Ehrhart series]\label{definition:equivariant-ehrhart-series}
    Assume the following setup:
    \begin{enumerate}
        \item $L \subseteq \RR^N$ is a lattice (i. e., a free abelian subgroup) of rank $N$.
        \item $G$ is a finite group.
        \item $\Pi': G \to \GL(L)$ is a lattice representation (i. e., a representation $\Pi': G \to \GL_N(\RR)$ such that $L$ is $\Pi'(G)$-invariant).
        \item $P \subseteq \RR^N$ is a full-dimensional convex lattice polytope that is $\Pi'(G)$-invariant.
        \item For $d \in \NN$, $\Pi_{P, d}: G \to \GL(K(dP \cap L))$ is the permutation representation carried by the vector space $K(dP \cap L)$ such that, for all $g \in G$ and $x \in dP \cap L$, 
        \[ \Pi_{P, d}(g)(x) = \Pi'(g)(x). \]
        \item $\Pi_P = \bigoplus_{d \in \NN} \Pi_{P, d}: G \to \GL(K(C(P) \cap (L \times \ZZ)))$. In particular, $\Pi_P$ is a graded representation.
    \end{enumerate}
    Then, the \emph{equivariant Ehrhart series} of $P$ is the formal power series
    \[
    EE_P(t) = \sum_{d \geq 0} \chi(\Pi_P)_d t^d = \sum_{d \geq 0} \chi(\Pi_{P, d}) t^d \in R(G)[\![ t ]\!],
    \]
    where $R(G)$ is the ring of $K$-valued class functions of $G$.
\end{definition}

To refine this series we have to endow $\Pi_P$ with a bigrading.

\begin{proposition}\label{proposition:rho-prime-p-is-bigraded-representation}
    Assume the setup in Definition \ref{definition:equivariant-ehrhart-series}. Additionally:
    \begin{enumerate}
    \setcounter{enumi}{6}
        \item $\Pi'$ is orthogonal with respect to the standard inner product of $\RR^N$ (this choice of inner product can be made without loss of generality).
        \item $\Pi'$ contains the trivial representation $\Pi_{\triv}$ of $G$ with multiplicity $m_{\triv} \geq 1$. Equivalently, $\Pi'(G)$ acts trivially on a subspace $V_{\triv} \subseteq \RR^N$ of dimension $\geq 1$.
        \item $\bm{a} \in V_{\triv} \cap L$,
        \item $\ZZ_{\bm{a}, L} \coloneq \bm{a} \cdot L$, where $\cdot$ denotes the standard inner product of $\RR^N$.
    \end{enumerate}
    Then, we have a decomposition
    \[
    \Pi_P = \bigoplus_{d \in \NN} \bigoplus_{i \in \ZZ_{\bm{a}, L}} \Pi_{P, d, i},
    \]
    where, for each $i \in \ZZ_{\bm{a}, L}$ and $g \in G$, $\Pi_{P, d, i}(g)$ is the restriction of $\Pi_{P, d}(g)$ to the subspace $K \{v \in dP \cap L: ~~ \bm{a} \cdot v = i\}$.
   In particular, $\Pi_P$ is a \emph{bigraded representation}.
\end{proposition}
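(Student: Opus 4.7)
The plan is to verify that each subspace $V_{d,i} \coloneq K\{v \in dP \cap L : \bm{a} \cdot v = i\}$ is $\Pi_{P,d}(G)$-invariant, and then observe that for fixed $d$ the direct sum of these subspaces exhausts $K(dP \cap L)$. The key ingredient is that the combinatorial degree $v \mapsto \bm{a} \cdot v$ is a $\Pi'(G)$-invariant function on $L$, so the permutation action of $\Pi_{P,d}(g)$ on the lattice points of $dP$ cannot change this statistic.

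First, I would prove the invariance of the combinatorial degree. Given $g \in G$ and $v \in L$, using that $\Pi'$ is orthogonal (assumption~7) and that $\bm{a} \in V_{\triv}$ (assumptions~8 and~9, which imply $\Pi'(g)(\bm{a}) = \bm{a}$ for every $g$), one computes
\[
\bm{a} \cdot (\Pi'(g)(v)) \;=\; (\Pi'(g^{-1})(\bm{a})) \cdot v \;=\; \bm{a} \cdot v.
\]
Since $dP$ is $\Pi'(G)$-invariant (assumption~4) and $L$ is $\Pi'(G)$-invariant (assumption~3), the set $dP \cap L$ is also $\Pi'(G)$-invariant, and the computation above shows that the level set $\{v \in dP \cap L : \bm{a} \cdot v = i\}$ is preserved by $\Pi'(g)$ for every $g$ and every $i$. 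By the defining formula $\Pi_{P,d}(g)(x) = \Pi'(g)(x)$ for $x \in dP \cap L$, this means that $\Pi_{P,d}(g)$ restricts to an endomorphism $\Pi_{P,d,i}(g)$ of $V_{d,i}$.

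Next, I would observe that for each $v \in dP \cap L$ one has $\bm{a} \cdot v \in \bm{a} \cdot L = \ZZ_{\bm{a},L}$, so every standard basis vector of $K(dP \cap L)$ lies in exactly one $V_{d,i}$ with $i \in \ZZ_{\bm{a},L}$. Consequently,
\[
K(dP \cap L) \;=\; \bigoplus_{i \in \ZZ_{\bm{a},L}} V_{d,i},
\]
and each $\Pi_{P,d}(g)$ respects this decomposition, giving $\Pi_{P,d} = \bigoplus_{i \in \ZZ_{\bm{a},L}} \Pi_{P,d,i}$ as representations of $G$. Summing over $d \in \NN$ yields the stated bigraded decomposition of $\Pi_P$.

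There is no real obstacle here beyond bookkeeping; the only subtle point is making sure the two hypotheses on $\bm{a}$ (that it belongs to $V_{\triv}$ and to $L$) are both needed, respectively, to guarantee invariance of the statistic $\bm{a}\cdot v$ and to guarantee that the grading is indexed by $\ZZ_{\bm{a},L}$. The orthogonality assumption~7 is the minor technical hypothesis that lets one pass $\Pi'(g)$ across the inner product; since the standard inner product on $\RR^N$ can always be replaced by the $G$-average of any positive definite form, this assumption is made without loss of generality, as noted in the statement.
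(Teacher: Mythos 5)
Your proof is correct and follows essentially the same route as the paper: both reduce to the key computation that $\bm{a}\cdot(\Pi'(g)(v)) = (\Pi'(g^{-1})(\bm{a}))\cdot v = \bm{a}\cdot v$, using orthogonality of $\Pi'$ and $\bm{a}\in V_{\triv}$, and conclude that each level set of $\bm{a}\cdot(\cdot)$ spans an invariant subspace. The extra remarks you add about exhausting $K(dP\cap L)$ and about the role of $\bm{a}\in L$ are sound but not substantively different from what the paper does.
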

\begin{proof}
    It suffices to prove that $K \{v \in dP \cap L: ~~ \bm{a} \cdot v = i\}$ is a submodule of $K(dP \cap L)$. Let $g \in G, d \in \NN, i \in \ZZ_{\bm{a}, L}$, and $v \in dP \cap L$ such that $\bm{a} \cdot v = i$. Then,
    \begin{align*}
    \bm{a} \cdot (\Pi_{P, d}(g)(v)) 
        &=\bm{a} \cdot (\Pi'(g)(v)) \\
        &=(\Pi'(g^{-1})(\bm{a})) \cdot v \tag{$\Pi'$ is orthogonal} \\
        &=\bm{a} \cdot v \tag{$\bm{a} \in V_{\triv}$} \\
        &=i.
    \end{align*}
    This proves the claim.
\end{proof}
From now on, we write \emph{REET} for the setup described in Definition \ref{definition:equivariant-ehrhart-series} and Proposition \ref{proposition:rho-prime-p-is-bigraded-representation}. We may now define the refinement.
\begin{definition}[Refined equivariant Ehrhart series]
    Assume REET. Then, the \emph{refined equivariant Ehrhart series} of $P$ is the formal power series
    \[
    EE_P(t, q) = \sum_{d \geq 0} \sum_{i \in \ZZ_{\bm{a}, L}} \chi(\Pi_P)_{d, i} t^d q^i = \sum_{d \geq 0} \sum_{i \in \ZZ_{\bm{a}, L}} \chi(\Pi_{P, d, i}) t^dq^i \in R(G)[q, q^{-1}][\![ t ]\!].
    \]
\end{definition}
\begin{remark}
    Assume REET. We have
    \[
    K[P] \cong K(C(P) \cap (L \times \ZZ))
    \]
    as bigraded $G$-modules.
\end{remark}
If $G = S_n$, we can relate the bigraded Ehrhart series to the bigraded Frobenius character:
\begin{corollary}\label{corollary:relation-frob-ee}
    Assume REET. Let $n \in \NN_+$ and $G = S_n$. Then, the bigraded Frobenius characteristic satisfies
    \[
    \Frob(K[P])  = \Frob(K(C(P) \cap (L \times\ZZ)))  = \frac{1}{n!}\sum_{\sigma \in S_n} EE_P(t, q)(\sigma) p_{\lambda(\sigma)}[X],
    \]
    where $EE_P(t, q)(\sigma)$ is the refined equivariant Ehrhart series of $P$ evaluated at $\sigma$.
\end{corollary}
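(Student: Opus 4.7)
The plan is to establish the two equalities separately, and then note that both follow almost entirely by unwinding definitions. The content is essentially bookkeeping: extending the Frobenius map to a bigraded setting and matching it term-by-term with the refined equivariant Ehrhart series.

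First, I would note that the equality $\Frob(K[P]) = \Frob(K(C(P) \cap (L \times \ZZ)))$ is immediate from the remark preceding the corollary, which asserts an isomorphism of bigraded $G$-modules. Since the Frobenius characteristic depends only on the bigraded character, which is invariant under isomorphism, this equality holds termwise.

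For the second equality, I would recall the formula $\Frob(\chi) = \sum_{\lambda \vdash n} \chi(\lambda) p_\lambda / z_\lambda$ from Subsection \ref{section:symmetric_functions}, and use $z_\lambda = n!/|C_\lambda|$ to rewrite it in the form $\Frob(\chi) = \frac{1}{n!} \sum_{\sigma \in S_n} \chi(\sigma) p_{\lambda(\sigma)}[X]$. Extending this linearly to bigraded modules, for any bigraded $S_n$-module $M = \bigoplus_{d,i} M_{d,i}$ we have
\[
\Frob(M) = \sum_{d \geq 0} \sum_{i \in \ZZ_{\bm{a},L}} t^d q^i \Frob(M_{d,i}) = \frac{1}{n!}\sum_{\sigma \in S_n} \left( \sum_{d \geq 0} \sum_{i \in \ZZ_{\bm{a},L}} t^d q^i \chi^{M_{d,i}}(\sigma) \right) p_{\lambda(\sigma)}[X],
\]
after swapping the order of summation. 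Applying this to $M = K(C(P) \cap (L \times \ZZ))$ with its decomposition $\Pi_P = \bigoplus_{d,i} \Pi_{P,d,i}$ from Proposition \ref{proposition:rho-prime-p-is-bigraded-representation}, the inner sum in parentheses is precisely $\sum_{d,i} \chi(\Pi_{P,d,i})(\sigma) t^d q^i$, which by definition equals $EE_P(t,q)(\sigma)$.

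There is no real obstacle here: the proof is a matter of carefully matching the two sides. The only subtlety is to check that the swap of summations is valid, which is fine because for each fixed $d$ only finitely many $i \in \ZZ_{\bm{a},L}$ contribute (the polytope $dP$ has finitely many lattice points), so the coefficient of each $t^d$ is a Laurent polynomial in $q$ and the double sum converges as a formal power series in $t$. Thus the corollary follows by direct computation from the definitions, with no further combinatorial input required.
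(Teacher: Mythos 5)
Your proof is correct and matches the paper's intent: the paper states this corollary without proof precisely because it is the definition-chase you carry out, with the first equality coming from the preceding remark and the second from expanding the Frobenius formula $\Frob(\chi) = \frac{1}{n!}\sum_{\sigma}\chi(\sigma)p_{\lambda(\sigma)}[X]$ across the bigrading and recognizing the inner sum as $EE_P(t,q)(\sigma)$. Your note on the validity of the summation swap (finitely many lattice points in each $dP$) is the right sanity check.
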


We have achieved our goal, since Corollary \ref{corollary:frobenius-characteristic} gives a more explicit expression for the character in Corollary \ref{corollary:relation-frob-ee} in the following particular case:

\begin{itemize}
    \item $L = \ZZ^{mn}$ ($N = mn$),
    \item $P = (P')^{\times n} \subseteq \RR^{mn}$, where $P' \subseteq \RR^m$ is a full-dimensional convex lattice polytope, and
    \item $\Pi'$ decomposes (up to reordering coordinates so that $P$ is $\Pi'(S_n)$-invariant) as $m\Pi_{\defining}$, where $\Pi_{\defining}$ is the defining representation of $S_n$. This implies that $\bm{a} = (\bm{a}')^{\times n}$, where $\bm{a}' \in \ZZ^m$.
\end{itemize}

Note that, even in this particular case, not all $\Pi'(S_n)$-invariant polytopes $P \subseteq \RR^{mn}$ are Cartesian products $(P')^{\times n} \subseteq \RR^{mn}$, where $P' \subseteq \RR^m$. It would be very interesting to find refined analogues of other results and techniques in \cite{elia-kim-supina-techniques} for polytopes which are not Cartesian products. We are currently working in this direction. \\

Also note that if $k \neq 1$, the linear action we consider in Subsection \ref{subsection:actions-on-products} is \emph{not} a permutation representation. However, most of the statements in this subsection still hold after slight modifications.

\subsection{Interlude 2: Euler-Mahonian identities and the projective coinvariant algebra}\label{subsection:euler-mahonian-identities}

Before continuing, we would like to put the results of this section into context with the current literature. We will use Tables \ref{tab:euler-mahonian-identities} and \ref{tab:equivariant-euler-mahonian-identities} as an aid. The graded rings $R_{m, n, k}$ in the first column will be defined in Subsection \ref{subsection:products-of-projective-spaces}.
\begin{table}[!ht]
\centering
\small
\begin{tabular}{|p{15mm}|l|p{20mm}|p{45mm}|}
\hline
\textbf{Graded ring} & \textbf{Identity} & \textbf{Reference} & \textbf{Notes} \\
\hline
$\rule{0em}{1.6em} R_n$ & $ \frac{1}{(1-q)^n} = \frac{ \sum_{\sigma \in S_n} q^{\maj(\sigma)}}{\prod_{j=1}^n (1-q^j)}$ & MacMahon \cite{MacMahon} & $\maj$ is the major index. \\
\hline
$\rule{0em}{1.6em} R_{1, n, 1}$ & $\sum_{i \geq 0} ([i + 1]_q)^n t^i = \frac{\sum_{\sigma \in S_n} t^{\des(\sigma)}q^{\maj(\sigma)}}{\prod_{j = 0}^n (1-tq^j)}$ & Carlitz \cite{carlitz-combinatorial} & $\des$ is the number of descents.\\
\hline
$\rule{0em}{1.6em} R_{1, n, k}$ & $\sum_{i \geq 0} ([i + 1]_q)^n t^i = \frac{\sum_{\sigma \in Z_k \wr S_n} t^{\ndes(\sigma)}q^{\nmaj(\sigma)}}{(1 - t)\prod_{j = 1}^n (1-t^kq^{kj})}$ & Bagno \cite{bagno-euler} & $\ndes$ is the number of negative descents, $\nmaj$ is the negative major index.\\
& $\quad\quad\quad\quad\quad\quad\quad\quad = \frac{\sum_{\sigma \in Z_k \wr S_n} t^{\fdes(\sigma)}q^{\fmaj(\sigma)}}{(1 - t)\prod_{j = 1}^n (1-t^kq^{kj})}$ & Bagno, Biagioli \cite{bagno-biagioli-colored} & $\fdes$ is the number of flag descents, $\fmaj$ is the flag major index. \\
\hline
\end{tabular}
\caption{Euler-Mahonian identities.} \normalsize{Taking the Hilbert series of the graded rings in the first column gives the numerators in the identities from the second column.}
\label{tab:euler-mahonian-identities}
\end{table}

Braun and Olsen \cite{braun-olsen-statistics} call the formulas in Table \ref{tab:euler-mahonian-identities} \emph{Euler-Mahonian identities.} These formulas have appeared in many different contexts within algebraic combinatorics (see the discussion after Theorem 1.1 in \cite{braun-olsen-statistics}). In their paper, they study quotients of the form
\[
R_{n, k} \coloneq K[(\Delta_1)^{\times n}]/(J_{k, n}),
\]
where $K[(\Delta_1)^{\times n}]$ is the $n$-dimensional unit hypercube, and $J_{k, n}$ is an ideal generated by a regular sequence which is invariant under a linear action of the wreath product $Z_k \wr S_n$ on $K[(\Delta_1)^{\times n}]$. They define a grading on $K[(\Delta_1)^{\times n}]$ such that $J_{k, n}$ is homogeneous, and obtain a Gr\"{o}bner basis for $R_{n, k}$. Then, they interpret the identities by Carlitz, Bagno, and Bagno, Biagioli in Table \ref{tab:euler-mahonian-identities} as the bigraded Hilbert series of $R_{n, 1}$ and $R_{n, k}$, respectively. Finally, in Section 6, they also hint at a relation between $R_{n, 1}$ and the classical \emph{coinvariant algebra}
\begin{equation}\label{equation:coinvariant-algebra}
    R_n \coloneq K[x_1, x_2, \ldots, x_n]/K[x_1, x_2, \ldots, x_n]_+^{S_n},
\end{equation}
where $K[x_1, x_2, \ldots, x_n]_+^{S_n}$ is the ideal generated by all homogeneous invariants of positive degree. Nevertheless, the idea of using coinvariant algebras such as (\ref{equation:coinvariant-algebra}) to study multivariate statistics is much older, and has been used successfully in \cite{adin-brenti-roichman-descent}, for example, where it is credited to Ira Gessel. 
\begin{remark}
    The linear action studied in \cite{braun-olsen-statistics} is exactly the action we study in Subsection \ref{subsection:actions-on-products} in the special case when $P = (\Delta_1)^{\times n}$, and their bigrading is given, in our language, by the projective degree and the weight vector $\bm{a} = (1)^{\times n}$. Moreover, their ideals $J_{k, n}$ are exactly the ideals $I_{1, n, k}$ we will define after our Conjecture \ref{conjecture:regular-sequence}, and their quotients $R_{k, n}$ are the quotients $R_{1, k, n}$.
\end{remark}
\begin{table}[H]
\centering
\small
\begin{tabular}{ | p{15mm} | l | p{32mm} |p{35mm}|}
\hline
\textbf{Graded ring} & \textbf{Identity} & \textbf{Reference} & \textbf{Notes} \\
\hline
$\rule{0pt}{3.5em} R_n$ &
$\begin{aligned} &\sum_{\lambda \vdash n} s_{\lambda}[X] s_{\lambda}\left[ \frac{1}{1-q}\right]
\\
&= \frac{ \sum_{\lambda \vdash n} \sum_{T \in \SYT(\lambda)} q^{\maj(T)} s_\lambda[X]}{\prod_{j=1}^n (1-q^j)}
\end{aligned}$ 
& Lusztig, Stanley & Equivariant version of MacMahon's identity. \\[1.5em]
\hline
$\rule{0em}{3.5em} R_{1, n, 1}$&
$\begin{aligned}
    &\sum_{i \geq 0} t^i \sum_{\lambda \vdash n} s_\lambda \left [ [i + 1]_q \right ] s_\lambda[X] \\
    &= \frac{\sum_{\lambda \vdash n} \sum_{T \in \SYT(\lambda)} t^{\des(T)} q^{\comaj(T)} s_\lambda[X]}{\prod_{j = 0}^n (1 - tq^j)}
\end{aligned}$
& Szendrői~\cite{szendroi-projective}, Raicu, Sam, Weyman~\cite{raicu-sam-weyman-modules}, Theorem~\ref{theorem:equivariant-euler-mahonian} & Equivariant version of Carlitz's identity. \\[.4em]
\hline
$\rule{0em}{3.8em} R_{1, n, k}$ & 
$\begin{aligned}
    &\sum_{r \geq 0} t^r \sum_{\vec{\lambda} \vdash_k n} \prod_{i=0}^{k-1} \prod_{j = 1}^{\ell(\lambda^i)} [r + 1]_{q_{i, j}} \overline{P}_{\vec{\lambda}}[X]/Z_{\vec{\lambda}} \\
    &= \frac{\sum_{\vec{\lambda} \vdash_k n}\sum_{T \in \SYT(\vec{\lambda})} t^{{\wdes}(T)} q^{{\wcomaj}(T)} s_{\vec{\lambda}}[X]}{(1 - t)\prod_{j = 1}^n (1 - t^kq^{kj})}
\end{aligned}$
& Theorem \ref{theorem:equivariant-euler-mahonian} & Equivariant version of Bagno's and Bagno, Biagioli's identities. \\[.4em]
\hline 
$\rule{0em}{3.5em} R_{m, n, 1}$ & $\begin{aligned}
    &\sum_{i \geq 0} t^i \sum_{\lambda \vdash n} s_\lambda \left [ \qbinom{i + m}{m}_q   \right ] s_\lambda[X] \\
    &= \frac{\sum_{\lambda \vdash n} h_{S_n, \lambda}(t, q) s_\lambda[X]}{\prod_{j = 0}^{mn} (1 - tq^j)}
\end{aligned}$ & Proposition~\ref{proposition:equivariant-euler-mahonian-non-explicit},~1. & $h_{S_n, \lambda} \in \NN[t, q]$. The~statistics~are~not necessarily Euler-Mahonian. \\[.4 em]
\hline
$\rule{0em}{2.8em} R_{m, n, k}$ & $\begin{aligned}
    &\Frob\left(K[(\Delta_m)^{\times n}]\right) \\
    &= \frac{\sum_{\vec{\lambda} \vdash_k n} h_{Z_k \wr S_n, \vec{\lambda}}(t, q) s_{\vec{\lambda}}[X]}{(1 - t)\prod_{j = 1}^{mn} (1 - t^kq^{kj})}.
\end{aligned}$ & Proposition~\ref{proposition:equivariant-euler-mahonian-non-explicit},~2. & $h_{Z_k \wr S_n, \vec{\lambda}} \in \NN[t, q]$ (conjecturally for large $k$). The~statistics~are~not necessarily Euler-Mahonian. \\[.4 em]
\hline
\end{tabular}
\caption{Equivariant Euler-Mahonian identities.} \normalsize{Taking the Frobenius characteristic of the graded $S_n$ or $Z_k \wr S_n$-modules in the first column gives the numerators in the identities from the second column.}
\label{tab:equivariant-euler-mahonian-identities}
\end{table}

Since all graded rings in Table \ref{tab:euler-mahonian-identities} admit the structure of graded $S_n$ or $Z_k \wr S_n$-modules, it is natural to take their Frobenius characteristic. We call the resulting formulas \emph{equivariant} Euler-Mahonian identities (see Table \ref{tab:equivariant-euler-mahonian-identities}). For example, the identity for $R_n$ is a classical result by Lusztig, Stanley. In a privately shared preprint \cite{szendroi-projective}, Szendrői notes that the relation between $R_{n, 1}$ and $R_n$ can be exploited to refine this identity, as in Table \ref{tab:equivariant-euler-mahonian-identities}, row 2. For this reason, he refers to $R_{n, 1}$ as the \emph{projective coinvariant algebra}. This was, in fact, the starting point of our work in this section.

Raicu, Sam, Weyman \cite{raicu-sam-weyman-modules} also prove the identity in Table \ref{tab:equivariant-euler-mahonian-identities}, row 2 while studying modules supported on Chow varieties. In the proof of Theorem \ref{theorem:equivariant-euler-mahonian}, we sketch a proof using classical results on $q$-binomial coefficients. It also follows from the identity in Table \ref{tab:equivariant-euler-mahonian-identities}, row 3. 

The last three rows of Table \ref{tab:equivariant-euler-mahonian-identities} are new results we will see later in this section.

\subsection{Products of projective spaces}\label{subsection:products-of-projective-spaces}
We now resume the discussion we started in Subsection \ref{subsection:actions-on-products}. The wreath product action and bigraded algebras we have defined so far exhibit very interesting behaviour. For example, it is evident, from the formulas in Proposition~\ref{proposition:polytope-character} or Corollary \ref{corollary:frobenius-characteristic}, that the characters of these algebras feature negative exponents of $q$. This is also the case in Example \ref{example:cross-polytope}. \\

If $P = \Delta_m \subseteq \mathbb{R}^m$, the $m$-dimensional unit simplex (see Example \ref{example:product-of-unit-simplices}), then the corresponding product variety is $(\mathbb{P}^m)^{\times n}$ with the Segre embedding. Recall that, by a theorem of Hochster \cite{hochster-rings}, the affine semigroup algebra of a (normal) polytope is Cohen-Macaulay. Results in \cite{adeyemo-szendroi-refined} and \cite{braun-olsen-statistics}, along with computational experiments, suggest the following:

\begin{conjecture}\label{conjecture:regular-sequence}
    Let $m, n, k \in \NN_+, P = \Delta_m \subseteq \mathbb{R}^m$, and $\bm{a}' = (1, 2,\ldots, m)$. Then, the elements
    \[
    e^0 = t, \quad e^i_{m, n, k} = \sum_{\substack{x \in K[(\Delta_m)^{\times n}]\\x \text{ monic monomial}\\\pdeg(x) = 1, \cdeg(x) = i}} x^k, i = 1, \ldots, mn
    \]
    form a regular sequence in $K[(\Delta_m)^{\times n}]$ of maximal length. 
\end{conjecture}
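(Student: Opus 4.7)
The product $(\Delta_m)^{\times n}$ is a normal lattice polytope (the unit simplex $\Delta_m$ is normal, and normality is preserved by Cartesian products), so by Hochster's theorem $R := K[(\Delta_m)^{\times n}]$ is Cohen--Macaulay of Krull dimension $mn + 1$, exactly matching the length of the proposed sequence. In a Cohen--Macaulay positively graded $K$-algebra, any homogeneous sequence of positive-degree elements of length equal to the Krull dimension is a regular sequence if and only if it is a homogeneous system of parameters, equivalently iff the quotient is Artinian. The task therefore reduces to showing that
\[
R_{m,n,k} \;:=\; R / \bigl( e^0,\, e^1_{m,n,k},\, \ldots,\, e^{mn}_{m,n,k} \bigr)
\]
is a finite-dimensional $K$-vector space. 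Once that is established, the bigraded Hilbert series of the quotient is automatically
\[
\Hilb(R_{m,n,k})(t,q) \;=\; \Hilb(R)(t,q) \cdot (1-t)\prod_{i=1}^{mn}\bigl(1 - t^k q^{ki}\bigr),
\]
with $\Hilb(R)(t,q) = \sum_{d \geq 0} t^d {d+m \brack m}_q^n$ from Example~\ref{example:product-of-unit-simplices}.

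\textbf{The $k = 1$ case.} For $k = 1$, each $e^i$ is a linear form in the monomial generators, and a direct argument via iterated $q$-Vandermonde shows that the target Hilbert series $\Hilb(R)(t,q) \cdot \prod_{j=0}^{mn}(1 - tq^j)$ is a polynomial with nonnegative integer coefficients (compare Table~\ref{tab:equivariant-euler-mahonian-identities}, row $R_{m,n,1}$). Finite-dimensionality of $R_{m,n,1}$ can then be verified by exhibiting an explicit Gr\"obner basis of the defining ideal, generalizing the construction of Adeyemo--Szendr\H{o}i~\cite{adeyemo-szendroi-refined} for $m = 1$.

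\textbf{Extension to general $k$.} For $k > 1$, the elements $e^i_{m,n,k}$ are sums of $k$-th powers rather than linear forms, so the convenience of linear elimination is lost. Nevertheless, a Gr\"obner-basis approach should remain viable: choose a term order refining $\cdeg$, fix a representative $v^*_i$ of each combinatorial degree $i$, and observe that the initial monomial of $e^i_{m,n,k}$ is the pure power $X_{v^*_i}^k$ (with $e^0 = X_{v^*_0}$ being the unique variable of combinatorial degree $0$). Combining these initial monomials with a Gr\"obner basis of the toric ideal of $(\Delta_m)^{\times n}$ should yield an Artinian monomial ideal in the ambient polynomial ring, whose Hilbert series matches the target expression above.

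\textbf{The main obstacle.} The principal difficulty is the combinatorial control of the initial ideal of the toric ideal of $(\Delta_m)^{\times n}$ combined with the chosen $X_{v^*_i}^k$. For $m = 1$, the toric ideal of the $n$-cube has a well-understood quadratic Gr\"obner basis, which is what drives the Braun--Olsen~\cite{braun-olsen-statistics} argument for all $k$. For $m > 1$, the toric ideal of the product of simplices is Segre-type and considerably richer; identifying a term order under which the combined initial ideal is cleanly Artinian (with the expected Hilbert series) is the key technical challenge, perhaps approachable by induction on $m$ or $n$. An alternative route is to bypass Gr\"obner bases and match Hilbert series directly via the color-rule decomposition of Theorem~\ref{thm:projectivecharacter}: producing an equivariant Euler--Mahonian statistic enumerating the standard monomials of $R_{m,n,k}$ (cf.\ Table~\ref{tab:equivariant-euler-mahonian-identities}, last row) would simultaneously establish finite-dimensionality and the target Hilbert series identity.
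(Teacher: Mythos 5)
The statement is a conjecture; the paper proves it only in special cases (Proposition~\ref{proposition:regular-sequence}). Your Cohen--Macaulay reduction to Artinianity is sound and matches the paper's framing, but you never actually establish Artinianity, and the route you sketch for the case the paper does prove is different from the paper's. For $k=1$ and general $m, n$ the paper does not use Gr\"obner bases or Hilbert series manipulations at all: it constructs an explicit $K$-algebra isomorphism $\varphi: K[(\Delta_1)^{\times mn}]^{S_{(m^n)}} \xrightarrow{\sim} K[(\Delta_m)^{\times n}]$, where $S_{(m^n)}$ is the Young subgroup of $S_{mn}$, notes that the known regular sequence in $K[(\Delta_1)^{\times mn}]$ (the $m=1$ base case, quoted from Adeyemo--Szendr\H{o}i) consists of full $S_{mn}$-invariants and hence lands in the Young-subgroup invariant subalgebra, and transports it through $\varphi$ onto the claimed sequence $e^i_{m,n,1}$. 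The step you propose instead — generalizing the Adeyemo--Szendr\H{o}i Gr\"obner basis to $m>1$ — is precisely the hard combinatorics you yourself flag as the main obstacle, it is not carried out, and the paper's invariant-theoretic reduction sidesteps it entirely.

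There is also a logical gap in the $q$-Vandermonde remark: showing that $\Hilb(R)(t,q)\cdot\prod_{j=0}^{mn}(1-tq^j)$ is a polynomial with nonnegative integer coefficients is a \emph{consequence} of regularity (this is essentially Proposition~\ref{proposition:equivariant-euler-mahonian-non-explicit}, point 1, which the paper derives \emph{after} establishing the regular sequence), not a proof of it, since a non-regular homogeneous sequence of the same degrees could still produce a nonnegative polynomial in that formal expression. To conclude regularity you need Artinianity of the quotient from an independent argument, which your sketch defers. For $m=1$ and general $k$, your citation of Braun--Olsen matches what the paper does; for $m>1$ and $k>1$ both you and the paper remain at the level of a plan plus computational verification.
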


Define $I_{m, n, k}$ to be the ideal generated by the regular sequence in Conjecture \ref{conjecture:regular-sequence}. Note that $I_{m, n, k}$ is invariant under the linear action of $Z_k \wr S_n$ and homogeneous with respect to the bigrading. Define
\[
R_{m, n, k} \coloneq K[P^{\times n}] / I_{m, n, k}
\]
for the corresponding quotient. If Conjecture \ref{conjecture:regular-sequence} holds, $R_{m, n, k}$ is a finite-dimensional $K$-vector space.

The statement and idea of the proof of point 2 in the following proposition was hinted to us by Balázs Szendrői.

\begin{proposition}\label{proposition:regular-sequence}
    Conjecture \ref{conjecture:regular-sequence} is true in the following cases:
    \begin{enumerate}
        \item $k = m  = 1 \text{ and } n \in \NN_+$;
        \item $k = 1 \text{ and } m, n \in \NN_+$;
        \item $m = 1 \text{ and } k, n \in \NN_+$;
        \item \begin{enumerate}[i.]
            \item $m = 2$ and $n, k \leq 10$;
            \item $m = 3$, $n = 2$ and $k \leq 7$;
            \item $m = 4$, $n = 2$ and $k \leq 3$.
        \end{enumerate}
    \end{enumerate}
\end{proposition}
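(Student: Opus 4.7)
The plan rests on Hochster's theorem: since $\Delta_m$ is a unimodular lattice simplex and Cartesian products of normal lattice polytopes are normal, the affine semigroup algebra $K[(\Delta_m)^{\times n}]$ is Cohen--Macaulay of Krull dimension $mn+1$. In any graded Cohen--Macaulay ring of Krull dimension $d$, a homogeneous sequence of length $d$ is a regular sequence if and only if it is a homogeneous system of parameters, equivalently if and only if the quotient by the ideal it generates is finite-dimensional as a $K$-vector space. Since the sequence in Conjecture~\ref{conjecture:regular-sequence} has length exactly $mn+1$, in each asserted case I only need to check that the common zero locus of $e^0$ and $e^1_{m,n,k}, \ldots, e^{mn}_{m,n,k}$ in $\Proj K[(\Delta_m)^{\times n}] \cong (\PP^m)^n$ is empty.

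For case 2 ($k = 1$, arbitrary $m, n$) I package all the equations into a single generating function. Writing $[z_{\ell,0}:z_{\ell,1}:\cdots:z_{\ell,m}]$ for the homogeneous coordinates on the $\ell$-th factor of $(\PP^m)^n$ and using the weight vector $\bm{a}' = (1, 2, \ldots, m)$, the Segre embedding identifies the sequence $e^0, e^1_{m,n,1}, \ldots, e^{mn}_{m,n,1}$ with the coefficients of $q^0, q^1, \ldots, q^{mn}$ in
\[
F(q) = \prod_{\ell=1}^n \bigl(z_{\ell,0} + q z_{\ell,1} + q^2 z_{\ell,2} + \cdots + q^m z_{\ell,m}\bigr).
\]
Simultaneous vanishing of all of them at a point $p = (p_1, \ldots, p_n) \in (\PP^m)^n$ forces $F(q) \equiv 0$, and hence one of the linear factors to be identically zero, which contradicts $p_\ell \in \PP^m$. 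Case 1 is the specialisation $m = 1$.

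Case 3 ($m = 1$, arbitrary $k, n$) is precisely the main regular-sequence construction of Braun--Olsen~\cite{braun-olsen-statistics}; their ideal $J_{k,n}$ coincides with our $I_{1,n,k}$ as noted in the remark following Conjecture~\ref{conjecture:regular-sequence}, and they establish regularity via an explicit Gröbner basis argument that simultaneously identifies the Hilbert series of the quotient with a $q$-analogue of an Eulerian polynomial (Table~\ref{tab:euler-mahonian-identities}). Finally, case 4 is a direct computational verification: for each of the finite ranges of $(m, n, k)$ indicated, one computes in Macaulay2 or Singular a Gröbner basis of $I_{m,n,k}$ inside the toric presentation $T_P/I_P$ and checks that the quotient has finite $K$-dimension; the Cohen--Macaulay reduction above then upgrades this to regularity.

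The main obstacle I anticipate is adapting the clean generating-function argument of case 2 to general $k$. For $k \geq 2$, the $e^i_{m,n,k}$ with $i \geq 1$ are encoded as the higher coefficients of $F_k(q) = \prod_\ell (z_{\ell,0}^k + q z_{\ell,1}^k + \cdots + q^m z_{\ell,m}^k)$, but $e^0 = t$ corresponds to $\prod_\ell z_{\ell,0}$ rather than to the constant term $\prod_\ell z_{\ell,0}^k$ of $F_k(q)$. Since $a^k = 0 \Rightarrow a = 0$ over $K = \CC$, this mismatch need not be fatal and one still expects emptiness of the zero locus by the same mechanism, but carefully executing the bookkeeping in the mixed-degree setting (where $e^0$ has projective degree $1$ while the other $e^i_{m,n,k}$ have projective degree $k$) is the step that would need genuine care to turn into a uniform proof subsuming case 4.
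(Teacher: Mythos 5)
Your proof is correct, and for case~2 you take a genuinely different route from the paper. The paper proves case~2 by constructing an explicit $K$-algebra isomorphism
\[
\varphi \colon K[(\Delta_1)^{\times mn}]^{S_{(m^n)}} \xrightarrow{\ \sim\ } K[(\Delta_m)^{\times n}],
\]
where $S_{(m^n)}$ is the Young subgroup $S_m \times \cdots \times S_m \le S_{mn}$, and then transferring the regular sequence from case~1 (which, being fully $S_{mn}$-invariant, lies in the invariant subring and maps under $\varphi$ to the sequence $e^i_{m,n,1}$). You instead invoke the Cohen--Macaulay equivalence (a homogeneous sequence of length equal to the Krull dimension is regular iff the quotient is finite-dimensional iff the common zero locus in $\Proj$ is empty) and then dispatch the emptiness by one generating-function computation: the sequence $e^0, e^1_{m,n,1},\ldots,e^{mn}_{m,n,1}$ is precisely the full list of $q$-coefficients of $F(q)=\prod_{\ell}(z_{\ell,0}+qz_{\ell,1}+\cdots+q^m z_{\ell,m})$, and simultaneous vanishing of all coefficients forces some factor, hence some $p_\ell$, to be identically zero. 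This is shorter and more elementary than the paper's argument, it makes case~1 an immediate specialization rather than a separate input, and it does not require verifying that $e^i_{1,mn,1}$ remains a regular sequence in the invariant subring (a step the paper passes over lightly). What the paper's isomorphism buys, and your argument does not, is a structural identification of $K[(\Delta_m)^{\times n}]$ with a ring of invariants of $K[(\Delta_1)^{\times mn}]$, which is of independent interest. Your treatment of cases~1, 3, and~4 coincides with the paper's (specialization, Braun--Olsen~\cite{braun-olsen-statistics}, and Macaulay2, respectively), and your closing remarks correctly diagnose why the generating-function trick does not extend uniformly to $k\ge 2$: for $k\ge 2$ the element $e^0=t$ has projective degree $1$ while the remaining $e^i_{m,n,k}$ have projective degree $k$, so they are no longer the coefficients of a single product of $k$-th-power linear forms.
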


\begin{proof}
\hfill
    \begin{enumerate}
        \item For a geometric proof, see \cite{adeyemo-szendroi-refined}, Proposition 2.7. Alternatively, this point follows from point 3, and we will see it is equivalent to point 2.
        
        \item We will give an isomorphism
        \[
        \varphi: K[(\Delta_1)^{\times mn}]^{S_{(m^n)}} \xrightarrow{\sim} K[(\Delta_m)^{\times n}].
        \]
        Here
        \[
        S_{(m^n)} = S_{I_1} \times S_{I_2} \times \cdots \times S_{I_n}
        \]
        is the standard \emph{Young subgroup} of $S_{mn}$, where, for $i \in \{1, \ldots, n\}$,
        \[
        I_i \coloneq \{(i - 1)m+1, (i - 1)m + 2, \ldots, im\}.
        \]
        
        We now describe the isomorphism $\varphi$. The relevant algebras are
        \begin{align*}
            K[(\Delta_1)^{\times mn}]^{S_{(m^n)}} &= K\left [t\prod_{i \in I}x_{i, 1}: \quad I \subseteq \{1,\dots,mn\} \right]^{S_{(m^n)}} \\
            &\subseteq K\left [t\prod_{i \in I}x_{i, 1}: \quad I \subseteq \{1,\dots,mn\} \right] \\
            &\subseteq K[t, x_{1,1},x_{2,1},\ldots,x_{mn, 1}]
        \end{align*}
        and
        \begin{align*}
        K[(\Delta_m)^{\times n}] &= K \left [ t \prod_{i \in I} x_{i, j_i}: \quad I \subseteq \{1, \ldots, n\}, j_i \in \{1, \ldots, m\} \text{ for all } i \in I \right ] \\
        &\subseteq K[t,x_{1, 1},x_{1,2},\ldots,x_{1,m},x_{2,1},x_{2,2},\ldots,x_{2,m},\ldots,x_{n,m}]
        \end{align*}
        (see Example \ref{example:product-of-unit-simplices}). We then see that $K[(\Delta_1)^{\times mn}]^{S_{(m^n)}}$ is generated as a $K$-algebra by elements of the form
        \[
        x(j_1, j_2, \ldots, j_n) = t\left ( \prod_{i = 1}^n\sum_{\substack{J_i \subseteq I_i\\|J_i| = j_i}}\prod_{k \in J_i} x_{k,1} \right ),
        \]
        where $0 \leq j_i \leq |I_i| = m$ for all $i = 1, \ldots, n$. The isomorphism $\varphi$ is now clear:
        \[
        x(j_1, j_2 \ldots, j_n) \mapsto t\prod_{i = 1}^nx_{i,j_i}.
        \]
        It is not hard, although a bit tedious, to show that the relations are the same on both sides. 
        
        By point 1, there is a regular sequence of invariants in $K[(\Delta_1)^{\times mn}]$:
        \[
        (t, t(x_{1,1} + x_{2,1} + \cdots + x_{mn,1}), t(x_{1,1}x_{2,1} + x_{1,1}x_{3,1} + \cdots x_{mn - 1,1}x_{mn,1}), \ldots, tx_{1,1}x_{2,1} \cdots x_{mn,1}).
        \]
        
        Since $S_{(m^n)} \subseteq S_{mn}$, this regular sequence is already in $K[(\Delta_1)^{\times mn}]^{S_{(m^n)}}$. Explicitly, for $i = 1, \dots, mn$, the elements of this regular sequence can be written as
        \[
        e_{1,mn,1}^i = \sum_{\lambda \in C_{m, n}(i)}x(\lambda),
        \]
        where $C_{m, n}(i)$ is the set of weak compositions of $i$ whose Young diagram is contained in the rectangle $[0, m] \times [0, n]$. Then, for $i = 1,\dots, mn$,
        \[
        \varphi(e_{1,mn,1}^i) = e_{m, n, 1}^i.
        \]
        The claim follows.
        
        \item This follows from \cite{braun-olsen-statistics}, proof of Theorem 4.10.
        \item These cases were verified using Macaulay2.
    \end{enumerate}
\end{proof}

We see that the inclusion $I_{m, n, k} \subseteq K[(\Delta_m)^{\times n}]_+^{Z_k \wr S_n}$ is proper when $m \geq 2$, as the following example shows.

\begin{example}\label{example:too-many-invariants}
    Let $m = n = 2$. By Example \ref{example:product-of-unit-simplices},
    \[
    K[(\Delta_2)^{\times 2}] = K\begin{bmatrix}
    tx_{2,2} & tx_{1,1}x_{2,2} & tx_{1, 2}x_{2,2}\\
    tx_{2,1} & tx_{1,1}x_{2,1} & tx_{1, 2}x_{2,1} \\
    t & tx_{1,1} & tx_{1, 2} & \\
    \end{bmatrix} \subseteq K[t, x_{1, 1}, \ldots, x_{2, 2}].
    \]
    Note that $S_2$ acts on the generators of $K[(\Delta_2)^{\times 2}]$ by reflecting along the diagonal going from southwest to northeast.
    
    The regular sequence for $k = 1$ is 
    \[
    (t, \hphantom{X} tx_{1, 1} + tx_{2, 1}, \hphantom{X} tx_{1, 2} + tx_{1, 1}x_{2, 1} + tx_{2, 2}, \hphantom{X} tx_{1, 2}x_{2, 1} + tx_{1, 1}x_{2, 2}, \hphantom{X} tx_{1, 2}x_{2, 2}).
    \]
    We see that the monomial $tx_{1,1}x_{2,1} \in K[(\Delta_2)^{\times 2}]^{S_2}_+$ but $tx_{1,1}x_{2,1} \not\in I_{2, 2, 1}$.
\end{example}

Even though $K[(\Delta_m)^{\times n}]$ may have too many invariants, studying $R_{m, n, k}$ is still very fruitful. Indeed, if $R$ is a Cohen-Macaulay (bi)graded $K$-algebra, and $e_0, e_1, \ldots, e_N$ is a homogeneous regular sequence of maximal length, it is a well-known fact that
\[
R \cong R/(e_0, e_1, \ldots, e_N) \otimes K[e_0, e_1, \ldots, e_N] 
\]
as (bi)graded $K$-vector spaces. In particular, if Conjecture \ref{conjecture:regular-sequence} holds,
\[
K[(\Delta_m)^{\times n}] \cong R_{m, n, k} \otimes K[e^0, e_{m, n, k}^1, e_{m, n, k}^2, \dots, e_{m, n, k}^{mn}]
\]
as (bi)graded $K$-vector spaces. This can be seen as an analogue of Chevalley's theorem for reflection groups. Since $I_{m, n, k}$ is invariant and homogeneous, this is also an isomorphism of bigraded $Z_k \wr S_n$-modules. The Frobenius characteristic then satisfies
\begin{equation}\label{align:frobenius-characteristic-of-cohen-macaulay-algebra}
    \Frob(K[(\Delta_m)^{\times n}]) =\frac{\Frob(R_{m, n, k})}{(1 - t) \prod_{i = 1}^{mn} (1 - t^kq^{ki})}.
\end{equation}

We will use this to give a geometric interpretation to equivariant Euler-Mahonian identities. This discussion, together with Corollary \ref{corollary:frobenius-characteristic}, also imply the following:

\begin{proposition}\label{proposition:equivariant-euler-mahonian-non-explicit}
    Let $P = \Delta_m \subseteq \RR^m$ and $\bm{a}' = (1, 2, \ldots, m)$. Then:
\begin{enumerate}
    \item If $k = 1$ and $m, n \in \NN_+$, there exists a polynomial $h_{S_n, \lambda}(t, q) \in \NN[t, q]$ such that the Frobenius characteristic of $K[(\Delta_m)^{\times n}]$ is given by
    \[
        \sum_{i \geq 0} t^i \sum_{\lambda \vdash n} s_\lambda \left [ \qbinom{i + m}{m}_q   \right ] s_\lambda[X] = \frac{\sum_{\lambda \vdash n} h_{S_n, \lambda}(t, q) s_\lambda[X]}{\prod_{j = 0}^{mn} (1 - tq^j)}.
    \]
\item In general, if Conjecture~\ref{conjecture:regular-sequence} is true, there exists a polynomial $h_{Z_k \wr S_n, \vec{\lambda}}(t, q) \in \mathbb{N}[t, q]$ such that
    \[
        \Frob\left(K[(\Delta_m)^{\times n}]\right)= \frac{\sum_{\vec{\lambda} \vdash_k n} h_{Z_k \wr S_n, \vec{\lambda}}(t, q) s_{\vec{\lambda}}(X)}{(1 - t)\prod_{j = 1}^{mn} (1 - t^kq^{kj})}.
    \]
\end{enumerate}
\end{proposition}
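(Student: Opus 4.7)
The plan is to reduce both parts of the statement to facts already established in the excerpt, with equation~\eqref{align:frobenius-characteristic-of-cohen-macaulay-algebra} doing the heavy lifting. For part~(1), I would first invoke Proposition~\ref{proposition:regular-sequence}, point~2, which unconditionally verifies Conjecture~\ref{conjecture:regular-sequence} when $k = 1$; for part~(2) I would simply work under the hypothesis of that conjecture. In either case the Cohen-Macaulay decomposition recorded in~\eqref{align:frobenius-characteristic-of-cohen-macaulay-algebra} gives
\[
\Frob\!\left(K[(\Delta_m)^{\times n}]\right) \cdot (1 - t)\prod_{j = 1}^{mn}(1 - t^k q^{kj}) \;=\; \Frob(R_{m, n, k}),
\]
which for $k=1$ reduces to $\Frob(K[(\Delta_m)^{\times n}]) \cdot \prod_{j = 0}^{mn}(1 - tq^{j}) = \Frob(R_{m, n, 1})$ after absorbing the factor $(1-t)$.

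Next I would argue that $R_{m, n, k}$ is finite-dimensional as a $K$-vector space: $K[(\Delta_m)^{\times n}]$ has Krull dimension $mn + 1$, the regular sequence generating $I_{m, n, k}$ has exactly that length, and so the quotient is Artinian. Since the sequence is $Z_k \wr S_n$-stable and bihomogeneous, $R_{m, n, k}$ inherits the structure of a finite-dimensional bigraded $Z_k \wr S_n$-module, whose bigraded Frobenius characteristic is therefore a polynomial in $t$ and $q$ expanding as
\[
\Frob(R_{m, n, k}) \;=\; \sum_{\vec{\lambda} \vdash_k n} h_{Z_k \wr S_n, \vec{\lambda}}(t, q) \, s_{\vec{\lambda}}[X],
\]
with each $h_{Z_k \wr S_n, \vec{\lambda}}(t, q) \in \NN[t, q]$ literally recording the $\chi^{\vec{\lambda}}$-multiplicity of the corresponding isotypic component in each bidegree. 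Specialising to $k = 1$ gives the decomposition $\sum_{\lambda \vdash n} h_{S_n, \lambda}(t, q) s_\lambda[X]$ required for part~(1).

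To finish part~(1), I would identify the left-hand side of the claimed identity by invoking Corollary~\ref{corollary:frobenius-characteristic} together with the explicit refined Ehrhart polynomial $L^{(1, 2, \ldots, m)}_{\Delta_m, i}(q) = {i + m \brack m}_q$ recorded in Example~\ref{example:product-of-unit-simplices}. This yields
\[
\Frob\!\left(K[(\Delta_m)^{\times n}]\right) \;=\; \sum_{i \geq 0} t^i \sum_{\lambda \vdash n} s_\lambda\!\left[{i + m \brack m}_q\right] s_\lambda[X]
\]
on the left side of the identity, and combining with the displayed equations above completes both parts.

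There is no serious obstacle here, since the work was done in the exposition leading up to the statement; the only point that really deserves explicit verification is the Artinian property of $R_{m, n, k}$, which I have sketched above, and nonnegativity of the coefficients $h_{\bullet, \bullet}(t, q)$ is then automatic because these are multiplicities in a genuine finite-dimensional bigraded representation.
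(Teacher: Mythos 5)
Your argument is correct and matches the paper's (unwritten-out) reasoning exactly: equation~\eqref{align:frobenius-characteristic-of-cohen-macaulay-algebra} together with Proposition~\ref{proposition:regular-sequence} (for unconditional part~1) or Conjecture~\ref{conjecture:regular-sequence} (for part~2) produces the denominator, finite-dimensionality of the bihomogeneous invariant quotient $R_{m,n,k}$ makes $\Frob(R_{m,n,k})$ a polynomial in $t,q$ with nonnegative Schur-coefficients, and Corollary~\ref{corollary:frobenius-characteristic} with Example~\ref{example:product-of-unit-simplices} identifies the left-hand side in part~1. The paper dispatches the proposition in a sentence (``This discussion, together with Corollary~\ref{corollary:frobenius-characteristic}, also imply the following''); you have simply supplied the details of that discussion.
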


If $m = 1$, we can use our theory of color rules to give an explicit combinatorial formula for $h_{Z_k \wr S_n, \vec{\lambda}}(t, q)$. The case $m = 1, k = 1$ will already be very interesting. To proceed, we must define the statistics involved in this formula.

\begin{definition}[Wreath statistics for tableaux]\label{definition:wreath_statistics_tableaux}
    Fix a sequence of partitions $\vec{\gamma} \vdash_k n$ (drawn corner to corner as in Figure \ref{fig:partitions}). For $T\in \SYT(\vec{\gamma})$, let $W(T)$ be the tableau one gets from $T$ by the following procedure: replace the label $i$ in $T$ by $a_i$, where
    \begin{enumerate}
        \item if the label $1$ is in $\gamma^{r}$, we set $a_1 = r$. Then,
        \item suppose we have just replaced the label $i$ in $T$, located in $\gamma^r$. Suppose $i+1$ is in $\gamma^s$. 
        \begin{enumerate}
        \item If $i+1$ is strictly to the right of $i$, then we set $a_{i+1} = a_i + s-r.$
        \item If $i+1$ is weakly to the left of $i$, then set $a_{i+1} = a_i + k + s-r.$
        \end{enumerate}
    \end{enumerate}
    Now define ${\wcomaj}(T) = |W(T)|$ and ${\wdes}(T) = \max(W(T))$ is the maximal label in $W(T)$. See Figure \ref{fig:wreathstats} for an example.
\end{definition}
\begin{figure}[h]
    \centering
    \begin{tikzpicture}[scale = 1/2, baseline = 10ex]
 \draw [color = black!60, thick] (0,4) -- (1,4);
\draw [color = black!60, thick] (1,3) -- (1,4);
\draw [color = black!60, thick] (2,3) -- (2,4);
\draw [color = black!60, thick] (4,1) -- (6,1);
\draw [color = black!60, thick] (5,0) -- (5,2);
\draw [black, thick] (0,3) -- (3,3)  -- (3,2)  -- (4,2) -- (4, 0) -- (6,0) -- (6,2) --(4,2) -- (4,3) -- (3,3) -- (3,4) -- (1,4) -- (1,5) -- (0,5) -- cycle;
\node at (3.5,2.5) {$\emptyset$};
\node at (-2.5,2.5) {$T=$};
\node at (.5,3.5) {$1$};
\node at (1.5,3.5) {$2$};
\node at (4.5,.5) {$3$};
\node at (4.5,1.5) {$4$};
\node at (.5,4.5) {$5$};
\node at (2.5,3.5) {$6$};
\node at (5.5,.5) {$7$};
\node at (5.5,1.5) {$8$};
\node at (7.5,2.5) {\Large$\mapsto$};
\end{tikzpicture}
\begin{tikzpicture}[scale = 1/2, baseline = 10ex]
 \draw [color = black!60, thick] (0,4) -- (1,4);
\draw [color = black!60, thick] (1,3) -- (1,4);
\draw [color = black!60, thick] (2,3) -- (2,4);
\draw [color = black!60, thick] (4,1) -- (6,1);
\draw [color = black!60, thick] (5,0) -- (5,2);
\draw [black, thick] (0,3) -- (3,3)  -- (3,2)  -- (4,2) -- (4, 0) -- (6,0) -- (6,2) --(4,2) -- (4,3) -- (3,3) -- (3,4) -- (1,4) -- (1,5) -- (0,5) -- cycle;
\node at (3.5,2.5) {$\emptyset$};
\node at (-2.5,2.5) {$W(T)=$};
\node at (.5,3.5) {$0$};
\node at (1.5,3.5) {$0$};
\node at (4.5,.5) {$2$};
\node at (4.5,1.5) {$5$};
\node at (.5,4.5) {$6$};
\node at (2.5,3.5) {$6$};
\node at (5.5,.5) {$8$};
\node at (5.5,1.5) {$11$};
\end{tikzpicture}
    \caption{The standard tableau $T$ and image $W(T)$ give $\wcomaj(T) = |W(T)| = 0+0+2+5+6+6+8+11$ and $\wdes(T) = \max(W(T)) = 11.$}
    \label{fig:wreathstats}
\end{figure}

This notion of wreath comajor index and number of wreath descents can also be defined for elements in $Z_k \wr S_n$.

\begin{definition}[Wreath statistics for colored permutations]\label{definition:wreath_statistics_permutations}
The {\it wreath descent set} of an element $\sigma = u_{a_1} \sigma_1 \cdots u_{a_n} \sigma_n$ is an ordered list of sets
$\wDes(\sigma) = (D^1,\dots, D^{k})$
constructed in the following way:
\begin{enumerate}
    \item If $a_{i+1} - a_i = r \mod k$, with $r\in \{1,\dots, k-1\}$, then we say $i$ is an $r$-descent and $i \in D^r$.
    \item If $a_i = a_{i+1}$ and $\sigma_i > \sigma_{i+1}$, then we say that $i$ is a $k$-descent, and $i \in D^k$.
\end{enumerate}
The {\it wreath comajor index} and the number of {\it wreath descents} is given by
\begin{align*}
\wcomaj(\sigma) = 
\sum_{r=0}^{k} \sum_{i\in D^r} r \cdot (n-i) && \text{ and } &&
\wdes(\sigma) = \sum_{r=0}^{k} |D^r| \cdot r.
\end{align*}
\end{definition}

\begin{remark}
    One can define an RSK algorithm
    \[
    \sigma \in Z_k \wr S_n \mapsto 
    (P(\sigma),Q(\sigma))\in \cup_{\vec{\gamma}\vdash_k n} \SYT(\vec{\gamma})^{\times 2}
    \]
    by taking $\sigma = u_{a_1} \sigma_1 \cdots u_{a_n} \sigma_n$ and applying the usual bumping algorithm, only that the index $\sigma_i$ is inserted into $\gamma^{a_i}$. The recording tableau $Q(\sigma)$ then has the property that $\wcomaj(\sigma) = \wcomaj(Q(\sigma))$ and $\wdes(\sigma) = \wdes(Q(\sigma)).$
\end{remark}

\begin{theorem}\label{theorem:equivariant-euler-mahonian}
    Let $P = \Delta_1 = [0, 1] \subseteq \RR^1$ and $\bm{a}' = (1)$. Then, for all $k, n \in \NN_+$, the bigraded Frobenius characteristic of $K[(\Delta_1)^{\times n}]$ is given by
\begin{equation}\label{equation:equivariant-euler-mahonian-3}
   \sum_{r \geq 0} t^r  \sum_{\vec{\lambda} \vdash_k n} \prod_{i=0}^{k-1} \prod_{j = 1}^{\ell(\lambda^i)} [r + 1]_{q_{i, j}} \overline{P}_{\vec{\lambda}}[X]/Z_{\vec{\lambda}} = \frac{\sum_{\vec{\lambda} \vdash_k n}\sum_{T \in \SYT(\vec{\lambda})} t^{{\wdes}(T)} q^{{\wcomaj}(T)} s_{\vec{\lambda}}[X]}{(1 - t)\prod_{j = 1}^n (1 - t^kq^{kj})},
\end{equation}
where $q_{i, j} = u_i q^{\lambda(\sigma)_j^i}$ and $[n]_q = 1 + q + \cdots + q^{n - 1}$ is the $q$-integer. \\
        
In particular, for $k=1$, 
\begin{equation}\label{equation:equivariant-euler-mahonian-1}
            \sum_{i \geq 0} t^i \sum_{\lambda \vdash n} s_\lambda \left [ [i + 1]_q \right ] s_\lambda[X] = \frac{\sum_{\lambda \vdash n} \sum_{T \in \SYT(\lambda)} t^{\des(T)} q^{\comaj(T)} s_\lambda[X]}{\prod_{j = 0}^n (1 - tq^j)}.
        \end{equation}
These identities can be interpreted geometrically in view of Equation \ref{align:frobenius-characteristic-of-cohen-macaulay-algebra}.
\end{theorem}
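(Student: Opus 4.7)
\emph{Proof plan.} The plan is to compute the Schur expansion of $\Frob(K[(\Delta_1)^{\times n}])$ coefficient-by-coefficient through the color-rule machinery from Section~\ref{section:color-rules}, and then match the resulting generating function to the right-hand side via a standardization bijection.

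First, when $P = \Delta_1$, the lattice points of $dP$ are $\{0,1,\ldots,d\}$ with $|v| = v$ and $\bm{a}'\cdot v = v$, so Proposition~\ref{proposition:polytope-character} specializes to
\[
\chi^{K[(\Delta_1)^{\times n}]}(\sigma) = \sum_{d \geq 0} t^d \prod_{i=0}^{k-1}\prod_{j=1}^{\ell(\lambda^i)} [d+1]_{u_i q^{\lambda^i_j}},
\]
where $\vec{\lambda}$ is the cycle type of $\sigma$. Applying the Frobenius map produces exactly the left-hand side of~(\ref{equation:equivariant-euler-mahonian-3}). For the right-hand side, Theorem~\ref{thm:projectivecharacter} identifies the Schur coefficient at $s_{\vec{\gamma}}[X]$ as
\[
\sum_{d\ge 0} t^d \sum_{S \in \SSYT_k(\vec{\gamma},F_d)} q^{|S|},
\]
where $F_d = [\, v^n : v \in \{0,\ldots,d\}\,]$ with valuation $p(v)=v$ and weight $\rho(v)=q^v$, and $|S|$ denotes the sum of all entries in the $k$-tuple $S$.

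Next, I would interchange the summation order using $\sum_{d \geq \max S} t^d = t^{\max S}/(1-t)$, reducing the task to proving
\[
\sum_{S} q^{|S|} t^{\max S} \;=\; \frac{\sum_{T\in \SYT(\vec{\gamma})} t^{\wdes(T)} q^{\wcomaj(T)}}{\prod_{l=1}^n (1 - t^k q^{kl})},
\]
where $S$ ranges over all $k$-tuples of semistandard tableaux with entries in the appropriate residue classes and no upper bound. The key bijection sends such an $S$ to a pair $(T,(b_0,\ldots,b_{n-1}))$ with $T \in \SYT(\vec{\gamma})$ and $b_j \in \NN$: because the residue of each entry determines its component $\gamma^r$, sorting all entries of $S$ produces an unambiguous standardization $T = \stand(S)$ together with the weakly increasing reading-order sequence $c_1 \le c_2 \le \cdots \le c_n$ of values.

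The crux is to identify the minimum sequence compatible with a given $T$: at each step, if label $i$ sits in $\gamma^r$ and label $i+1$ sits in $\gamma^s$, strictness $c_i < c_{i+1}$ is forced unless $i+1$ is strictly to the right of $i$ within the same component. Matching these constraints to the recursion in Definition~\ref{definition:wreath_statistics_tableaux} shows that the minimum values $(a_1,\ldots,a_n)$ are precisely the entries of $W(T)$. Writing $c_i = a_i + k(b_0 + b_1 + \cdots + b_{i-1})$ with free $b_j \ge 0$ then gives
\[
|S| \;=\; \wcomaj(T) + k\sum_{j=0}^{n-1}(n-j)\,b_j, \qquad \max S \;=\; \wdes(T) + k\sum_{j=0}^{n-1} b_j.
\]
Evaluating the geometric series $\sum_{b_j \ge 0} q^{k(n-j)b_j} t^{kb_j} = \prod_{l=1}^n (1 - t^k q^{kl})^{-1}$ and combining with the $1/(1-t)$ factor recovers the full denominator $(1-t)\prod_{j=1}^n(1-t^k q^{kj})$ in~(\ref{equation:equivariant-euler-mahonian-3}). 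The $k=1$ specialization~(\ref{equation:equivariant-euler-mahonian-1}) is then immediate, since $\wdes$ reduces to $\des$, $\wcomaj$ to $\comaj$, and the denominator collapses to $\prod_{j=0}^n(1-tq^j)$. I expect the main obstacle to be the careful bookkeeping in verifying that the minimum sequence really matches $W(T)$: one must check that the two cases of the recursion (same-component right vs.\ weakly-left or cross-component) correctly capture both the forced strictness and the residue jump dictated by $(s-r) \bmod k$.
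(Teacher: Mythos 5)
Your proof is correct and follows essentially the same route as the paper: both reduce to writing $\sum_{S \in \SSYT_k(\vec{\gamma}, \mathbb{N})} t^{\max S} q^{|S|}$ as a sum over $\SYT(\vec{\gamma})$ divided by $\prod_{l=1}^n(1-t^kq^{kl})$, with $W(T)$ identified as the minimal filling compatible with a given standardization $T$. The only packaging difference is that you parametrize the fiber over $T$ directly by free increments $(b_0,\ldots,b_{n-1})$, whereas the paper reaches the same decomposition by iteratively peeling off each factor $(1-t^kq^{ki})$ via the injections $\varphi_i$ (which add $k$ to the $i$ largest entries); the two constructions are equivalent up to the reindexing $\varphi_{n-j}^{b_j}$.
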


\begin{proof}
We give the proof for the general case in Subsection~\ref{subsection:proof-of-theorem-5-X}. \\

However, the consequence for $k=1$ can be proven on its own with more classical methods (see Subsection \ref{subsection:euler-mahonian-identities} for a historical discussion). Indeed, note that
\[
L_{\Delta_1, i}^{(1)}(q) = 1+ q + \cdots + q^i = \frac{1 - q^{i + 1}}{1 - q} = [i + 1]_q.
\]
Then, the left-hand side of Equation~\eqref{equation:equivariant-euler-mahonian-1}
is $\Frob ((\Delta_1)^{\times n})$ in Corollary \ref{corollary:frobenius-characteristic}. To obtain the right hand side, use \cite{stanley-enumerative-2}, Proposition 7.9.12, and standard properties of the $q$-binomial coefficient. 
\end{proof}

\begin{remark}
    The wreath statistics in Theorem \ref{theorem:equivariant-euler-mahonian} are related to the negative and flag statistics in \cite{bagno-euler}, \cite{bagno-biagioli-colored}, \cite{braun-olsen-statistics}, since after projecting the equivariant formulas to the non-equivariant ones we see that their joint distribution must be the same.
\end{remark}

\subsection{Proof of Theorem~\ref{theorem:equivariant-euler-mahonian}}\label{subsection:proof-of-theorem-5-X}

By Proposition~\ref{proposition:polytope-character}, the left-hand side of Equation~\ref{equation:equivariant-euler-mahonian-3} gives
    \[
    \Frob(\chi^{K[(\Delta_1)^{\times n}]}) = 
    \frac{1}{n! k^n}
    \sum_{\sigma \in Z_k \wr S_n} \chi^{K[(\Delta_1)^{\times n}]}(\sigma)
\overline{P}_{\vec{\lambda}(\sigma)}[X].
    \]
Since $d \Delta_1 \cap \mathbb{Z} =\{0,\dots,d\}$, by Theorem~\ref{thm:projectivecharacter}, we know that
\[
    \frac{1}{n! k^n}
    \sum_{\sigma \in Z_k \wr S_n} \chi^{K[(\Delta_1)^{\times n}]}(\sigma)
\overline{P}_{\vec{\lambda}(\sigma)}[X]
=
\sum_{d \geq 0} t^d \sum_{T \in \SSYT_k(\vec{\gamma},\mathbb{N}_d)} \rho(T),
    \]
where $\SSYT_k(\vec{\gamma},\mathbb{N}_d)$ is the set of semistandard tableaux with entries no greater than $d$, and $i$ appears in $\gamma^j$ only if $i = j \mod k$. These tableaux have a $q$-weight given by the sum of elements in $T$,
$
\rho(T) = |T|.
$
Let
\[
\max T = \max\{ \beta_i: \text{ $\beta$ appears in $T$} \}.
\]

We first note that 
\[
\sum_{d \geq 0} t^d \sum_{T \in \SSYT_k(\vec{\gamma},\mathbb{N}_d)} 
q^{|T|} = \frac{1}{1-t} \sum_{T \in \SSYT_k(\vec{\gamma},\mathbb{N})} t^{\max T} q^{|T|}.
\]
Let 
$R_{n} = \sum_{T \in \SSYT_k(\vec{\gamma},\mathbb{N})} t^{\max T} q^{|T|}.$
For $i = n,n-1,\dots, 1$, we set
\begin{equation} \label{equation:Ri}
 R_{i-1} = (1-t^kq^{ki}) R_{i} .
\end{equation}
We will construct a sequence of sets of tableaux $U_i$ for which 
\[
R_i = \sum_{T \in U_i} t^{\max T} q^{|T|}.
\]
To this end, for each $i$, we define an injection
\[
\varphi_i: U_i \rightarrow U_i
\]
with the property that 
\begin{itemize}
    \item $|\varphi(T)| = ki+|T|,$ and 
    \item $\max \varphi(T) = k + \max T$,
\end{itemize}
meaning that Equation~\ref{equation:Ri} is satisfied if we set
\[
U_{i-1} = U_i \setminus  \varphi_i(U_i).
\]
In the end, we will have found that
\[
\prod_{i=1}^n (1-t^kq^{ki}) R_n = \sum_{T \in U_0} t^{\max T} q^{|T|}.
\]

For a given semistandard tableau $T$, we let $\stand(T)$ be the standardization of $T$ as described in Section~\ref{section:Preliminaries}.
Let $c_i=c_i(T)$ be the column in which the label $i$ is located in $\stand(T)$; and let $a_i = a_i(T)$ be the corresponding label in $T$.
In this notation, columns $1$ to $\gamma^1_1$ are columns of $\gamma^1$; columns $\gamma^1_1+1,\dots, \gamma^1_1+\gamma^2_1$ are columns of $\gamma^2$; and so on. We will then say that the column $c_i$ is in $\gamma^j$ if the label $i$ of $\stand(T)$ is in $\gamma^j$.
\\

Define $\varphi_i(T)$ to be the resulting tableau one gets by adding $k$ to the labels in $T$ corresponding to the labels $n,n-1,\dots, n-i+1$ of $\stand(T)$. 
Note that $\varphi(U_n)$ is the set of all tableaux in $U_n$ whose minimal label is at least $k$. This means
$U_{n-1}$ consists of all  tableaux in $U_n$ with the following property:
\begin{center}
if $c_1$ is in $\gamma^i$, then $a_i = i$. 
\end{center}
In general, the set $U_{n-j-1}$ is attained from $U_{n-j}$ by adding an additional condition: suppose $c_j$ is in $\gamma^{r}$ and $c_{j+1}$ is in $\gamma^{s}$.
\begin{itemize}
    \item If $c_j<c_{j+1}$ and $r=s$, then $a_{j+1} = a_j+{s-r}.$
    \item If $c_j \geq c_{j+1}$, then $a_{j+1} = a_j + k + s - r $.
\end{itemize}
The set $U_0$ satisfies these two conditions for each label $j$ in $\stand(T)$. Thus, each $S \in \SYT(\vec{\gamma})$ corresponds to a unique element $W(T) \in U_0$, where the map $W$ is defined in Definition~\ref{definition:wreath_statistics_tableaux}.
\\

We have found that, for a given $n,k$ and $\vec{\gamma} \vdash_k n$, 
\[
(1-t)\left(\prod_{j=1}^n 1-t^k q^{kj} \right) \sum_{d \geq 0} t^d \sum_{T \in \SSYT_{k}(\vec{\gamma},\mathbb{N}_d)} q^{|T|} t^{\max(T)}
=
\sum_{T \in \SYT(\vec{\gamma})} q^{\wcomaj(T)} t^{\wdes(T)}.
\]

This proves the result.

\section{Proof of the main result}\label{section:main_proof}
Here, we prove the main result regarding class functions computed by color rules, Theorem~\ref{theorem:mainresult}.
\subsection{The combinatorial objects}
We start with a class function $\chi$ of $Z_k \wr S_n$ computed by a color rule $F=[f_1^{m_1},f_2^{m_2},\dots]$ with value and weight functions $p$ and $\rho$, as in Definition~\ref{definition:color_rule}. Let $\gamma \vdash_k n$. The multiplicity of $\chi^{\vec{\gamma}}$ in $\chi$ is given by the inner product
\[
\langle \chi, \chi^{\vec{\gamma}} \rangle = \frac{1}{|Z_k \wr S_n|} \sum_{\sigma \in Z_k \wr S_n} \chi(\sigma) \overline{\chi}^{\vec{\gamma}}(\sigma).
\]

We will show that 
\[
\sum_{\sigma \in Z_k \wr S_n} \chi(\sigma) \overline{\chi}^{\vec{\gamma}}(\sigma) = n! k^n \sum_{T \in \SSYT_k(\vec{\gamma}, F)} \rho(T),
\]
as in Definition~\ref{definition:SSTYk}. We start by constructing a set of combinatorial objects $\mathcal{P}_{\vec{\gamma},F}$ and a weight function $w$ for which
\[
\sum_{P \in \mathcal{P}_{\vec{\gamma},F}} \weight(P) = \sum_{\sigma \in Z_k \wr S_n} 
\chi(\sigma) \overline{\chi}^{\vec{\gamma}}(\sigma).
\]

The set $\mathcal{P}_{\vec{\gamma},F}$ is created by the following procedure. One can follow Example~\ref{example:objects} to see how the steps create these objects:

\begin{enumerate}
\item Write $\sigma$ in cycle notation.
\item Over each cycle, choose a color $f_i$ and place it over each index in the cycle so that the resulting coloring satisfies the color rule.

\item Rearrange the cycles first in increasing order by color, so that cycles colored by $f_j$ appear to the right of cycles colored by $f_i$ when $i<j$.
Then, for cycles with the same color, rearrange in deacreasing cycle order (as described in Section~\ref{section:Preliminaries}). 

Let $(c_1,\dots, c_l)$ be the cycles from {\emph{ right}} to {\emph{left}} with lengths given by 
$\alpha = (\alpha_1,\dots,\alpha_l)$ and C-types given by $a =(a_1,\dots,a_l)$; and let $(g_1,\dots, g_l)$ be the colors over these respective cycles.

\item Pick a rim hook tableau $T$ of shape $\vec{\gamma}$ and type $(\alpha,a).$

\item If $\zeta_1, \dots, \zeta_l$ are the rim hooks in $T$ in the order in which they were placed, write over $\zeta_j$ by placing the first index of $c_j$ and its root of unity in the first cell of $\zeta_j$; place the second index of $c_j$ and its root of unity in the second cell of $\zeta_j$; and so on.  Similarly, include the colors so that each cell in $\zeta_j$ also contains the color $g_j$. Let $P$ be the resulting tableau.

\item Suppose $u_{r_i} i$ appears in $\gamma^{b_i}$ with color $f_{s_i}$ over it. Let $d(T)$ be the total number of South steps in all of the rim hooks appearing in $T$. Define
\[
\weight(P) \coloneq (-1)^{d(T)} \prod_{i=1}^n (u_{r_i})^{p(f_{s_i})-b_i} \rho(f_{s_i}).
\]
\end{enumerate}
 We call $\mathcal{P}_{\vec{\gamma},F}$ the collection of rim hook tableau fillings with colors in $F$. By construction, we have the following.
 \begin{proposition} \label{proposition:combiantorialexpansion}
For any $\vec{\gamma} \vdash_k n$ and color rule $F$,
\[
\sum_{P \in \mathcal{P}_{\vec{\gamma},F}}\weight(P) =  \sum_{\sigma \in Z_k \wr S_n} 
\chi(\sigma) \overline{\chi}^{\vec{\gamma}}(\sigma).
\]
 \end{proposition}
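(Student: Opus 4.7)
The plan is to prove this proposition by unpacking both factors of $\chi(\sigma)\overline{\chi}^{\vec{\gamma}}(\sigma)$ and matching the resulting triple sum term-by-term with $\mathcal{P}_{\vec{\gamma},F}$ via the construction given in Steps (1)--(6).

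First I would use Definition~\ref{definition:color_rule} to expand
\[
\chi(\sigma) = \sum_{(f,\sigma) \in F(Z_k\wr S_n)} \prod_{i=1}^n u(\sigma,i)^{p(f_{r_i})} \rho(f_{r_i}),
\]
and observe that, since every index $i$ in a cycle $c$ receives the same color $f_c$, the contribution of $c$ factors as $u(c)^{p(f_c)}\rho(f_c)^{|c|}$, where $u(c)$ is its C-type. Next, for each fixed $(\sigma,f)$ I would apply the wreath Murnaghan--Nakayama rule with cycles listed in the specific order prescribed in Step (3) of the construction (by color, then by decreasing cycle order), which is a legitimate choice since the rule holds for any fixed ordering. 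Taking the complex conjugate (and noting that $\overline{\Psi^j(u)}=u^{-j}$ while signs of rim hooks are real) gives
\[
\overline{\chi}^{\vec{\gamma}}(\sigma) = \sum_{T\in\RHT_{\vec{\gamma},(\alpha,a)}} \prod_{\zeta_l} u(c_l)^{-j_l}\sign(\zeta_l),
\]
where $\zeta_l$ is the rim hook assigned to the cycle $c_l$ (of C-type $u(c_l)$) and $j_l$ is the index of the subpartition $\gamma^{j_l}$ into which $\zeta_l$ was inserted.

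Multiplying these two expansions, the product $\chi(\sigma)\overline{\chi}^{\vec{\gamma}}(\sigma)$ becomes a sum over triples $(\sigma,f,T)$, each contributing
\[
\prod_{\text{cycles }c} u(c)^{p(f_c)-j_c} \rho(f_c)^{|c|} \sign(\zeta_c).
\]
I would then describe the bijection between such triples and $\mathcal{P}_{\vec{\gamma},F}$: given $P$, the underlying rim hook tableau $T$ is recovered by forgetting colors and weighted indices; the coloring $f$ is read off from the colors in each cell; and $\sigma$ is reconstructed from the indices and roots of unity inside each rim hook, together with the fact that each rim hook records exactly one cycle in its prescribed order. Conversely, the construction of Steps (1)--(5) produces precisely one $P$ from each triple.

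Finally, I would verify the weight match by distributing the cycle-level product over individual cells. For a cycle $c$, using $u(c) = \prod_{i\in c} u_{r_i}$ gives $u(c)^{p(f_c)-j_c} = \prod_{i\in c} u_{r_i}^{p(f_c)-j_c}$, and $\rho(f_c)^{|c|} = \prod_{i\in c}\rho(f_{s_i})$; multiplying the signs across all rim hooks yields $\prod_c \sign(\zeta_c)=(-1)^{d(T)}$ since the total number of south steps is additive. This reassembles exactly to
\[
\weight(P) = (-1)^{d(T)} \prod_{i=1}^n u_{r_i}^{p(f_{s_i})-b_i} \rho(f_{s_i}),
\]
as in Step (6) of the construction, proving the identity. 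The main technical care needed, rather than any deep obstacle, is the bookkeeping in the bijection: ensuring that the ordering of cycles prescribed in Step (3) is consistent with the order in which rim hooks are placed in Step (4), so that cycle $c_l$ really does correspond to rim hook $\zeta_l$ and the exponents $p(f_c)-j_c$ are attached to the correct roots of unity.
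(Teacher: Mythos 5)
Your proof is correct and is essentially the same as the paper's, which simply asserts the identity ``by construction'' after defining $\mathcal{P}_{\vec{\gamma},F}$ through Steps (1)--(6); you have spelled out the bookkeeping — factoring the color-rule sum cycle by cycle, conjugating the Murnaghan--Nakayama rule, observing that each rim hook lies entirely in one $\gamma^{j}$ so $b_i = j_c$ for every $i$ in the cycle filling $\zeta_c$, and redistributing the cycle-level weight over cells — that the paper leaves implicit. The only small refinement worth noting is that the cycle ordering in Step (3) depends on the coloring $f$ as well as on $\sigma$, so one is really applying the MN rule with a different (but still legitimate) fixed ordering for each summand $(\sigma,f)$; you do flag this and it is indeed harmless since the character value is ordering-independent.
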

 \begin{example}\label{example:objects}
As an example of this construction, for the case when $n = 8$, $k = 3$, and $\vec{\gamma} = ((2,1),(2,2),(1))$,
consider the element 
\begin{equation*}
(1)(2~ u_16~ 4) (u_2 3)  (u_15 )(u_2 7 ~ u_1 8),
\end{equation*}

We choose where to place the colors from the color rule $[f_1^6,f_2,f_3]$. One such possibility is 
\begin{center}
\begin{tikzpicture}[scale = 3/4]
\node at (.75,0) {$(1)$};
\node at (4.5,0) {$(u_2 3)$};
\node at (2.5,0) {$(2 ~ u_1 6~ 4)$};
\node at (6,0) {$(u_1 5)$};
\node at (8,0) {$(u_2 7 ~ u_1 8)$};
\node at (.75,.6) {$f_3$};
\node at (4.5,.6) {$f_1$};
\node at (2.5,.6) {$f_1$};
\node at (2.5+.7,.6) {$f_1$};
\node at (2.5-.7,.6) {$f_1$};
\node at (6,.6) {$f_2$};
\node at (8.5,.6) {$f_1$};
\node at (8-.5,.6) {$f_1$};
\end{tikzpicture}.
\end{center}

We then rearrange the cycles so colors are increasing left to right, and the minimal indices in cycles with the same color are decreasing from left to right:
\begin{center}
\begin{tikzpicture}[scale = 3/4]
\node at (2.5,0) {$(u_2 3)$};
\node at (4.5,0) {$(2 ~ u_1 6~ 4)$};
\node at (.5,0) {$(u_2 7 ~ u_1 8)$};
\node at (6.5,0) {$(u_15)$};
\node at (8,0) {$(1)$};
\node at (2.5,.6) {$f_1$};
\node at (8,.6) {$f_3$};
\node at (4.5,.6) {$f_1$};
\node at (4.5+.7,.6) {$f_1$};
\node at (4.5-.7,.6) {$f_1$};
\node at (6.5,.6) {$f_2$};
\node at (1,.6) {$f_1$};
\node at (0,.6) {$f_1$};
\end{tikzpicture}.
\end{center}

We now pick a rim hook tableau of shape $((2,1),(2,2),(1))$ whose rim hooks have lengths given by these cycles read right to left: $(1,1,3,1,2)$. Say we pick

\begin{align*}
\begin{tikzpicture}[scale = 1/2, baseline = 10ex]
\draw [color = black!60, thick] (0,5) -- (1,5);
\draw [color = black!60, thick] (1,4) -- (1,5);
\draw [color = black!60, thick] (2,3) -- (4,3);
\draw [color = black!60, thick] (3,2) -- (3,4);
\draw[color=black!75, thick] (.5, 4.5) -- (1.5,4.5);
\fill[color = black, thick] (.5,4.5) circle (.5ex);
\fill[color = black, thick] (1.5,4.5) circle (.5ex);
\fill[color = black, thick] (.5,5.5) circle (.5ex);
\draw[color=black!75, thick] (2.5, 3.5) -- (3.5, 3.5) -- (3.5,2.5);
\fill[color = black, thick] (2.5,3.5) circle (.5ex);
\fill[color = black, thick] (3.5,2.5) circle (.5ex);
\fill[color = black, thick] (4.5,1.5) circle (.5ex);
\fill[color = black, thick] (2.5,2.5) circle (.5ex);
\node at (.75,5.25) {\scriptsize{$2$}};
\node at (3.75,2.25) {\scriptsize{$3$}};
\node at (1.75,4.25) {\scriptsize{$5$}};
\node at (4.75, 1.25) {\scriptsize{$1$}};
\node at (2.75, 2.25) {\scriptsize{$4$}};
\draw [black, thick] (0,4) -- (2,4)  -- (2,2) -- (4,2) -- (4,1) -- (5,1) -- (5,2) --(4,2) -- (4,4) -- (2,4) -- (2,5) --(1,5) -- (1,6) -- (0,6) -- cycle;
\end{tikzpicture}
\end{align*}

Finish by filling in the shape by the entries in the cycles to obtain

\begin{align*}
\begin{tikzpicture}[scale = 1, baseline = 10ex]
\draw [color = black!60, thick] (0,5) -- (1,5);
\draw [color = black!60, thick] (1,4) -- (1,5);
\draw [color = black!60, thick] (2,3) -- (4,3);
\draw [color = black!60, thick] (3,2) -- (3,4);
\draw[color=black!75, thick] (.5, 4.5-.15) -- (1.5,4.5-.15);
\fill[color = black, thick] (.5,4.5-.15) circle (.5ex);
\fill[color = black, thick] (1.5,4.5-.15) circle (.5ex);
\fill[color = black, thick] (.5,5.5-.15) circle (.5ex);
\draw[color=black!75, thick] (2.5, 3.5-.15) -- (3.5, 3.5-.15) -- (3.5,2.5-.15);
\fill[color = black, thick] (2.5,3.5-.15) circle (.5ex);
\fill[color = black, thick] (3.5,2.5-.15) circle (.5ex);
\fill[color = black, thick] (4.5,1.5-.15) circle (.5ex);
\fill[color = black, thick] (2.5,2.5-.15) circle (.5ex);
\node [draw,fill=white,inner sep=1pt] (A) at (.5,5.35) {$u_1 5$};
\node [scale = .8] (P) at (.5-.15,5.75) {$f_2$};
\node [draw,fill=white,inner sep=1pt] (B) at (.5,4.35) {$u_2 7$};
\node [scale = .8] (O) at (.5-.15,4.75) {$f_1$};
\node [draw,fill=white,inner sep=1pt] (C) at (1.5,4.35) {$u_1 8$};
\node [scale = .8] (N) at (1.5-.15,4.75) {$f_1$};
\node [draw,fill=white,inner sep=1pt] (D) at (2.5,3.35) {$2$};
\node [scale = .8] (M) at (2.5-.15,3.75) {$f_1$};
\node [draw,fill=white,inner sep=1pt] (E) at (3.5,3.35) {$u_1 6$};
\node [scale = .8] (L) at (3.5-.15,3.75) {$f_1$};
\node [draw,fill=white,inner sep=1pt] (F) at (3.5,2.35) {$4$};
\node [scale = .8] (K) at (3.5-.15,2.75) {$f_1$};
\node [draw,fill=white,inner sep=1pt] (G) at (4.5,1.35) {$1$};
\node [scale = .8] (H) at (4.5-.15,1.75) {$f_3$};
\node [draw,fill=white,inner sep=1pt] (I) at (2.5,2.35) {$u_2 3$};
\node [scale = .8] (J) at (2.5-.15,2.75) {$f_1$};
\node [scale = 1] (P) at (-1.5,3.25) {$P = $};
\draw [black, thick] (0,4) -- (2,4)  -- (2,2) -- (4,2) -- (4,1) -- (5,1) -- (5,2) --(4,2) -- (4,4) -- (2,4) -- (2,5) --(1,5) -- (1,6) -- (0,6) -- cycle;
\end{tikzpicture}
\end{align*}

We calculate the weights as follows:
\begin{align*}
\weight(P) & = (-1)^1 ( u_0^{p(f_3)-2} \rho(f_3)) ( u_0^{p(f_1)-1} \rho(f_1)) ( u_2^{p(f_1)-1} \rho(f_1)) \\
& ~~~~~\times ( u_0^{p(f_1)-1} \rho(f_1)) ( u_1^{p(f_2)-0} \rho(f_2)) ( u_1^{p(f_1)-1} \rho(f_1)) ( u_2^{p(f_1)-0} \rho(f_1)) ( u_1^{p(f_1)-0} \rho(f_1)).
\end{align*}
To make it easier to follow, we first start with the sign given by the number of South steps in the rim hooks. There is only one, giving $(-1)^1$. Then, for each $i=1,\dots, 8$ (in this order), we write $u_{r_i}^{p(f_{s_i}-b_i)} \rho(f_{s_i})$, where $u_{r_i} i$ appears in $\gamma^{b_i}$ with color $f_{s_i}$. For instance, since $u_1 6$ appears in $\gamma^1$ with color $f_1$, we get $u_1^{p(f_1)-1}\rho(f_1)$.
\end{example}

\subsection{A weight-preserving, sign-reversing involution and a map of order k}

To evaluate the first sum in Proposition \ref{proposition:combiantorialexpansion}, we must eliminate all non-integral and non-positive weights. To do this, we must first use a weight-preserving, sign-reversing involution to eliminate a certain possibility for our objects' fillings. We will then employ another fact of the cyclic group to eliminate a broader range of possibilities. To facilitate our discussion, we will say that a cell is disconnected from another cell if a rim hook between the two cells is severed, and we will say that a cell is connected to another cell if a rim hook from one cell is extended to the other.

We first define a sign-reversing involution $\psi$ on $\mathcal{P}_{\vec{\gamma},F}$.
Examples of the operation $\psi$ can be found in Figures~\ref{figure:involution_example_1} and \ref{figure:involution_example_2}.
For a given $P \in \mathcal{P}_{\vec{\gamma},F}$, define $\psi(P)$
by the following process: 
\begin{enumerate}
\item Scan along the southernmost row, from left to right, until one finds the first cell $c$ such that one of two things hold:
\begin{enumerate}
\item The cell above $c$ is in the same rim hook as $c$ or
\item The cell above $c$ contains the end of a rim hook with the same color in $c$.
\end{enumerate}
\item If no such cell is found in this row, move inductively to the next row up. 
\item If no such cell is found in any row, leave the object fixed.
\end{enumerate}

Suppose we now have such a cell $c$, then for the given cases, we do the following:
\begin{enumerate}[\text{Case }(a):]
\item If $c$ is in the same rim hook as the cell above, then
\begin{enumerate}[1.]
\item Disconnect $c$ from the cell above it.
\item Suppose there is a rim hook $\zeta$ which ends one cell west of $c$ and has the same color as $c$.  If the 
index in $c$ is larger than the smallest index in $\zeta$, then connect $c$ with $\zeta$.
\item Let $\xi$ be the rim hook which now contains $c$.  Read the indices in $\xi$ from right to 
left, looking for the first index $i$ in $\xi$ which is smaller than every index in $\xi$ on its left.  
If such  an index is found, disconnect the index from $\xi$ from the left.  
Iterate this procedure with the remaining portion of $\xi$.  This step ensures that each rim hook
begins with its smallest index.
\end{enumerate}
\end{enumerate}

\begin{enumerate}[\text{Case }(b):]
\item If $c$ is one cell south the end of a rim hook containing the same color, then
\begin{enumerate}[1.]
\item If $c$ is in the same rim hook as the cell to its west, disconnect these two cells.
\item Connect $c$ with the cell above $c$.  
\item Let $\zeta$ be the rim hook which now contains $c$.  Suppose there is a rim hook $\xi$ which 
begins one cell east of the end of $\zeta$.  If the smallest index in $\zeta$ is smaller than the 
smallest index in $\xi$, connect $\zeta$ and $\xi$.   Iterate this process until either there is no
rim hook $\xi$ that begins one cell east of the rim hook containing $c$ or the smallest index 
in $\zeta$ is larger than the smallest index in $\xi$.   
\end{enumerate}
\end{enumerate}

As an example of this operation, consider the tableau filling in Example~\ref{example:objects}. We see in Figure~\ref{figure:involution_example_1} that the first cell $c$ that $\psi$ would locate is the cell containing $4$. We then disconnect from above, and connect to the cell on the left, since $3<4$. Figure~\ref{figure:involution_example_2} gives a more drastic example.

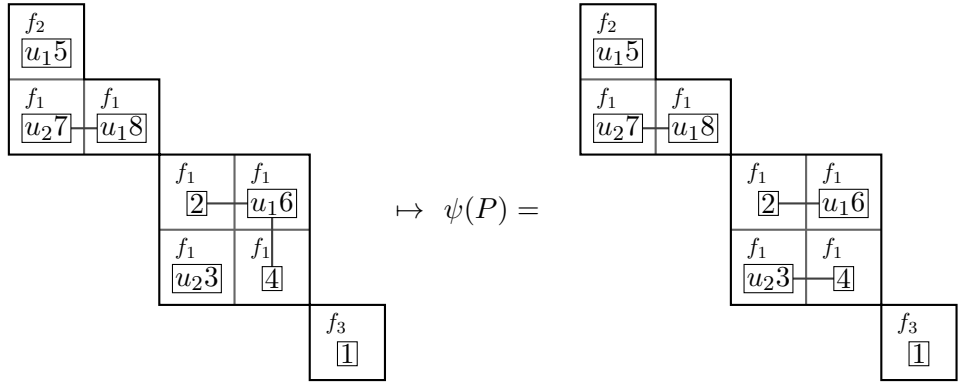
\begin{figure}
\begin{align*}
\begin{tikzpicture}[scale = 1, baseline = 10ex]
\draw [color = black!60, thick] (0,5) -- (1,5);
\draw [color = black!60, thick] (1,4) -- (1,5);
\draw [color = black!60, thick] (2,3) -- (4,3);
\draw [color = black!60, thick] (3,2) -- (3,4);
\draw[color=black!75, thick] (.5, 4.5-.15) -- (1.5,4.5-.15);
\fill[color = black, thick] (.5,4.5-.15) circle (.5ex);
\fill[color = black, thick] (1.5,4.5-.15) circle (.5ex);
\fill[color = black, thick] (.5,5.5-.15) circle (.5ex);
\draw[color=black!75, thick] (2.5, 3.5-.15) -- (3.5, 3.5-.15) -- (3.5,2.5-.15);
\fill[color = black, thick] (2.5,3.5-.15) circle (.5ex);
\fill[color = black, thick] (3.5,2.5-.15) circle (.5ex);
\fill[color = black, thick] (4.5,1.5-.15) circle (.5ex);
\fill[color = black, thick] (2.5,2.5-.15) circle (.5ex);
\node [draw,fill=white,inner sep=1pt] (A) at (.5,5.35) {$u_1 5$};
\node [scale = .8] (P) at (.5-.15,5.75) {$f_2$};
\node [draw,fill=white,inner sep=1pt] (B) at (.5,4.35) {$u_2 7$};
\node [scale = .8] (O) at (.5-.15,4.75) {$f_1$};
\node [draw,fill=white,inner sep=1pt] (C) at (1.5,4.35) {$u_1 8$};
\node [scale = .8] (N) at (1.5-.15,4.75) {$f_1$};
\node [draw,fill=white,inner sep=1pt] (D) at (2.5,3.35) {$2$};
\node [scale = .8] (M) at (2.5-.15,3.75) {$f_1$};
\node [draw,fill=white,inner sep=1pt] (E) at (3.5,3.35) {$u_1 6$};
\node [scale = .8] (L) at (3.5-.15,3.75) {$f_1$};
\node [draw,fill=white,inner sep=1pt] (F) at (3.5,2.35) {$4$};
\node [scale = .8] (K) at (3.5-.15,2.75) {$f_1$};
\node [draw,fill=white,inner sep=1pt] (G) at (4.5,1.35) {$1$};
\node [scale = .8] (H) at (4.5-.15,1.75) {$f_3$};
\node [draw,fill=white,inner sep=1pt] (I) at (2.5,2.35) {$u_2 3$};
\node [scale = .8] (J) at (2.5-.15,2.75) {$f_1$};
\draw [black, thick] (0,4) -- (2,4)  -- (2,2) -- (4,2) -- (4,1) -- (5,1) -- (5,2) --(4,2) -- (4,4) -- (2,4) -- (2,5) --(1,5) -- (1,6) -- (0,6) -- cycle;
\end{tikzpicture}
\begin{tikzpicture}[scale = 1, baseline = 10ex]
\node [scale = 1] (P) at (-1.5,3.25) {$\mapsto ~~ \psi(P) = $};
\draw [color = black!60, thick] (0,5) -- (1,5);
\draw [color = black!60, thick] (1,4) -- (1,5);
\draw [color = black!60, thick] (2,3) -- (4,3);
\draw [color = black!60, thick] (3,2) -- (3,4);
\draw[color=black!75, thick] (.5, 4.5-.15) -- (1.5,4.5-.15);
\fill[color = black, thick] (.5,4.5-.15) circle (.5ex);
\fill[color = black, thick] (1.5,4.5-.15) circle (.5ex);
\fill[color = black, thick] (.5,5.5-.15) circle (.5ex);
\draw[color=black!75, thick] (2.5, 3.5-.15) -- (3.5, 3.5-.15) ;
\draw[color=black!75, thick] (2.5, 2.5-.15) -- (3.5,2.5-.15);
\fill[color = black, thick] (2.5,3.5-.15) circle (.5ex);
\fill[color = black, thick] (3.5,2.5-.15) circle (.5ex);
\fill[color = black, thick] (4.5,1.5-.15) circle (.5ex);
\fill[color = black, thick] (2.5,2.5-.15) circle (.5ex);A
\node [draw,fill=white,inner sep=1pt] (A) at (.5,5.35) {$u_1 5$};
\node [scale = .8] (P) at (.5-.15,5.75) {$f_2$};
\node [draw,fill=white,inner sep=1pt] (B) at (.5,4.35) {$u_2 7$};
\node [scale = .8] (O) at (.5-.15,4.75) {$f_1$};
\node [draw,fill=white,inner sep=1pt] (C) at (1.5,4.35) {$u_1 8$};
\node [scale = .8] (N) at (1.5-.15,4.75) {$f_1$};
\node [draw,fill=white,inner sep=1pt] (D) at (2.5,3.35) {$2$};
\node [scale = .8] (M) at (2.5-.15,3.75) {$f_1$};
\node [draw,fill=white,inner sep=1pt] (E) at (3.5,3.35) {$u_1 6$};
\node [scale = .8] (L) at (3.5-.15,3.75) {$f_1$};
\node [draw,fill=white,inner sep=1pt] (F) at (3.5,2.35) {$4$};
\node [scale = .8] (K) at (3.5-.15,2.75) {$f_1$};
\node [draw,fill=white,inner sep=1pt] (G) at (4.5,1.35) {$1$};
\node [scale = .8] (H) at (4.5-.15,1.75) {$f_3$};
\node [draw,fill=white,inner sep=1pt] (I) at (2.5,2.35) {$u_2 3$};
\node [scale = .8] (J) at (2.5-.15,2.75) {$f_1$};
\draw [black, thick] (0,4) -- (2,4)  -- (2,2) -- (4,2) -- (4,1) -- (5,1) -- (5,2) --(4,2) -- (4,4) -- (2,4) -- (2,5) --(1,5) -- (1,6) -- (0,6) -- cycle;
\end{tikzpicture}
\end{align*}
\caption{The involution $\psi$ on the object in Example~\ref{example:objects}.}
\label{figure:involution_example_1}
\end{figure}

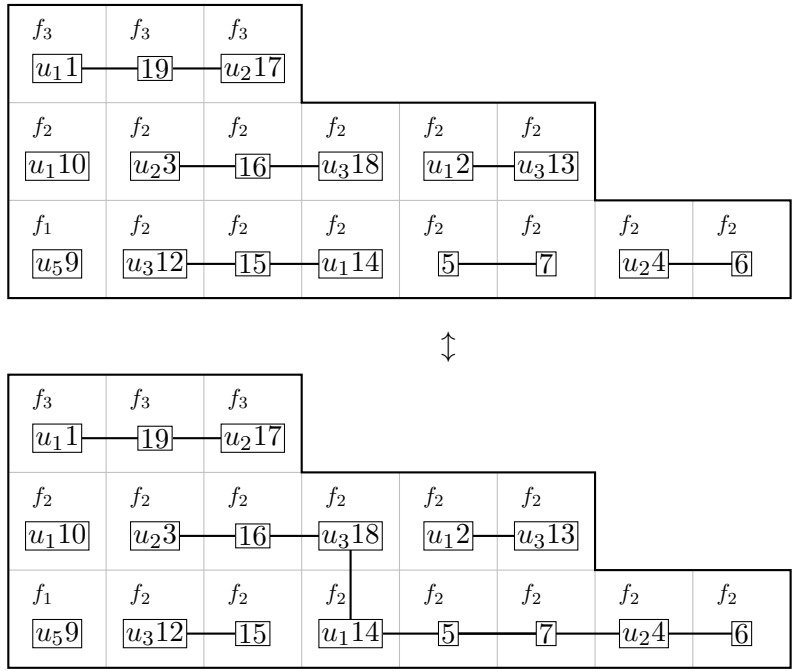
\begin{figure}
\begin{center}
\begin{tikzpicture}[scale = 1.3]
\draw [color = black!25] (1,0) -- (1,3);
\draw [color = black!25] (2,0) -- (2,3);
\draw [color = black!25] (3,0) -- (3,2);
\draw [color = black!25] (4,0) -- (4,2);
\draw [color = black!25] (5,0) -- (5,2);
\draw [color = black!25] (6,0) -- (6,1);
\draw [color = black!25] (7,0) -- (7,1);
\draw [color = black!25] (0,1) -- (6,1);
\draw [color = black!25] (0,2) -- (3,2);

\draw [thick] (0,0) -- (8,0) -- (8,1) -- (6,1) -- (6,2) -- (3,2) -- (3,3) -- (0,3) -- cycle;

\draw [thick] (1.5,1.5-.15) -- (3.5,1.5-.15); 
\draw [thick] (1.5,.5-.15) -- (3.5,.5-.15); 
\draw [thick] (4.5,.5-.15) -- (5.5,.5-.15); 
\draw [thick] (6.5,.5-.15) -- (7.5,.5-.15); 
\draw [thick] (.5,2.5-.15) -- (2.5,2.5-.15); 
\draw [thick] (4.5,1.5-.15) -- (5.5,1.5-.15); 
A
\node [draw,fill=white,inner sep=1pt] at (.5,.5-.15) {$u_5 9$};
\node [scale = .8] at (.5-.15,.75) {$f_1$};
\node [scale = .8] at (1.5-.15,.75) {$f_2$};
\node [scale = .8] at (2.5-.15,.75) {$f_2$};
\node [scale = .8] at (3.5-.15,.75) {$f_2$};
\node [scale = .8] at (4.5-.15,.75) {$f_2$};
\node [scale = .8] at (5.5-.15,.75) {$f_2$};
\node [scale = .8] at (6.5-.15,.75) {$f_2$};
\node [scale = .8] at (7.5-.15,.75) {$f_2$};
\node [scale = .8] at (.5-.15,1.75) {$f_2$};
\node [scale = .8] at (1.5-.15,1.75) {$f_2$};
\node [scale = .8] at (2.5-.15,1.75) {$f_2$};
\node [scale = .8] at (3.5-.15,1.75) {$f_2$};
\node [scale = .8] at (4.5-.15,1.75) {$f_2$};
\node [scale = .8] at (5.5-.15,1.75) {$f_2$};
\node [scale = .8] at (.5-.15,2.75) {$f_3$};
\node [scale = .8] at (1.5-.15,2.75) {$f_3$};
\node [scale = .8] at (2.5-.15,2.75) {$f_3$};
\node [draw,fill=white,inner sep=1pt] at (.5,1.5-.15) {$u_110$};
\node [draw,fill=white,inner sep=1pt] at (1.5,1.5-.15) {$u_23$};
\node [draw,fill=white,inner sep=1pt] at (2.5,1.5-.15) {$16$};
\node [draw,fill=white,inner sep=1pt] at (3.5,1.5-.15) {$u_318$};
\node [draw,fill=white,inner sep=1pt] at (.5,2.5-.15) {$u_11$};
\node [draw,fill=white,inner sep=1pt] at (1.5,2.5-.15) {$19$};
\node [draw,fill=white,inner sep=1pt] at (2.5,2.5-.15) {$u_217$};
\node [draw,fill=white,inner sep=1pt] at (4.5,1.5-.15) {$u_12$};
\node [draw,fill=white,inner sep=1pt] at (5.5,1.5-.15) {$u_313$};

\node [draw,fill=white,inner sep=1pt] at (1.5,.5-.15) {$u_312$};
\node [draw,fill=white,inner sep=1pt] at (2.5,.5-.15) {$15$};
\node [draw,fill=white,inner sep=1pt] at (3.5,.5-.15) {$u_114$};
\node [draw,fill=white,inner sep=1pt] at (4.5,.5-.15) {$5$};
\node [draw,fill=white,inner sep=1pt] at (5.5,.5-.15) {$7$};
\node [draw,fill=white,inner sep=1pt] at (6.5,.5-.15) {$u_24$};
\node [draw,fill=white,inner sep=1pt] at (7.5,.5-.15) {$6$};
\node at (4.5,-.5) {$\updownarrow$};
\end{tikzpicture} 
\\
\begin{tikzpicture}[scale = 1.3]
\draw [color = black!25] (1,0) -- (1,3);
\draw [color = black!25] (2,0) -- (2,3);
\draw [color = black!25] (3,0) -- (3,2);
\draw [color = black!25] (4,0) -- (4,2);
\draw [color = black!25] (5,0) -- (5,2);
\draw [color = black!25] (6,0) -- (6,1);
\draw [color = black!25] (7,0) -- (7,1);
\draw [color = black!25] (0,1) -- (6,1);
\draw [color = black!25] (0,2) -- (3,2);

\draw [thick] (0,0) -- (8,0) -- (8,1) -- (6,1) -- (6,2) -- (3,2) -- (3,3) -- (0,3) -- cycle;

\draw [thick] (1.5,1.5-.15) -- (3.5,1.5-.15) --(3.5,.5-.15) -- (6.5,.5-.15); 
\draw [thick] (1.5,.5-.15) -- (2.5,.5-.15); 
\draw [thick] (4.5,.5-.15) -- (5.5,.5-.15); 
\draw [thick] (6.5,.5-.15) -- (7.5,.5-.15); 
\draw [thick] (.5,2.5-.15) -- (2.5,2.5-.15); 
\draw [thick] (4.5,1.5-.15) -- (5.5,1.5-.15); 

\node [draw,fill=white,inner sep=1pt] at (.5,.5-.15) {$u_5 9$};
\node [scale = .8] at (.5-.15,.75) {$f_1$};
\node [scale = .8] at (1.5-.15,.75) {$f_2$};
\node [scale = .8] at (2.5-.15,.75) {$f_2$};
\node [scale = .8] at (3.5-.15,.75) {$f_2$};
\node [scale = .8] at (4.5-.15,.75) {$f_2$};
\node [scale = .8] at (5.5-.15,.75) {$f_2$};
\node [scale = .8] at (6.5-.15,.75) {$f_2$};
\node [scale = .8] at (7.5-.15,.75) {$f_2$};
\node [scale = .8] at (.5-.15,1.75) {$f_2$};
\node [scale = .8] at (1.5-.15,1.75) {$f_2$};
\node [scale = .8] at (2.5-.15,1.75) {$f_2$};
\node [scale = .8] at (3.5-.15,1.75) {$f_2$};
\node [scale = .8] at (4.5-.15,1.75) {$f_2$};
\node [scale = .8] at (5.5-.15,1.75) {$f_2$};
\node [scale = .8] at (.5-.15,2.75) {$f_3$};
\node [scale = .8] at (1.5-.15,2.75) {$f_3$};
\node [scale = .8] at (2.5-.15,2.75) {$f_3$};
\node [draw,fill=white,inner sep=1pt] at (.5,1.5-.15) {$u_110$};
\node [draw,fill=white,inner sep=1pt] at (1.5,1.5-.15) {$u_23$};
\node [draw,fill=white,inner sep=1pt] at (2.5,1.5-.15) {$16$};
\node [draw,fill=white,inner sep=1pt] at (3.5,1.5-.15) {$u_318$};
\node [draw,fill=white,inner sep=1pt] at (.5,2.5-.15) {$u_11$};
\node [draw,fill=white,inner sep=1pt] at (1.5,2.5-.15) {$19$};
\node [draw,fill=white,inner sep=1pt] at (2.5,2.5-.15) {$u_217$};
\node [draw,fill=white,inner sep=1pt] at (4.5,1.5-.15) {$u_12$};
\node [draw,fill=white,inner sep=1pt] at (5.5,1.5-.15) {$u_313$};

\node [draw,fill=white,inner sep=1pt] at (1.5,.5-.15) {$u_312$};
\node [draw,fill=white,inner sep=1pt] at (2.5,.5-.15) {$15$};
\node [draw,fill=white,inner sep=1pt] at (3.5,.5-.15) {$u_114$};
\node [draw,fill=white,inner sep=1pt] at (4.5,.5-.15) {$5$};
\node [draw,fill=white,inner sep=1pt] at (5.5,.5-.15) {$7$};
\node [draw,fill=white,inner sep=1pt] at (6.5,.5-.15) {$u_24$};
\node [draw,fill=white,inner sep=1pt] at (7.5,.5-.15) {$6$};
\end{tikzpicture} 
\end{center}
\caption{A larger example of the involution $\psi$.}
\label{figure:involution_example_2}
\end{figure}
\begin{proposition}
The map $\psi: \mathcal{P}_{\vec{\gamma},F} \rightarrow \mathcal{P}_{\vec{\gamma},F} $ is well-defined. Furthermore, $\psi$ is an involution, and if $P$ is not fixed, then $\weight(P) = - \weight(\psi(P)).$ 
\end{proposition}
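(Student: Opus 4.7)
The plan is to verify the three claims (well-definedness, involutivity, sign-reversal) in turn, with well-definedness being where almost all the technical content lies. For each, the central idea is that the operation at the cell $c$ is purely local in its effect on the sign, while the auxiliary merging/splitting in the other rim hooks is forced by the rules governing $\mathcal{P}_{\vec{\gamma},F}$ and is entirely symmetric between Cases (a) and (b).

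For well-definedness, I need to confirm four invariants of $\mathcal{P}_{\vec{\gamma},F}$ after applying $\psi$: (i) the rim hook structure of $T$ (contiguous $E$/$S$-paths whose successive removals leave partition shapes), (ii) each rim hook starting with its smallest index, (iii) the color rule (indices in one cycle share a color), and (iv) the reading order (colors increasing from right to left, and, within a color, minimum indices decreasing). Item (i) is automatic, since $\psi$ only toggles a single vertical adjacency at $c$ and never alters the underlying shape or the $E$/$S$ geometry of the other hooks. For item (iii), Case (a) only splits within a single cycle and Case (b) only merges with a rim hook whose cells share $c$'s color, so color classes are preserved. Item (ii) is exactly what the iterated trimming in Case (a) step 3 enforces, while item (iv) is enforced by the stopping condition in Case (b) step 3 (we stop merging eastward as soon as the leading index would no longer be minimal). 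The only nontrivial point here is that the newly created or extended hook in Case (b) does in fact retain its leftmost cell as its start, which follows because the cell $c$, by the scanning rule, was the first row-$c$ position where the local obstruction occurs, so nothing to its west in its new rim hook violates the minimum-index property.

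For the involution property, the key observation is that the scan finds the same cell $c$ in $\psi(P)$ as in $P$, and that Cases (a) and (b) are exchanged at $c$. Indeed, before $c$ in the reading order, nothing has changed, and at $c$ we have toggled exactly the vertical adjacency between $c$ and the cell above: in Case (a) we broke it, leaving the cell above as the end of a same-color rim hook (Case (b) configuration for $\psi(P)$); in Case (b) we formed it, putting $c$ into the same rim hook as the cell above (Case (a) configuration). The auxiliary merging/splitting in the two cases are designed as mutual inverses: the trimming in Case (a) step 3 undoes the eastward merging in Case (b) step 3, and vice versa, because the conditions for when to trim (``the leading index of the next piece is smaller'') and when to merge (``the leading index of $\zeta$ is smaller than that of $\xi$'') are negations of each other along the same boundary. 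This reduces $\psi \circ \psi = \mathrm{id}$ to checking that the same cell $c$ is selected, which is immediate.

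Finally, the sign-reversal is the cheapest part. All factors of $\weight(P)$ other than $(-1)^{d(T)}$ depend only on the data $(b_i, r_i, s_i)$ recording, for each index $i$, which sub-partition $\gamma^{b_i}$ it sits in, its root of unity $u_{r_i}$, and its color $f_{s_i}$; none of these data are altered by $\psi$, since the cells, contents, and colors are unchanged and only rim hook boundaries move. Thus $\weight(\psi(P))/\weight(P) = (-1)^{d(\psi(T)) - d(T)}$, and by construction $d$ changes by exactly one (we toggle a single South step at $c$), so $\weight(\psi(P)) = -\weight(P)$. The principal obstacle throughout is the case analysis for well-definedness: one must check carefully that the iterated trimming in Case (a) and iterated eastward merging in Case (b) terminate with valid reading orders and that the split points match up under iteration, so that $\psi \circ \psi$ truly reverses both the local toggle and the auxiliary rearrangements.
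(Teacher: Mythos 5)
Your proof follows the paper's approach closely, but there is a genuine gap in the sign-reversal argument, and it propagates into your item (i) for well-definedness as well. You assert that $\psi$ ``never alters the underlying shape or the $E$/$S$ geometry of the other hooks'' and, later, that ``by construction $d$ changes by exactly one (we toggle a single South step at $c$).'' Neither claim is automatic. In Case (a) step 3, the iterated trimming disconnects the index at the left-to-right minimum from the rim hook $\xi$; if $\xi$ contains South steps, one of these disconnections could occur at a South step, destroying it and changing $d$ by more than one. It is entirely consistent with $\psi$ being an involution that the trimming in Case (a) step 3 removes a South step and the eastward merging in Case (b) step 3 creates one (as you observe, they are mutual inverses), in which case $d$ would change by an even amount and the weight would \emph{not} flip sign. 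So the ``mutual inverse'' observation, correct as it is, cannot by itself deliver the sign reversal.

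What is missing is the scanning-order argument that the paper supplies: since $c$ is the first cell in the bottom-to-top, left-to-right scan satisfying Case (a) or (b), there can be no South step whose bottom cell precedes $c$ in the scan order (such a cell would itself satisfy Case (a) and would have been found first). This forces the rim hook $\zeta$ ending one cell west of $c$, and the portion of $c$'s original rim hook lying strictly after $c$, to be entirely horizontal. Consequently $\xi$ in Case (a) step 3 has no South steps, so every trim severs an East step and $d$ is unchanged by step 3; in Case (b) step 3 the merge only inserts East steps. Only then does $d(\psi(T)) - d(T) = \pm 1$ follow. Your argument needs this step to close; without it the parenthetical ``we toggle a single South step at $c$'' is an incomplete accounting of how $d$ changes.
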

\begin{proof}
The act of connecting and disconnecting rim hooks ensures that the cycles are inserted in the proper order, meaning that we indeed get a map from $\mathcal{P}_{\vec{\gamma},F}$ to itself. Furthermore, if $c$ is the cell located by the map $\psi$, then Case (a) is sent to Case (b) at $c$, and Case (b) is sent to Case (a) at $c$. This ensures that when we apply $\psi$ to $\psi(P)$, we locate the same cell $c$ to perform the required operation. So $\psi$ is an involution.

Secondly, no color or root of unity is changed, so the only difference between $\psi(P)$ and $P$ will be a sign given by the new underlying rim hook tableau.

Lastly, at most one South step in the rim hooks will be deleted or added by the operation. This is because there cannot be any South steps below $c$, since otherwise $\psi$ would have first located a cell below $c$. So Case (a), point 3 will never delete a South step. For the same reason, Case (b), point 3 never encounters the end of a rim hook to the right of $c$ that ends above a cell with the same color (otherwise $\psi$ would locate this cell first instead of $c$).
If $P$ is not fixed by $\psi$, precisely one South step in a rim hook is either added or removed, which reverses the sign.
\end{proof}
\begin{remark}\label{remark:P_prime_semistandard}
Let $\mathcal{P}_{\vec{\gamma},F}'$ be the fixed points of $\psi$. We first note that the underlying rim hook of any $P \in \mathcal{P}_{\vec{\gamma},F}'$ cannot have a South step, otherwise we would find a cell satisfying Case (a) in $\psi$. Secondly, we see that we cannot have two cells, one on top of the other, with the same color, since either we would have a South step in a rim hook, or there would be the end of a rim hook falling into Case (b) of $\psi$.
This means that the underlying colors form a semistandard tableau. 
\end{remark}
We will now use the following fact:
\begin{lemma} \label{lemma:sumofroots}
For any $s$ and $u_r$, we have
\begin{equation*}
u_r^s + (u_1 u_r)^s + \cdots + (u_1^{k-1} u_r)^s = k ~\delta_{(s = 0 \mod k)}
\end{equation*}
\end{lemma}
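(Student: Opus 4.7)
The plan is to factor out $u_r^s$ and reduce the identity to the classical fact that the sum of all $k$-th roots of unity is zero (unless the exponent is a multiple of $k$, in which case every summand equals $1$).

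More precisely, since $u_r = u_1^r$, I can write
\[
u_r^s + (u_1 u_r)^s + \cdots + (u_1^{k-1} u_r)^s = u_r^s \sum_{j=0}^{k-1} u_1^{js} = u_1^{rs} \sum_{j=0}^{k-1} (u_1^s)^j.
\]
Now I split into two cases. If $s \equiv 0 \pmod k$, then $u_1^s = 1$, so each term in the inner sum is $1$, giving $k$; also $u_1^{rs} = 1$, so the whole expression is $k$. If $s \not\equiv 0 \pmod k$, then $u_1^s \neq 1$ but $(u_1^s)^k = u_1^{ks} = 1$, so by the finite geometric series formula
\[
\sum_{j=0}^{k-1} (u_1^s)^j = \frac{(u_1^s)^k - 1}{u_1^s - 1} = 0,
\]
and the whole expression vanishes. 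Combining the two cases yields the claim.

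There is no real obstacle here; the lemma is a standard orthogonality identity for the characters of the cyclic group $Z_k$, and the only subtlety is remembering to separate the $s \equiv 0 \pmod k$ case so that the denominator $u_1^s - 1$ is nonzero before invoking the geometric series formula.
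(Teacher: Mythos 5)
Your proof is correct and follows essentially the same route as the paper: factor out $u_r^s$, treat the case $s\equiv 0\pmod k$ directly, and otherwise sum the finite geometric series $\sum_{j=0}^{k-1}(u_1^s)^j = (1-(u_1^s)^k)/(1-u_1^s) = 0$. The only difference is cosmetic (you additionally rewrite $u_r^s$ as $u_1^{rs}$), so there is nothing to add.
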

\begin{proof} 
If $s = 0 \mod k$, then we get $u_r^s =1$ for any $r$, and the sum reduces to $k$. Otherwise, we can rewrite the sum as 
\begin{equation*}
u_r^s (u_1^0+u_1^s+\cdots+u_1^{(k-1)s} ) = u_r^s \frac{1-(u_1^{s})^{k}}{1-u_1^s} = 0.
\end{equation*}
\end{proof}

\begin{proposition}
There is a map $\psi':\mathcal{P}_{\vec{\gamma},F}' \rightarrow \mathcal{P}_{\vec{\gamma},F}'$ of order $k$ such that if $P$ is not fixed, then 
$P$, $\psi'(P)$,$\dots$, ${\psi'}^{k-1}(P)$ are all distinct, and
$\weight(P)+\weight(\psi'(P))+\cdots + \weight({\psi'}^{k-1}(P)) = 0$.
\end{proposition}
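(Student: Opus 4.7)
The plan is to exploit the fact that, by Remark~\ref{remark:P_prime_semistandard}, any $P \in \mathcal{P}_{\vec{\gamma},F}'$ has rim hooks with no South steps and colors forming a semistandard filling. In particular the sign $(-1)^{d(T)}$ in its weight is $+1$, and
\[
\weight(P) \;=\; \prod_{i=1}^n u_{r_i}^{p(f_{s_i})-b_i}\,\rho(f_{s_i}).
\]
The objects that should survive to contribute to $\SSYT_k(\vec{\gamma},F)$ in Theorem~\ref{theorem:mainresult} are exactly those in which every cell satisfies $p(f_{s_i}) \equiv b_i \pmod{k}$, so that each $u_{r_i}^{p(f_{s_i})-b_i}=1$; the purpose of $\psi'$ is to cancel the contributions of the remaining $P$'s.

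I would define $\psi'$ as follows. Call a cell $i$ of $P$ \emph{bad} if $p(f_{s_i})\not\equiv b_i \pmod{k}$. If $P$ has no bad cell, set $\psi'(P)=P$. Otherwise let $i^{\ast}=i^{\ast}(P)$ be the smallest index belonging to a bad cell, and let $\psi'(P)$ be obtained from $P$ by replacing the single root of unity $u_{r_{i^{\ast}}}$ at that cell by $u_{1}\,u_{r_{i^{\ast}}}$, while leaving all indices, positions, colors, rim-hook placements, and all other roots of unity unchanged.

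Verifying the required properties is then routine. Since $\psi$ is defined solely in terms of colors, positions, and rim-hook shapes, $\psi'(P)$ is still $\psi$-fixed, so $\psi'$ maps $\mathcal{P}_{\vec{\gamma},F}'$ to itself. The notion of ``bad cell'' depends only on $f_{s_i}$ and $b_i$, neither of which is altered by the twist; hence $i^{\ast}$ is preserved by $\psi'$, and $(\psi')^k$ cycles the root of unity at cell $i^{\ast}$ through all of $Z_k$ before returning to $P$, giving order $k$ and pairwise distinct iterates on non-fixed points. Finally, writing $s = p(f_{s_{i^{\ast}}})-b_{i^{\ast}}$, the weight transforms as $\weight(\psi'(P)) = u_1^{s}\,\weight(P)$, so
\[
\sum_{r=0}^{k-1}\weight\bigl((\psi')^r(P)\bigr) \;=\; \weight(P)\sum_{r=0}^{k-1}(u_1^{s})^{r} \;=\; \weight(P)\sum_{r=0}^{k-1}u_{r}^{s} \;=\; 0
\]
by Lemma~\ref{lemma:sumofroots}, since $s\not\equiv 0\pmod{k}$.

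The main obstacle is really the conceptual observation that it suffices to twist a \emph{single} root of unity at a canonically chosen cell: no cells need to be moved, no rim hooks re-drawn, and no cycle decomposition altered. Once one sees that the defining data $(f_{s_i},b_i)$ of ``badness'' is invariant under this twist, well-definedness, order $k$, distinctness of iterates, and the final weight-cancellation from Lemma~\ref{lemma:sumofroots} all fall out immediately.
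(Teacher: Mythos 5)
Your proof is correct and follows essentially the same approach as the paper: twist a single root of unity at one canonically chosen ``bad'' cell, and invoke Lemma~\ref{lemma:sumofroots} to cancel the orbit. The only cosmetic difference is the canonical choice of bad cell --- the paper scans cells by position (bottom-to-top, left-to-right) whereas you pick the bad cell containing the smallest index --- but since the set of bad cells and the cell contents are both unchanged by the twist, either choice is a well-defined $\psi'$-invariant selection and the argument goes through identically. One small inaccuracy worth flagging: you say ``$\psi$ is defined solely in terms of colors, positions, and rim-hook shapes,'' but $\psi$ also consults the \emph{indices} in the cells (e.g.\ when deciding whether to reconnect rim hooks); the property you actually need, and which does hold, is merely that $\psi$ never consults the roots of unity.
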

\begin{proof}
Given $P \in \mathcal{P}_{\vec{\gamma},F}'$, scan from bottom to top, left to right, for the first cell $c$ in $\gamma^r$ containing a color $f$ such that $p(f) \neq r \mod k$. Suppose this cell contains $u_i j$. Let $\psi'(P)$ be the tableau filling obtained by replacing $u_i$ in $c$ by $u_1 u_i$. This clearly has order $k$ since $u_1^k = 1$.

The weight of the image of this map is given by $\weight(\psi'(P)) = u_1^{p(f)-r} \weight(P)$. So if $p(f)-r \neq 0 \mod k$, 
\begin{align*}
\weight(P)+\weight(\psi'(P))&+\cdots + \weight({\psi'}^{k-1}(P)) \\ & = \weight(P)( 
1+ u_1^{p(f)-r} + u_2^{p(f)-r}+\cdots + u_{k-1}^{p(f)-r}) = 0
\end{align*}
by Lemma \ref{lemma:sumofroots}.
\end{proof}
For a simple example of an orbit of $\psi'$, we have Figure~\ref{figure:orbit_psi_prime}. The sum of weights is displayed below each tableau filling.
\begin{figure}
\begin{align*}
\begin{tikzpicture}[scale = 1, baseline = 10ex]
\draw [color = black!60, thick] (0,1) -- (1,1);
\draw [color = black!60, thick] (1,0) -- (1,1);
\draw [thick] (.5,.5-.15) -- (1.5,.5-.15); 
\node [draw,fill=white,inner sep=1pt] (A) at (.5,.35) {$1$};
\node [draw,fill=white,inner sep=1pt] (B) at (1.5,.35) {$3$};
\node [draw,fill=white,inner sep=1pt] (C) at (.5,1.35) {$u_1 2$};
\node [scale = .8] (D) at (.5,.75) {$f_1$};
\node [scale = .8] (D) at (1.5,.75) {$f_1$};
\node [scale = .8] (D) at (.5,1.75) {$f_2$};
\draw [black, thick] (0,0) -- (2,0) -- (2,1) -- (1,1) -- (1,2) -- (0,2) -- cycle;
\end{tikzpicture}
&&
\begin{tikzpicture}[scale = 1, baseline = 10ex]
\draw [color = black!60, thick] (0,1) -- (1,1);
\draw [color = black!60, thick] (1,0) -- (1,1);
\draw [thick] (.5,.5-.15) -- (1.5,.5-.15); 
\node [draw,fill=white,inner sep=1pt] (A) at (.5,.35) {$u_1 1$};
\node [draw,fill=white,inner sep=1pt] (B) at (1.5,.35) {$3$};
\node [draw,fill=white,inner sep=1pt] (C) at (.5,1.35) {$u_1 2$};
\node [scale = .8] (D) at (.5,.75) {$f_1$};
\node [scale = .8] (D) at (1.5,.75) {$f_1$};
\node [scale = .8] (D) at (.5,1.75) {$f_2$};
\draw [black, thick] (0,0) -- (2,0) -- (2,1) -- (1,1) -- (1,2) -- (0,2) -- cycle;
\end{tikzpicture}
&&
\begin{tikzpicture}[scale = 1, baseline = 10ex]
\draw [color = black!60, thick] (0,1) -- (1,1);
\draw [color = black!60, thick] (1,0) -- (1,1);
\draw [thick] (.5,.5-.15) -- (1.5,.5-.15); 
\node [draw,fill=white,inner sep=1pt] (A) at (.5,.35) {$u_2 1$};
\node [draw,fill=white,inner sep=1pt] (B) at (1.5,.35) {$3$};
\node [draw,fill=white,inner sep=1pt] (C) at (.5,1.35) {$u_1 2$};
\node [scale = .8] (D) at (.5,.75) {$f_1$};
\node [scale = .8] (D) at (1.5,.75) {$f_1$};
\node [scale = .8] (D) at (.5,1.75) {$f_2$};
\draw [black, thick] (0,0) -- (2,0) -- (2,1) -- (1,1) -- (1,2) -- (0,2) -- cycle;
\end{tikzpicture} \\ 
u_2&+& u_2 u_1 & + & u_2 u_1^2& ~~~~~~~~ = u_2+1+u_1 = 0.
\end{align*}
\caption{An orbit of $\psi'$ when $\vec{\lambda} = ((2,1), (0),(0))$, $F=[f_1^2,f_2]$, and $p(f_i) = i$.}
\label{figure:orbit_psi_prime}
\end{figure}
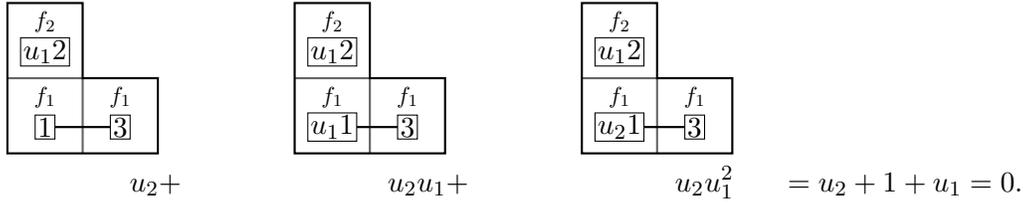

\subsection{The final fixed points}
Let $\mathcal{T}_{\vec{\gamma},F}$ be the fixed points of $\psi'$. 
For $P\in \mathcal{T}_{\vec{\gamma},F}$, let $S(P)$ be the tableau obtained by removing everything except for the colors. From Remark~\ref{remark:P_prime_semistandard},
since $\mathcal{T}_{\vec{\gamma},F} \subseteq \mathcal{P}_{\vec{\gamma},F}'$, we must have that $S(P)$ is a semistandard tableau in the colors of $F$. Since $P$ is fixed by $\psi'$, we must have that if the color $f_i$ appears in $\gamma^r$, then $p(f_i) = r \mod k$. All together, this means that 
$S(P) \in \SSYT_k(\vec{\gamma},F).$ We also have that $\weight(P)$ only depends on the colors, so $\weight(P) = \rho(P)$ as described in Theorem \ref{theorem:mainresult}.

Given $T \in \SSYT_k(\vec{\gamma},F)$, we now want to construct all elements $P \in \mathcal{T}_{\vec{\gamma},F}$ such that $S(P) =T$. But this can be done in a very simple way: 

Choose any $\sigma = u_{a_1} \sigma_1 \cdots u_{a_n} \sigma_n$ written in one-line notation. Read the cells of $T$ from top to bottom, left to right, and place $u_{a_1} \sigma_1$ in the first cell, $u_{a_2} \sigma_2$ in the second cell, and so on.
Now for each row in which the color $f_i$ appears consecutively, there is a unique way to connect this sequence of cells colored by $f_i$ with rim hooks so that we have a valid element of $\mathcal{P}_{\vec{\gamma},F}$. This is given precisely by the decreasing cycle order, as in Remark~\ref{remark:decreasing_cycle_bijection}, so that we only connect $c$ to its left if its index is not a left-to-right minimum in this sequence of cells. In the end, we find that
\begin{proposition}
For any $\vec{\gamma} \vdash_k n$ and color rule $F$, 
\[
\sum_{P \in \mathcal{T}_{\vec{\gamma},F}} \weight(P) = n! k^n \sum_{T\in \SSYT_k(\vec{\gamma},F)} \rho(F).
\]
\end{proposition}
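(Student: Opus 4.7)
The plan is to show that the map $S: \mathcal{T}_{\vec{\gamma},F} \to \SSYT_k(\vec{\gamma},F)$, which sends a fixed point $P$ to its underlying coloring tableau, is a surjection whose fibers all have cardinality $n! k^n$ and are constant-weight on $\rho(T)$. This immediately gives the desired identity.

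First I would verify the weight computation: for any $P\in \mathcal{T}_{\vec{\gamma},F}$, we have $\weight(P)=\rho(S(P))$. Indeed, fixed points of $\psi$ carry no South steps in their rim hooks (otherwise the scanning in $\psi$ would have located a Case (a) cell), so $(-1)^{d(T)}=1$. Moreover, fixed points of $\psi'$ have every cell of $\gamma^r$ colored by some $f_{s_i}$ with $p(f_{s_i})\equiv r\pmod k$, so each local factor $u_{r_i}^{p(f_{s_i})-b_i}$ in the definition of $\weight(P)$ equals $1$. Hence $\weight(P)$ collapses to $\prod_i \rho(f_{s_i}) = \rho(S(P))$.

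Next I would construct the fiber $S^{-1}(T)$ explicitly for each $T \in \SSYT_k(\vec{\gamma},F)$. Fix a reading order on the cells of $\vec{\gamma}$ (say, top-to-bottom then left-to-right inside each $\gamma^r$, and then $\gamma^0,\gamma^1,\ldots,\gamma^{k-1}$ in order). Given any $\sigma = u_{a_1}\sigma_1\cdots u_{a_n}\sigma_n \in Z_k\wr S_n$, write the pairs $u_{a_i}\sigma_i$ into the cells of $T$ in this order. To equip the resulting tableau with rim hooks, the color rule forces every rim hook to be monochromatic in the colors of $T$; and the ``no South step'' condition together with the condition that no two vertically adjacent cells share a color forces each rim hook to be a horizontal strip lying entirely inside one row and one color-class. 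Inside each maximal same-color run of a row, the convention from Remark~\ref{remark:decreasing_cycle_bijection} (cut before every left-to-right minimum) yields a unique way to partition the run into rim hooks so that the resulting cycles appear in the decreasing cycle order dictated by step (3) of the construction of $\mathcal{P}_{\vec{\gamma},F}$. This produces a well-defined element of $\mathcal{P}_{\vec{\gamma},F}$, which one checks is a fixed point of both $\psi$ (no South steps; no vertically-stacked same-color pair) and $\psi'$ ($p(f)\equiv r\bmod k$ for colors in $\gamma^r$, since $T\in \SSYT_k$).

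The main obstacle is verifying that the construction above is a genuine bijection between $Z_k\wr S_n$ and $S^{-1}(T)$. Injectivity is clear, since $\sigma$ can be recovered by reading the entries of $P$ in the same canonical cell order. For surjectivity, given $P\in S^{-1}(T)$, define $\sigma$ by this readout; one must check that the rim hook decomposition of $P$ is forced to be the one produced by the decreasing-cycle rule above. This follows because any rim hook of $P$ must be same-colored and horizontal (as above), and the ordering of cycles required in step (3) of the construction of $\mathcal{P}_{\vec{\gamma},F}$, combined with the property that every cycle begins with its smallest index, leaves no freedom other than cutting before left-to-right minima within each same-color run. Combining the bijection with the weight calculation gives
\[
\sum_{P\in \mathcal{T}_{\vec{\gamma},F}} \weight(P) \;=\; \sum_{T\in \SSYT_k(\vec{\gamma},F)} |S^{-1}(T)|\,\rho(T) \;=\; n!\,k^n \sum_{T\in \SSYT_k(\vec{\gamma},F)} \rho(T),
\]
as claimed.
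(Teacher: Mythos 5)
Your proof is correct and follows the paper's own argument essentially verbatim: both compute $\weight(P)=\rho(S(P))$ on fixed points (no South steps kills the sign, $p(f)\equiv r\bmod k$ kills the roots of unity), and both construct the fiber $S^{-1}(T)$ by placing an arbitrary $\sigma\in Z_k\wr S_n$ into the cells of $T$ in a fixed reading order and recovering the rim hook decomposition from the left-to-right-minima cuts of Remark~\ref{remark:decreasing_cycle_bijection}. You spell out the injectivity and surjectivity of this $Z_k\wr S_n\leftrightarrow S^{-1}(T)$ bijection a bit more explicitly than the paper does, but the underlying idea and all key steps are the same.
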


Theorem \ref{theorem:mainresult} then follows.

\section{Acknowledgments}
The first author would like to thank Ilse Fischer and Bal\'{a}zs Szendr\H{o}i for their supervision and suggesting research directions. The first author would also like to acknowledge the support given by his employment at the University of Vienna.

The second author would like to thank, first and foremost, Jeffrey B. Remmel, who initially suggested applying the methods in \cite{MendesRomero} to the case in Subsection~\ref{section:wreath_defining_representation} in 2014.
It would have been great to share with him this refined approach, but we are grateful that he was able to see the results in Subsection~\ref{section:wreath_defining_representation} around that time.
The second author was supported by ERC grant “Refined invariants in combinatorics, low-dimensional topology
and geometry of moduli spaces,” No. 101001159.

Both authors would like to thank the organizers of the Alpine Algebraic Geometry Workshop at Obergurgl, 2024, where this collaboration started. We would also like to thank Bal\'{a}zs Szendr\H{o}i for his many helpful comments and for hinting to us the statement and proof of point 2 in Proposition \ref{proposition:regular-sequence}.

\bibliographystyle{amsalpha}
\bibliography{main}

\end{document}